\def\diagram{\m@th\leftwidth=\z@ \rightwidth=\z@ \topheight=\z@
\botheight=\z@ \setbox\@picbox\hbox\bgroup}
\def\enddiagram{\egroup\wd\@picbox\rightwidth\unitlength
\ht\@picbox\topheight\unitlength \dp\@picbox\botheight\unitlength
\hskip\leftwidth\unitlength\box\@picbox}
\def\bfig{\begin{diagram}}
\def\efig{\end{diagram}}
\def\ratchet#1#2{\ifnum#1<#2 \global #1=#2 \fi}
\def\putbox(#1,#2)#3{%
\horsize{\wideness}{#3} \divide\wideness by 2
{\advance\wideness by #1 \ratchet{\rightwidth}{\wideness}}
{\advance\wideness by -#1 \ratchet{\leftwidth}{\wideness}}
\vertsize{\highness}{#3} \divide\highness by 2
{\advance\highness by #2 \ratchet{\topheight}{\highness}}
{\advance\highness by -#2 \ratchet{\botheight}{\highness}}
\put(#1,#2){\makebox(0,0){$#3$}}}
\def\putlbox(#1,#2)#3{%
\horsize{\wideness}{#3}
{\advance\wideness by #1 \ratchet{\rightwidth}{\wideness}}
{\ratchet{\leftwidth}{-#1}}
\vertsize{\highness}{#3} \divide\highness by 2
{\advance\highness by #2 \ratchet{\topheight}{\highness}}
{\advance\highness by -#2 \ratchet{\botheight}{\highness}}
\put(#1,#2){\makebox(0,0)[l]{$#3$}}}
\def\putrbox(#1,#2)#3{%
\horsize{\wideness}{#3}
{\ratchet{\rightwidth}{#1}}
{\advance\wideness by -#1 \ratchet{\leftwidth}{\wideness}}
\vertsize{\highness}{#3} \divide\highness by 2
{\advance\highness by #2 \ratchet{\topheight}{\highness}}
{\advance\highness by -#2 \ratchet{\botheight}{\highness}}
\put(#1,#2){\makebox(0,0)[r]{$#3$}}}
\def\adjust[#1]{} % For compatibility
\newdimen\tempdimen
\newdimen\xlen
\newdimen\ylen
\newsavebox{\tempboxa}%
\newsavebox{\tempboxb}%
\newsavebox{\tempboxc}%
\newdimen\w@dth
\def\setw@dth#1#2{\setbox\z@\hbox{\m@th$#1$}\w@dth=\wd\z@
\setbox\@ne\hbox{\m@th$#2$}\ifnum\w@dth<\wd\@ne \w@dth=\wd\@ne \fi
\advance\w@dth by 1.2em}
\def\t@^#1_#2{\allowbreak\def\n@one{#1}\def\n@two{#2}\mathrel
{\setw@dth{#1}{#2}
\mathop{\hbox to \w@dth{\rightarrowfill}}\limits
\ifx\n@one\empty\else ^{\box\z@}\fi
\ifx\n@two\empty\else _{\box\@ne}\fi}}
\def\t@@^#1{\@ifnextchar_{\t@^{#1}}{\t@^{#1}_{}}}
\def\to{\@ifnextchar^{\t@@}{\t@@^{}}}
\def\t@left^#1_#2{\def\n@one{#1}\def\n@two{#2}\mathrel{\setw@dth{#1}{#2}
\mathop{\hbox to \w@dth{\leftarrowfill}}\limits
\ifx\n@one\empty\else ^{\box\z@}\fi
\ifx\n@two\empty\else _{\box\@ne}\fi}}
\def\t@@left^#1{\@ifnextchar_{\t@left^{#1}}{\t@left^{#1}_{}}}
\def\toleft{\@ifnextchar^{\t@@left}{\t@@left^{}}}
\def\two@^#1_#2{\allowbreak
\def\n@one{#1}\def\n@two{#2}\mathrel{\setw@dth{#1}{#2}
\mathop{\vcenter{\lineskip\z@\baselineskip\z@
                 \hbox to \w@dth{\rightarrowfill}%
                 \hbox to \w@dth{\rightarrowfill}}%
       }\limits
\ifx\n@one\empty\else ^{\box\z@}\fi
\ifx\n@two\empty\else _{\box\@ne}\fi}}
\def\tw@@^#1{\@ifnextchar _{\two@^{#1}}{\two@^{#1}_{}}}
\def\two{\@ifnextchar ^{\tw@@}{\tw@@^{}}}
\def\tofr@^#1_#2{\def\n@one{#1}\def\n@two{#2}\mathrel{\setw@dth{#1}{#2}
\mathop{\vcenter{\hbox to \w@dth{\rightarrowfill}\kern-1.7ex
                 \hbox to \w@dth{\leftarrowfill}}%
       }\limits
\ifx\n@one\empty\else ^{\box\z@}\fi
\ifx\n@two\empty\else _{\box\@ne}\fi}}
\def\t@fr@^#1{\@ifnextchar_ {\tofr@^{#1}}{\tofr@^{#1}_{}}}
\def\tofro{\@ifnextchar^ {\t@fr@}{\t@fr@^{}}}
\def\mon{\mathop{\m@th\hbox to
      14.6\P@{\lasyb\char'51\hskip-2.1\P@$\arrext$\hss
$\mathord\rightarrow$}}\limits} % width of \epi
\def\leftmono{\mathrel{\m@th\hbox to
14.6\P@{$\mathord\leftarrow$\hss$\arrext$\hskip-2.1\P@\lasyb\char'50%
}}\limits} % width of \epi
\mathchardef\arrext="0200       % amr minus for arrow extension (see \into)
\def\settypes(#1,#2,#3){\arrowtypea#1 \arrowtypeb#2 \arrowtypec#3}
\def\settoheight#1#2{\setbox\@tempboxa\hbox{#2}#1\ht\@tempboxa\relax}%
\def\settodepth#1#2{\setbox\@tempboxa\hbox{#2}#1\dp\@tempboxa\relax}%
\def\settokens`#1`#2`#3`#4`{%
     \def\tokena{#1}\def\tokenb{#2}\def\tokenc{#3}\def\tokend{#4}}
\def\setsqparms[#1`#2`#3`#4;#5`#6]{%
\arrowtypea #1
\arrowtypeb #2
\arrowtypec #3
\arrowtyped #4
\width #5
\height #6
}
\def\setpos(#1,#2){\xpos=#1 \ypos#2}
\def\settriparms[#1`#2`#3;#4]{\settripairparms[#1`#2`#3`1`1;#4]}%
\def\settripairparms[#1`#2`#3`#4`#5;#6]{%
\arrowtypea #1
\arrowtypeb #2
\arrowtypec #3
\arrowtyped #4
\arrowtypee #5
\width #6
\height #6
}
\def\resetparms{\settripairparms[1`1`1`1`1;500]\width 500}%default values%
\def\mvector(#1,#2)#3{%%
\put(0,0){\vector(#1,#2){#3}}%
\put(0,0){\vector(#1,#2){26}}%
}
\def\evector(#1,#2)#3{{%%
\arrowlength #3
\put(0,0){\vector(#1,#2){\arrowlength}}%
\advance \arrowlength by-30
\put(0,0){\vector(#1,#2){\arrowlength}}%
}}
\def\horsize#1#2{%
\settowidth{\tempdimen}{$#2$}%
#1=\tempdimen
\divide #1 by\unitlength
}
\def\vertsize#1#2{%
\settoheight{\tempdimen}{$#2$}%
#1=\tempdimen
\settodepth{\tempdimen}{$#2$}%
\advance #1 by\tempdimen
\divide #1 by\unitlength
}
\def\putvector(#1,#2)(#3,#4)#5#6{{%
\ifnum3<\arrowtype
\putdashvector(#1,#2)(#3,#4)#5\arrowtype
\else
\ifnum\arrowtype<-3
\putdashvector(#1,#2)(#3,#4)#5\arrowtype
\else
\xpos=#1
\ypos=#2
\run=#3
\rise=#4
\arrowlength=#5
\ifnum \arrowtype<0
    \ifnum \run=0
        \advance \ypos by-\arrowlength
    \else
        \tempcounta \arrowlength
        \multiply \tempcounta by\rise
        \divide \tempcounta by\run
        \ifnum\run>0
            \advance \xpos by\arrowlength
            \advance \ypos by\tempcounta
        \else
            \advance \xpos by-\arrowlength
            \advance \ypos by-\tempcounta
        \fi
    \fi
    \multiply \arrowtype by-1
    \multiply \rise by-1
    \multiply \run by-1
\fi
\ifcase \arrowtype
\or \put(\xpos,\ypos){\vector(\run,\rise){\arrowlength}}%
\or \put(\xpos,\ypos){\mvector(\run,\rise)\arrowlength}%
\or \put(\xpos,\ypos){\evector(\run,\rise){\arrowlength}}%
\fi\fi\fi
}}
\def\putsplitvector(#1,#2)#3#4{%%
\xpos #1
\ypos #2
\arrowtype #4
\halflength #3
\arrowlength #3
\gap 140
\advance \halflength by-\gap
\divide \halflength by2
\ifnum\arrowtype>0
   \ifcase \arrowtype
   \or \put(\xpos,\ypos){\line(0,-1){\halflength}}%
       \advance\ypos by-\halflength
       \advance\ypos by-\gap
       \put(\xpos,\ypos){\vector(0,-1){\halflength}}%
   \or \put(\xpos,\ypos){\line(0,-1)\halflength}%
       \put(\xpos,\ypos){\vector(0,-1)3}%
       \advance\ypos by-\halflength
       \advance\ypos by-\gap
       \put(\xpos,\ypos){\vector(0,-1){\halflength}}%
   \or \put(\xpos,\ypos){\line(0,-1)\halflength}%
       \advance\ypos by-\halflength
       \advance\ypos by-\gap
       \put(\xpos,\ypos){\evector(0,-1){\halflength}}%
   \fi
\else \arrowtype=-\arrowtype
   \ifcase\arrowtype
   \or \advance \ypos by-\arrowlength
       \put(\xpos,\ypos){\line(0,1){\halflength}}%
       \advance\ypos by\halflength
       \advance\ypos by\gap
       \put(\xpos,\ypos){\vector(0,1){\halflength}}%
   \or \advance \ypos by-\arrowlength
       \put(\xpos,\ypos){\line(0,1)\halflength}%
       \put(\xpos,\ypos){\vector(0,1)3}%
       \advance\ypos by\halflength
       \advance\ypos by\gap
       \put(\xpos,\ypos){\vector(0,1){\halflength}}%
   \or \advance \ypos by-\arrowlength
       \put(\xpos,\ypos){\line(0,1)\halflength}%
       \advance\ypos by\halflength
       \advance\ypos by\gap
       \put(\xpos,\ypos){\evector(0,1){\halflength}}%
   \fi
\fi
}
\def\putmorphism(#1)(#2,#3)[#4`#5`#6]#7#8#9{{%
\run #2
\rise #3
\ifnum\rise=0
  \puthmorphism(#1)[#4`#5`#6]{#7}{#8}#9%
\else\ifnum\run=0
  \putvmorphism(#1)[#4`#5`#6]{#7}{#8}#9%
\else
\setpos(#1)%
\arrowlength #7
\arrowtype #8
\ifnum\run=0
\else\ifnum\rise=0
\else
\ifnum\run>0
    \coefa=1
\else
   \coefa=-1
\fi
\ifnum\arrowtype>0
   \coefb=0
   \coefc=-1
\else
   \coefb=\coefa
   \coefc=1
   \arrowtype=-\arrowtype
\fi
\width=2
\multiply \width by\run
\divide \width by\rise
\ifnum \width<0  \width=-\width\fi
\advance\width by60
\if l#9 \width=-\width\fi
\putbox(\xpos,\ypos){#4}%            %node 1
{\multiply \coefa by\arrowlength%      %node 2
\advance\xpos by\coefa
\multiply \coefa by\rise
\divide \coefa by\run
\advance \ypos by\coefa
\putbox(\xpos,\ypos){#5} }%
{\multiply \coefa by\arrowlength%      %label
\divide \coefa by2
\advance \xpos by\coefa
\advance \xpos by\width
\multiply \coefa by\rise
\divide \coefa by\run
\advance \ypos by\coefa
\if l#9%
   \putrbox(\xpos,\ypos){#6}%
\else\if r#9%
   \putlbox(\xpos,\ypos){#6}%
\fi\fi }%
{\multiply \rise by-\coefc%             %arrow
\multiply \run by-\coefc
\multiply \coefb by\arrowlength
\advance \xpos by\coefb
\multiply \coefb by\rise
\divide \coefb by\run
\advance \ypos by\coefb
\multiply \coefc by70
\advance \ypos by\coefc
\multiply \coefc by\run
\divide \coefc by\rise
\advance \xpos by\coefc
\multiply \coefa by140
\multiply \coefa by\run
\divide \coefa by\rise
\advance \arrowlength by\coefa
\ifcase\arrowtype
\or \put(\xpos,\ypos){\vector(\run,\rise){\arrowlength}}%
\or \put(\xpos,\ypos){\mvector(\run,\rise){\arrowlength}}%
\or \put(\xpos,\ypos){\evector(\run,\rise){\arrowlength}}%
\fi}\fi\fi\fi\fi}}
\def\howmanydashes{% Actually returns both number and length
\numbdashes=\arrowlength \lengthdash=40
\divide\numbdashes by \lengthdash
\lengthdash=\arrowlength
\divide\lengthdash by \numbdashes
%This futzing around is to minimize round-off error.
\increment=\lengthdash
\multiply\lengthdash by 3
\divide\lengthdash by 5
}
\def\putdashvector(#1)(#2,#3)#4#5{%
\ifnum#3=0 \putdashhvector(#1){#4}#5
\else
\ifnum#2=0
\putdashvvector(#1){#4}#5\fi\fi}
\def\putdashhvector(#1,#2)#3#4{{%
\arrowlength=#3 \howmanydashes
\multiput(#1,#2)(\increment,0){\numbdashes}%
{\vrule height .4pt width \lengthdash\unitlength}
\arrowtype=#4 \xpos=#1
\ifnum\arrowtype<0 \advance\arrowtype by 7 \fi
\ifcase\arrowtype
\or \advance\xpos by 10
    \put(\xpos,#2){\vector(-1,0){\lengthdash}}
    \advance\xpos by 40
    \put(\xpos,#2){\vector(-1,0){\lengthdash}}
\or \advance \xpos by 10
    \put(\xpos,#2){\vector(-1,0){\lengthdash}}
    \advance\xpos by  \arrowlength
    \advance\xpos by  -50
    \put(\xpos,#2){\vector(-1,0){\lengthdash}}
\or \advance\xpos by 10
    \put(\xpos,#2){\vector(-1,0){\lengthdash}}
\or \advance\xpos by \arrowlength
    \advance\xpos by -\lengthdash
    \put(\xpos,#2){\vector(1,0){\lengthdash}}
\or {\advance\xpos by 10
    \put(\xpos,#2){\vector(1,0){\lengthdash}}}
    \advance\xpos by \arrowlength
    \advance\xpos by -\lengthdash
    \put(\xpos,#2){\vector(1,0){\lengthdash}}
\or \advance\xpos by \arrowlength
    \advance\xpos by -\lengthdash
    \put(\xpos,#2){\vector(1,0){\lengthdash}}
    \advance\xpos by -40
    \put(\xpos,#2){\vector(1,0){\lengthdash}}
   \fi
}}
\def\putdashvvector(#1,#2)#3#4{{%
\arrowlength=#3 \howmanydashes
\ypos=#2 \advance\ypos by -\arrowlength
\multiput(#1,#2)(0,\increment){\numbdashes}%
    {\vrule width .4pt height \lengthdash\unitlength}
\arrowtype=#4 \ypos=#2
\ifnum\arrowtype<0 \advance\arrowtype by 7 \fi
\ifcase\arrowtype
\or \advance\ypos by \arrowlength \advance\ypos by -40
    \put(#1,\ypos){\vector(0,1){\lengthdash}}
    \advance\ypos by -40
    \put(#1,\ypos){\vector(0,1){\lengthdash}}
\or \advance\ypos by 10
    \put(#1,\ypos){\vector(0,1){\lengthdash}}
    \advance\ypos by \arrowlength \advance\ypos by -40
    \put(#1,\ypos){\vector(0,1){\lengthdash}}
\or \advance\ypos by \arrowlength \advance\ypos by -40
    \put(#1,\ypos){\vector(0,1){\lengthdash}}
\or \advance\ypos by 10
    \put(#1,\ypos){\vector(0,-1){\lengthdash}}
\or \advance\ypos by 10
    \put(#1,\ypos){\vector(0,-1){\lengthdash}}
    \advance\ypos by \arrowlength \advance\ypos by -40
    \put(#1,\ypos){\vector(0,-1){\lengthdash}}
\or \advance\ypos by 10
    \put(#1,\ypos){\vector(0,-1){\lengthdash}}
    \advance\ypos by 40
    \put(#1,\ypos){\vector(0,-1){\lengthdash}}
\fi
}}
\def\puthmorphism(#1,#2)[#3`#4`#5]#6#7#8{{%
\xpos #1
\ypos #2
\width #6
\arrowlength #6
\arrowtype=#7
\putbox(\xpos,\ypos){#3\vphantom{#4}}%
{\advance \xpos by\arrowlength
\putbox(\xpos,\ypos){\vphantom{#3}#4}}%
\horsize{\tempcounta}{#3}%
\horsize{\tempcountb}{#4}%
\divide \tempcounta by2
\divide \tempcountb by2
\advance \tempcounta by30
\advance \tempcountb by30
\advance \xpos by\tempcounta
\advance \arrowlength by-\tempcounta
\advance \arrowlength by-\tempcountb
\putvector(\xpos,\ypos)(1,0)\arrowlength\arrowtype
\divide \arrowlength by2
\advance \xpos by\arrowlength
\vertsize{\tempcounta}{#5}%
\divide\tempcounta by2
\advance \tempcounta by20
\if a#8 %
   \advance \ypos by\tempcounta
   \putbox(\xpos,\ypos){#5}%
\else
   \advance \ypos by-\tempcounta
   \putbox(\xpos,\ypos){#5}%
\fi}}
\def\putvmorphism(#1,#2)[#3`#4`#5]#6#7#8{{%
\xpos #1
\ypos #2
\arrowlength #6
\arrowtype #7
\settowidth{\xlen}{$#5$}%
\putbox(\xpos,\ypos){#3}%
{\advance \ypos by-\arrowlength
\putbox(\xpos,\ypos){#4}}%
{\advance\arrowlength by-140
\advance \ypos by-70
\ifdim\xlen>0pt
   \if m#8%
      \putsplitvector(\xpos,\ypos)\arrowlength\arrowtype
   \else
   \putvector(\xpos,\ypos)(0,-1)\arrowlength\arrowtype
   \fi
\else
   \putvector(\xpos,\ypos)(0,-1)\arrowlength\arrowtype
\fi}%
\ifdim\xlen>0pt
   \divide \arrowlength by2
   \advance\ypos by-\arrowlength
   \if l#8%
      \advance \xpos by-40
      \putrbox(\xpos,\ypos){#5}%
   \else\if r#8%
      \advance \xpos by40
      \putlbox(\xpos,\ypos){#5}%
   \else
      \putbox(\xpos,\ypos){#5}%
   \fi\fi
\fi
}}
\def\putsquarep<#1>(#2)[#3;#4`#5`#6`#7]{{%
\setsqparms[#1]%
\setpos(#2)%
\settokens`#3`%
\puthmorphism(\xpos,\ypos)[\tokenc`\tokend`{#7}]{\width}{\arrowtyped}b%
\advance\ypos by \height
\puthmorphism(\xpos,\ypos)[\tokena`\tokenb`{#4}]{\width}{\arrowtypea}a%
\putvmorphism(\xpos,\ypos)[``{#5}]{\height}{\arrowtypeb}l%
\advance\xpos by \width
\putvmorphism(\xpos,\ypos)[``{#6}]{\height}{\arrowtypec}r%
}}
\def\putsquare{\@ifnextchar <{\putsquarep}{\putsquarep%
   <\arrowtypea`\arrowtypeb`\arrowtypec`\arrowtyped;\width`\height>}}
\def\square{\@ifnextchar< {\squarep}{\squarep
   <\arrowtypea`\arrowtypeb`\arrowtypec`\arrowtyped;\width`\height>}}
\def\squarep<#1>[#2`#3`#4`#5;#6`#7`#8`#9]{{%       %     #2------>#3
\setsqparms[#1]%                                   %      |       |
\diagram%                                          %      |       |
\putsquarep<\arrowtypea`\arrowtypeb`\arrowtypec`%  %    #7|       |#8
\arrowtyped;\width`\height>%                       %      |       |
(0,0)[#2`#3`#4`{#5};#6`#7`#8`{#9}]%                %      |       |
\enddiagram%                                       %      v       v
}}                                                 %     #4------>#5
\def\putptrianglep<#1>(#2,#3)[#4`#5`#6;#7`#8`#9]{{%
\settriparms[#1]%
\xpos=#2 \ypos=#3
\advance\ypos by \height
\puthmorphism(\xpos,\ypos)[#4`#5`{#7}]{\height}{\arrowtypea}a%
\putvmorphism(\xpos,\ypos)[`#6`{#8}]{\height}{\arrowtypeb}l%
\advance\xpos by\height
\putmorphism(\xpos,\ypos)(-1,-1)[``{#9}]{\height}{\arrowtypec}r%
}}
\def\putptriangle{\@ifnextchar <{\putptrianglep}{\putptrianglep
   <\arrowtypea`\arrowtypeb`\arrowtypec;\height>}}
\def\ptriangle{\@ifnextchar <{\ptrianglep}{\ptrianglep
   <\arrowtypea`\arrowtypeb`\arrowtypec;\height>}}
\def\ptrianglep<#1>[#2`#3`#4;#5`#6`#7]{{%%    %      #2----->#3
\settriparms[#1]%                             %      |      /
\diagram%                                     %      |     /
\putptrianglep<\arrowtypea`\arrowtypeb`%      %    #6|    /#7
\arrowtypec;\height>%                         %      |   /
(0,0)[#2`#3`#4;#5`#6`{#7}]%                   %      |  /
\enddiagram%%                                 %      v v
}}                                            %      #4
\def\putqtrianglep<#1>(#2,#3)[#4`#5`#6;#7`#8`#9]{{%
\settriparms[#1]%
\xpos=#2 \ypos=#3
\advance\ypos by\height
\puthmorphism(\xpos,\ypos)[#4`#5`{#7}]{\height}{\arrowtypea}a%
\putmorphism(\xpos,\ypos)(1,-1)[``{#8}]{\height}{\arrowtypeb}l%
\advance\xpos by\height
\putvmorphism(\xpos,\ypos)[`#6`{#9}]{\height}{\arrowtypec}r%
}}
\def\putqtriangle{\@ifnextchar <{\putqtrianglep}{\putqtrianglep
   <\arrowtypea`\arrowtypeb`\arrowtypec;\height>}}
\def\qtriangle{\@ifnextchar <{\qtrianglep}{\qtrianglep
   <\arrowtypea`\arrowtypeb`\arrowtypec;\height>}}
\def\qtrianglep<#1>[#2`#3`#4;#5`#6`#7]{{%%    %        #2----->#3
\settriparms[#1]%                             %         \      |
\width=\height                                %          \     |
\diagram%                                     %         #6\    |#7
\putqtrianglep<\arrowtypea`\arrowtypeb`%      %            \   |
\arrowtypec;\height>%                         %             \  |
(0,0)[#2`#3`#4;#5`#6`{#7}]%                   %              v v
\enddiagram%%                                 %               #4
}}
\def\putdtrianglep<#1>(#2,#3)[#4`#5`#6;#7`#8`#9]{{%
\settriparms[#1]%
\xpos=#2 \ypos=#3
\puthmorphism(\xpos,\ypos)[#5`#6`{#9}]{\height}{\arrowtypec}b%
\advance\xpos by \height \advance\ypos by\height
\putmorphism(\xpos,\ypos)(-1,-1)[``{#7}]{\height}{\arrowtypea}l%
\putvmorphism(\xpos,\ypos)[#4``{#8}]{\height}{\arrowtypeb}r%
}}
\def\putdtriangle{\@ifnextchar <{\putdtrianglep}{\putdtrianglep
   <\arrowtypea`\arrowtypeb`\arrowtypec;\height>}}
\def\dtriangle{\@ifnextchar <{\dtrianglep}{\dtrianglep
   <\arrowtypea`\arrowtypeb`\arrowtypec;\height>}}
\def\dtrianglep<#1>[#2`#3`#4;#5`#6`#7]{{%%    %                  / |
\settriparms[#1]%                             %                 /  |
\width=\height                                %              #5/   |#6
\diagram%                                     %               /    |
\putdtrianglep<\arrowtypea`\arrowtypeb`%      %              /     |
\arrowtypec;\height>%                         %             v      v
(0,0)[#2`#3`#4;#5`#6`{#7}]%                   %            #3----->#4
\enddiagram%%                                 %                #7
}}
\def\putbtrianglep<#1>(#2,#3)[#4`#5`#6;#7`#8`#9]{{%
\settriparms[#1]%
\xpos=#2 \ypos=#3
\puthmorphism(\xpos,\ypos)[#5`#6`{#9}]{\height}{\arrowtypec}b%
\advance\ypos by\height
\putmorphism(\xpos,\ypos)(1,-1)[``{#8}]{\height}{\arrowtypeb}r%
\putvmorphism(\xpos,\ypos)[#4``{#7}]{\height}{\arrowtypea}l%
}}
\def\putbtriangle{\@ifnextchar <{\putbtrianglep}{\putbtrianglep
   <\arrowtypea`\arrowtypeb`\arrowtypec;\height>}}
\def\btriangle{\@ifnextchar <{\btrianglep}{\btrianglep
   <\arrowtypea`\arrowtypeb`\arrowtypec;\height>}}
\def\btrianglep<#1>[#2`#3`#4;#5`#6`#7]{{%%   %              | \
\settriparms[#1]%                            %              |  \
\width=\height                               %            #5|   \#6
\diagram%                                    %              |    \
\putbtrianglep<\arrowtypea`\arrowtypeb`%     %              |     \
\arrowtypec;\height>%                        %              v      v
(0,0)[#2`#3`#4;#5`#6`{#7}]%                  %              #3----->#4
\enddiagram%%                                %                 #7
}}
\def\putAtrianglep<#1>(#2,#3)[#4`#5`#6;#7`#8`#9]{{%
\settriparms[#1]%
\xpos=#2 \ypos=#3
{\multiply \height by2
\puthmorphism(\xpos,\ypos)[#5`#6`{#9}]{\height}{\arrowtypec}b}%
\advance\xpos by\height \advance\ypos by\height
\putmorphism(\xpos,\ypos)(-1,-1)[#4``{#7}]{\height}{\arrowtypea}l%
\putmorphism(\xpos,\ypos)(1,-1)[``{#8}]{\height}{\arrowtypeb}r%
}}
\def\putAtriangle{\@ifnextchar <{\putAtrianglep}{\putAtrianglep
   <\arrowtypea`\arrowtypeb`\arrowtypec;\height>}}
\def\Atriangle{\@ifnextchar <{\Atrianglep}{\Atrianglep
   <\arrowtypea`\arrowtypeb`\arrowtypec;\height>}}
\def\Atrianglep<#1>[#2`#3`#4;#5`#6`#7]{{%%         %         /   \
\settriparms[#1]%                                  %        /     \
\width=\height                                     %     #5/       \#6
\diagram%                                          %      /         \
\putAtrianglep<\arrowtypea`\arrowtypeb`%           %     /           \
\arrowtypec;\height>%                              %    v             v
(0,0)[#2`#3`#4;#5`#6`{#7}]%                        %   #3------------>#4
\enddiagram%%                                      %          #7
}}
\def\putAtrianglepairp<#1>(#2)[#3;#4`#5`#6`#7`#8]{{%
\settripairparms[#1]%
\setpos(#2)%
\settokens`#3`%
\puthmorphism(\xpos,\ypos)[\tokenb`\tokenc`{#7}]{\height}{\arrowtyped}b%
\advance\xpos by\height
\puthmorphism(\xpos,\ypos)[\phantom{\tokenc}`\tokend`{#8}]%
{\height}{\arrowtypee}b%
\advance\ypos by\height
\putmorphism(\xpos,\ypos)(-1,-1)[\tokena``{#4}]{\height}{\arrowtypea}l%
\putvmorphism(\xpos,\ypos)[``{#5}]{\height}{\arrowtypeb}m%
\putmorphism(\xpos,\ypos)(1,-1)[``{#6}]{\height}{\arrowtypec}r%
}}
\def\putAtrianglepair{\@ifnextchar <{\putAtrianglepairp}{\putAtrianglepairp%
   <\arrowtypea`\arrowtypeb`\arrowtypec`\arrowtyped`\arrowtypee;\height>}}
\def\Atrianglepair{\@ifnextchar <{\Atrianglepairp}{\Atrianglepairp%
   <\arrowtypea`\arrowtypeb`\arrowtypec`\arrowtyped`\arrowtypee;\height>}}
\def\Atrianglepairp<#1>[#2;#3`#4`#5`#6`#7]{{%           %  #2a
\settripairparms[#1]%                         %           / | \
\settokens`#2`%                               %          /  |  \
\width=\height                                %       #3/  #4   \#5
\diagram%                                     %        /    |    \
\putAtrianglepairp                            %       /     |     \
<\arrowtypea`\arrowtypeb`\arrowtypec`%        %      v      v      v
\arrowtyped`\arrowtypee;\height>%             %     #2b---->#2c---->#2d
(0,0)[{#2};#3`#4`#5`#6`{#7}]%                 %         #6     #7
\enddiagram%%
}}
\def\putVtrianglep<#1>(#2,#3)[#4`#5`#6;#7`#8`#9]{{%
\settriparms[#1]%
\xpos=#2 \ypos=#3
\advance\ypos by\height
{\multiply\height by2
\puthmorphism(\xpos,\ypos)[#4`#5`{#7}]{\height}{\arrowtypea}a}%
\putmorphism(\xpos,\ypos)(1,-1)[`#6`{#8}]{\height}{\arrowtypeb}l%
\advance\xpos by\height
\advance\xpos by\height
\putmorphism(\xpos,\ypos)(-1,-1)[``{#9}]{\height}{\arrowtypec}r%
}}
\def\putVtriangle{\@ifnextchar <{\putVtrianglep}{\putVtrianglep
   <\arrowtypea`\arrowtypeb`\arrowtypec;\height>}}
\def\Vtriangle{\@ifnextchar <{\Vtrianglep}{\Vtrianglep
   <\arrowtypea`\arrowtypeb`\arrowtypec;\height>}}
\def\Vtrianglep<#1>[#2`#3`#4;#5`#6`#7]{{%%     %        #2------------->#3
\settriparms[#1]%                              %         \             /
\width=\height                                 %          \           /
\diagram%                                      %         #6\         /#7
\putVtrianglep<\arrowtypea`\arrowtypeb`%       %            \       /
\arrowtypec;\height>%                          %             \     /
(0,0)[#2`#3`#4;#5`#6`{#7}]%                    %              v   v
\enddiagram%%                                  %               #4
}}
\def\putVtrianglepairp<#1>(#2)[#3;#4`#5`#6`#7`#8]{{
\settripairparms[#1]%
\setpos(#2)%
\settokens`#3`%
\advance\ypos by\height
\putmorphism(\xpos,\ypos)(1,-1)[`\tokend`{#6}]{\height}{\arrowtypec}l%
\puthmorphism(\xpos,\ypos)[\tokena`\tokenb`{#4}]{\height}{\arrowtypea}a%
\advance\xpos by\height
\puthmorphism(\xpos,\ypos)[\phantom{\tokenb}`\tokenc`{#5}]%
{\height}{\arrowtypeb}a%
\putvmorphism(\xpos,\ypos)[``{#7}]{\height}{\arrowtyped}m%
\advance\xpos by\height
\putmorphism(\xpos,\ypos)(-1,-1)[``{#8}]{\height}{\arrowtypee}r%
}}
\def\putVtrianglepair{\@ifnextchar <{\putVtrianglepairp}{\putVtrianglepairp%
    <\arrowtypea`\arrowtypeb`\arrowtypec`\arrowtyped`\arrowtypee;\height>}}
\def\Vtrianglepair{\@ifnextchar <{\Vtrianglepairp}{\Vtrianglepairp%
    <\arrowtypea`\arrowtypeb`\arrowtypec`\arrowtyped`\arrowtypee;\height>}}
\def\Vtrianglepairp<#1>[#2;#3`#4`#5`#6`#7]{{%  %  #2a---->#2b---->#2c
\settripairparms[#1]%                          %   \      |      /
\settokens`#2`%                                %    \     |     /
\diagram%                                      %   #5\   #6    /#7
\putVtrianglepairp                             %      \   |   /
<\arrowtypea`\arrowtypeb`\arrowtypec`%         %       \  |  /
\arrowtyped`\arrowtypee;\height>%              %        v v v
(0,0)[{#2};#3`#4`#5`#6`{#7}]%                  %         #2d
\enddiagram%%
}}
\def\putCtrianglep<#1>(#2,#3)[#4`#5`#6;#7`#8`#9]{{%
\settriparms[#1]%
\xpos=#2 \ypos=#3
\advance\ypos by\height
\putmorphism(\xpos,\ypos)(1,-1)[``{#9}]{\height}{\arrowtypec}l%
\advance\xpos by\height
\advance\ypos by\height
\putmorphism(\xpos,\ypos)(-1,-1)[#4`#5`{#7}]{\height}{\arrowtypea}l%
{\multiply\height by 2
\putvmorphism(\xpos,\ypos)[`#6`{#8}]{\height}{\arrowtypeb}r}%
}}
\def\putCtriangle{\@ifnextchar <{\putCtrianglep}{\putCtrianglep
    <\arrowtypea`\arrowtypeb`\arrowtypec;\height>}}
\def\Ctriangle{\@ifnextchar <{\Ctrianglep}{\Ctrianglep
    <\arrowtypea`\arrowtypeb`\arrowtypec;\height>}}
\def\Ctrianglep<#1>[#2`#3`#4;#5`#6`#7]{{%%   %                / |
\settriparms[#1]%                            %             #5/  |
\width=\height                               %              /   |
\diagram%                                    %             v    |
\putCtrianglep<\arrowtypea`\arrowtypeb`%     %           #3     |#6
\arrowtypec;\height>%                        %             \    |
(0,0)[#2`#3`#4;#5`#6`{#7}]%                  %            #7\   |
\enddiagram%%                                %               \  |
}}                                           %                v v
\def\putDtrianglep<#1>(#2,#3)[#4`#5`#6;#7`#8`#9]{{%
\settriparms[#1]%
\xpos=#2 \ypos=#3
\advance\xpos by\height \advance\ypos by\height
\putmorphism(\xpos,\ypos)(-1,-1)[``{#9}]{\height}{\arrowtypec}r%
\advance\xpos by-\height \advance\ypos by\height
\putmorphism(\xpos,\ypos)(1,-1)[`#5`{#8}]{\height}{\arrowtypeb}r%
{\multiply\height by 2
\putvmorphism(\xpos,\ypos)[#4`#6`{#7}]{\height}{\arrowtypea}l}%
}}
\def\putDtriangle{\@ifnextchar <{\putDtrianglep}{\putDtrianglep
    <\arrowtypea`\arrowtypeb`\arrowtypec;\height>}}
\def\Dtriangle{\@ifnextchar <{\Dtrianglep}{\Dtrianglep
   <\arrowtypea`\arrowtypeb`\arrowtypec;\height>}}
\def\Dtrianglep<#1>[#2`#3`#4;#5`#6`#7]{{%%  %          | \
\settriparms[#1]%                           %          |  \#6
\width=\height                              %          |   \
\diagram%                                   %          |    v
\putDtrianglep<\arrowtypea`\arrowtypeb`%    %        #5|    #3
\arrowtypec;\height>%                       %          |    /
(0,0)[#2`#3`#4;#5`#6`{#7}]%                 %          |   /#7
\enddiagram%%                               %          |  /
}}                                          %          v v
\def\setrecparms[#1`#2]{\width=#1 \height=#2}%
\def\recursep<#1`#2>[#3;#4`#5`#6`#7`#8]{{\m@th
\width=#1 \height=#2
\settokens`#3`
\settowidth{\tempdimen}{$\tokena$}
\ifdim\tempdimen=0pt
  \savebox{\tempboxa}{\hbox{$\tokenb$}}%
  \savebox{\tempboxb}{\hbox{$\tokend$}}%
  \savebox{\tempboxc}{\hbox{$#6$}}%
\else
  \savebox{\tempboxa}{\hbox{$\hbox{$\tokena$}\times\hbox{$\tokenb$}$}}%
  \savebox{\tempboxb}{\hbox{$\hbox{$\tokena$}\times\hbox{$\tokend$}$}}%
  \savebox{\tempboxc}{\hbox{$\hbox{$\tokena$}\times\hbox{$#6$}$}}%
\fi
\ypos=\height
\divide\ypos by 2
\xpos=\ypos
\advance\xpos by \width
\bfig
\putCtrianglep<-1`1`1;\ypos>(0,0)[`\tokenc`;#5`#6`{#7}]%
\puthmorphism(\ypos,0)[\tokend`\usebox{\tempboxb}`{#8}]{\width}{-1}b%
\puthmorphism(\ypos,\height)[\tokenb`\usebox{\tempboxa}`{#4}]{\width}{-1}a%
\advance\ypos by \width
\putvmorphism(\ypos,\height)[``\usebox{\tempboxc}]{\height}1r%
\efig
}}
\def\recurse{\@ifnextchar <{\recursep}{\recursep<\width`\height>}}
\def\puttwohmorphisms(#1,#2)[#3`#4;#5`#6]#7#8#9{{%
% 1 and 2 are position, 3 and 4 are the nodes, 5 and 6 the labels,
% 7 the distance between node centers and 8 & 9 are the arrow types.
%         #5
% #3 ===========> #4
%         #6
%
\puthmorphism(#1,#2)[#3`#4`]{#7}0a
\ypos=#2
\advance\ypos by 20
\puthmorphism(#1,\ypos)[\phantom{#3}`\phantom{#4}`#5]{#7}{#8}a
\advance\ypos by -40
\puthmorphism(#1,\ypos)[\phantom{#3}`\phantom{#4}`#6]{#7}{#9}b
}}
\def\puttwovmorphisms(#1,#2)[#3`#4;#5`#6]#7#8#9{{%
% 1 and 2 are position, 3 and 4 are the nodes, 5 and 6 the labels,
% 7 the distance between node centers and 8 & 9 are the arrow types.
%
%              #3
%              ||
%              ||
%           #5 || #6
%              ||
%              ||
%              vv
%              #4
%
\putvmorphism(#1,#2)[#3`#4`]{#7}0a
\xpos=#1
\advance\xpos by -20
\putvmorphism(\xpos,#2)[\phantom{#3}`\phantom{#4}`#5]{#7}{#8}l
\advance\xpos by 40
\putvmorphism(\xpos,#2)[\phantom{#3}`\phantom{#4}`#6]{#7}{#9}r
}}
\def\puthcoequalizer(#1)[#2`#3`#4;#5`#6`#7]#8#9{{%
% #1 is (\xpos,\ypos), the next 6 are the nodes and arrow labels
% #8 is the distance between each pair of nodes and #9 is the pos of #7
% either a (above) or b (below)
%         #5            #7
% #2 ===========> #3 --------> #4
%         #6
%
\setpos(#1)%
\puttwohmorphisms(\xpos,\ypos)[#2`#3;#5`#6]{#8}11%
\advance\xpos by #8
\puthmorphism(\xpos,\ypos)[\phantom{#3}`#4`#7]{#8}1{#9}
}}
\def\putvcoequalizer(#1)[#2`#3`#4;#5`#6`#7]#8#9{{%
% #1 is (\xpos,\ypos), the next 6 are the nodes and arrow labels
% #8 is the distance between each pair of nodes and #9 is the pos of #7
% either l (left) or r (right)
%
%              #2
%              | |
%              | |
%           #5 | | #6
%              | |
%              | |
%              v v
%              #3
%               |
%               |
%            #7 |
%               |
%               v
%              #4
%
\setpos(#1)%
\puttwovmorphisms(\xpos,\ypos)[#2`#3;#5`#6]{#8}11%
\advance\ypos by -#8
\putvmorphism(\xpos,\ypos)[\phantom{#3}`#4`#7]{#8}1{#9}
}}
\def\putthreehmorphisms(#1)[#2`#3;#4`#5`#6]#7(#8)#9{{%
% Use: \putthreehmorphisms(xpos,ypos)[lnode`rnode;toplabel`midlabel%
% botlabel]{distance}(toparrowtype,midarrowtype,botarrowtype){position}
\setpos(#1) \settypes(#8)
\if a#9 %
     \vertsize{\tempcounta}{#5}%
     \vertsize{\tempcountb}{#6}%
     \ifnum \tempcounta<\tempcountb \tempcounta=\tempcountb \fi
\else
     \vertsize{\tempcounta}{#4}%
     \vertsize{\tempcountb}{#5}%
     \ifnum \tempcounta<\tempcountb \tempcounta=\tempcountb \fi
\fi
\advance \tempcounta by 60
\puthmorphism(\xpos,\ypos)[#2`#3`#5]{#7}{\arrowtypeb}{#9}
\advance\ypos by \tempcounta
\puthmorphism(\xpos,\ypos)[\phantom{#2}`\phantom{#3}`#4]{#7}{\arrowtypea}{#9}
\advance\ypos by -\tempcounta \advance\ypos by -\tempcounta
\puthmorphism(\xpos,\ypos)[\phantom{#2}`\phantom{#3}`#6]{#7}{\arrowtypec}{#9}
}}
\def\setarrowtoks[#1`#2`#3`#4`#5`#6]{%
\def\toka{#1}
\def\tokb{#2}
\def\tokc{#3}
\def\tokd{#4}
\def\toke{#5}
\def\tokf{#6}
}
\def\hex{\@ifnextchar <{\hexp}{\hexp<1000`400>}}
\def\hexp<#1`#2>[#3`#4`#5`#6`#7`#8;#9]{%
\setarrowtoks[#9]
\yext=#2 \advance \yext by #2
\xext=#1 \advance\xext by \yext
\bfig
\putCtriangle<-1`0`1;#2>(0,0)[`#5`;\tokb``\tokd]
\xext=#1 \yext=#2 \advance \yext by #2
\putsquare<1`0`0`1;\xext`\yext>(#2,0)[#3`#4`#7`#8;\toka```\tokf]
\advance \xext by #2
\putDtriangle<0`1`-1;#2>(\xext,0)[`#6`;`\tokc`\toke]
\efig
}
\newtheorem{thm}{Theorem}[section]
\newtheorem{cor}[thm]{Corollary}
\newtheorem{lma}[thm]{Lemma}
\newtheorem{prop}[thm]{Proposition}
\newtheorem{propdefn}[thm]{Proposition and Definition}
\theoremstyle{definition}
\newtheorem{defn}[thm]{Definition}
\newtheorem{defnrem}[thm]{Definition and Remark}
\newtheorem{eg}[thm]{Example}
\newtheorem{rem}[thm]{Remark}
\newtheorem{notatn}[thm]{Notation}
\theoremstyle{remark}
\DeclareMathOperator{\Bal}{Bal}
\DeclareMathOperator{\End}{End}
\DeclareMathOperator{\Hom}{Hom}
\newcommand{\Mod}{\mbox{\rm Mod}}
\DeclareMathOperator{\Mon}{Mon}
\DeclareMathOperator{\sRg}{sRg}
\DeclareMathOperator{\xsMod}{\mbox{-\,} sMod}
\DeclareMathOperator{\Rel}{Rel}
\DeclareMathOperator{\Set}{Set}
\DeclareMathOperator{\id}{{id}}
\newcommand{\tensor}{\otimes}
\newcommand{\iso}{\cong}
\newcommand{\x}{{\hbox{-}}}
\newcommand{\N}{{\mathbb N}}
\newcommand{\Pa}{{\mathcal P}}
\newcommand{\sst}{\scriptstyle }
\newcommand{\simz}{\!\sim}
\newcommand{\impl}{\Rightarrow}
\renewcommand{\sum}{{\Sigma}}
\title[Remarks on Semimodules]{Remarks on Semimodules}
\author{Bodo Pareigis}
\address{Mathematisches Institut\\Universit\"at M\"unchen\\ Theresienstra\ss e 39 \\80333 M\"unchen\\Germany}
\email{PAREIGIS@MATH.LMU.DE}
\author{Helmut R\"ohrl}
\address{9322 La Jolla Farms Rd.\\La Jolla, CA 92037\\USA}
\email{HROHRL@UCSD.EDU}
\subjclass[2010]{Primary: 16Y60, 20M12
; Secondary: 11P83, 05A17
}
\begin{document}

\date{\today}
\begin{abstract} This is a study of universal problems for semimodules, in particular coequalizers, coproducts, and tensor products. Furthermore the structure theory of semiideals of the semiring of natural numbers is extended.
\end{abstract}

\maketitle

%============================================================================
 \noindent{Key} Words: Semimodule, semiideal of $\mathbb Z^+$, congruence relation, coequalizer, tensor product.

  \section*{Introduction}  
 Our ongoing research and a recent question in a discussion group on the Internet \cite{JD} lead us to a more detailed study of semimodules which is presented here.

 Many of the recent publications on semimodules are devoted to transferring certain properties from modules to the more general case of semimodules. In this paper we want to show that semimodules can be unexpectedly different from modules with respect to certain properties.
 
  In this note we present congruence relations, coequalizers, coproducts, and colimits of semimodules, free semimodules, and tensor products of semimodules. We usually start with a universal problem and determine the inner algebraic structure of a universal solution, before we give explicit constructions and thus the proof of the existence. Most of these turn out to be quite different from what should be expected when considering the case of modules.

 One chapter is devoted to the study of semiideals in $\N_0$, the semiring of natural numbers (including 0). Theorem \ref{period} implies immediately a number of Lemmas and Theorems of \cite{AD}. We give more detailed structure theorems for semiideals of $\N_0$. It is known that each semiideal $M$ contains a subsemiideal of the form $dT_n$, where $T_n = \{n' \in \N_0 | n \leq n'\}$, and $d$ is the greatest common divisor of a generating set of $M$. We give an explicit formula for $n$, if $M$ is generated by two elements. 
  
 We also find a rather surprising statement about the uniqueness of certain generating systems, we show that each semiideal of $\N_0$ has a uniquely determined smallest set of generators. We present an algorithm to compute this smallest set of generators.
  
  This work is in progress and will be continued.

 \section{Preliminaries on Semimodules}
 \typeout{----------Section:  Preliminaries on Semimodules}
 \markright{Preliminaries on Semimodules} 
 %==================================

 %-------------------------------------------------------------------

 \begin{defn}
 A {\em commutative monoid} $M$ is a set $M$ together with a composition 
$+: M \times M \ni (m,m') \mapsto m+m' \in M$ and an element $0 \in M$ such that
 \begin{enumerate}
 \item $(m + m') + m'' = m + (m' + m'')$,
 \item $m + m' = m' + m$,
 \item $0 + m = m$,
 \end{enumerate}
 for all $m,m', m'' \in M$.
 
 Let $M, N$ be commutative monoids. A {\em homomorphism} of commutative monoids is a map $f: M \to N$, such that
 \begin{enumerate}
 \item $f(m + m') = f(m) + f(m')$,
 \item $f(0) = 0$,
 \end{enumerate}
 for all $m, m' \in M$.
 
 The commutative monoids together with these homomorphisms form the category $\Mon$ of commutative monoids.

 We define\\
 $ \Hom(M,N) :=  $\\
 $\mbox{\phantom{XX}}\{ f: M \to N \vert f \mbox{ is 
a homomorphism of commutative monoids}\},$\\
 $\End(M) := \Hom(M,M).$
 \hfill $\Box$
 \end{defn}
 
 $\Hom(M,N)$ is a subset of $\Set(M,N) = N^M$, the set of maps from $M$ to $N$. $\Set(M,N) = N^M$ is a commutative monoid with componentwise addition inherited from $N$. $\Hom(M,N)$ is a submonoid of $\Set(M,N)$.
 
 \begin{defn}
 A {\em semiring} $R$ is a commutative monoid $(R,+,0)$ together with a composition 
written as multiplication $$\cdot: R \times R \ni (r,r') \mapsto rr' = r \cdot r' \in R$$ and an element 
$1 \in R$ such that
 \begin{enumerate}
 \item $(rr')r'' = r(r'r'')$,
 \item $ (r + r')r'' = rr'' + r'r'',$
 \item $r(r' + r'') = rr' + rr'',$
 \item $1\cdot r = r = r \cdot 1,$
 \item $0 \cdot r = 0 = r \cdot 0$
 \end{enumerate}
 for all $r,r', r'' \in R$.
 
  Let $R, S$ be semirings. A {\em homomorphism} of semirings is a map $f: R \to S$, such that
 \begin{enumerate}
 \item $f(r + r') = f(r) + f(r')$,
 \item $f(0) = 0$,
 \item $f(rr') = f(r)f(r')$,
 \item $f(1) = 1$,
 \end{enumerate}
 for all $r, r' \in R$.
 
 The semirings together with these homomorphisms form the category $\sRg$ of semirings.
 
 We define \\
 $\sRg(R,S) := \{ f: R \to S \vert f \mbox{ is 
a homomorphism of semirings}\}.$ 
 \hfill $\Box$
 \end{defn}
 
 The set $\N_0$ of natural numbers (including $0$) equipped with the usual addition and multiplication is a semiring. If $M$ is a commutative monoid then $\End(M)$ is a semiring with composition $\circ$ as multiplication.
 
 \begin{defn}
 Let $R$ be a semiring. 
A {\em left $R$-semimodule} ${}_RM$ is a commutative monoid $M$ 
together with a composition
 $$\cdot: R \times M \ni (r,m) \mapsto rm = r \cdot m\in M$$
 such that
 \begin{enumerate}
 \item $(rr')m = r(r'm)$,
 \item $(r + r')m = rm + r'm$,
 \item $r(m + m') = rm + rm'$,
 \item $1 \cdot m = m,$
 \item $0 \cdot m = 0 = r \cdot 0$
 \end{enumerate}
 for all $r,r' \in R$, $m,m' \in M$.

  A {\em homomorphism of left $R$-semimodules} $f: {}_RM \to {}_RN$ is a 
map $f:M \to N$ with 
 \begin{enumerate}
 \item $f(m + m') = f(m) + f(m')$,
 \item $f(rm) = rf(m)$
 \end{enumerate}
 for all $r \in R$, $m,m' \in M$.

 The left $R$-semimodules together with these homomorphisms form the category $R\xsMod$ of left  $R$-semimodules.

 {\em Right $R$-semimodules} and {\em homomorphisms of right $R$-semimodules} are 
defined analogously. 

 We define
 $$\begin{array}{l}
 \Hom_R(.M,.N) := \\
\mbox{\phantom{XX}}\{ f: {}_RM \to {}_RN \vert f \mbox{ is 
a homomorphism of left $R$-semimodules}\}\\
 \End_R(.M) := \Hom_R(.M,.M).
 \end{array}$$ 
 Similarly $\Hom_R(M.,N.)$ denotes the set of homomorphisms of right 
$R$-semimodules $M_R$ and $N_R$.

 We often omit the dot, marking the side on which the semiring acts on the semimodule, and simply
write $\Hom_R(M,N)$ instead of $\Hom_R(.M,.N)$ and similarly $\End_R(M)$ instead of $\End_R(.M)$.
 \hfill $\Box$
 \end{defn}

 \begin{rem}
 Observe that $f(0) = 0$ for each homomorphism of $R$-semimodules. 
 
 $\Hom_R(.M,.N)$ is a submonoid of $\Hom(M,N)$, the commutative mo\-noid of monoid homomorphisms from $M$ to $N$.
 \end{rem}

 \begin{rem} \label{char module}
 %=========
 Let $R$ be a semiring and $M$ be a commutative monoid. Then 
there is a one-to-one correspondence between maps $f: R 
\times M \to M$ that make $M$ into a left $R$-semimodule and 
homomorphisms of semirings (always preserving the unit element) $g: 
R \to \End(M)$. 

 From here on we write '$R$-semimodule' instead of 'left $R$-semimodule'. 
 All results for left $R$-semimodules translate to right $R$-semimodules.
 \end{rem}

 \begin{lma}
 %==========
 \begin{enumerate} 
 \item Let ${}_RM, {}_RN$ be $R$-semimodules. Then\break $\Hom_R (M,N)$ is 
a commutative monoid by $(f + g) (m) : = f (m) + g (m)$.
 \item Let $f:{}_RM' \to {}_RM$ and $g:{}_RN \to {}_RN'$ be homomorphisms 
of $R$-semimodules. Then 
 $$\Hom_R(f,g): \Hom_R(M,N) \ni h \mapsto ghf \in \Hom_R(M',N')$$
is a homomorphism of commutative monoids.
 \end{enumerate}
 \end{lma}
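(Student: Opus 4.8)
The plan is to verify both parts by direct computation, relying on the structural facts already recorded: $\Hom_R(.M,.N)$ is a subset of $\Hom(M,N)$, which is itself a submonoid of $\Set(M,N)=N^M$ under the componentwise operations inherited from $N$.

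For part (1) I would argue as follows. Since $\Hom(M,N)$ is already a commutative monoid under $(f+g)(m):=f(m)+g(m)$ with the constant map $0$ as neutral element — associativity, commutativity, and the unit law all descending pointwise from the corresponding axioms for $N$ — it suffices to check that $\Hom_R(M,N)$ is a submonoid, i.e.\ closed under this addition and containing the zero map. Closure requires checking that $f+g$ is additive (a rearrangement of $f(m)+f(m')+g(m)+g(m')$ using commutativity of $+$ in $N$) and that $f+g$ is $R$-linear (here the distributivity axiom $r(n+n')=rn+rn'$ for the semimodule $N$ is precisely what is needed, via $(f+g)(rm)=rf(m)+rg(m)=r(f(m)+g(m))$). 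That the zero map lies in $\Hom_R(M,N)$ is immediate from axioms (3) and (5) for $N$.

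For part (2) I would first observe that $ghf$ is a composite of homomorphisms of $R$-semimodules and hence again one, so $\Hom_R(f,g)$ is well defined with the stated codomain. Additivity of $\Hom_R(f,g)$ then reduces, after evaluating at a point $m'\in M'$, to $g\bigl(h(f(m'))+h'(f(m'))\bigr)=g(h(f(m')))+g(h'(f(m')))$, i.e.\ to the additivity of $g$ together with the pointwise definition of the sum in $\Hom_R(M,N)$; and $\Hom_R(f,g)(0)=g\circ 0\circ f=0$ because $g(0)=0$.

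There is no genuine obstacle; the statement is bookkeeping. The only steps that actually use a hypothesis in an essential way are the use of commutativity of $N$ for the additivity of $f+g$ and the distributive law $r(n+n')=rn+rn'$ for its $R$-linearity, so those are the places to be careful.
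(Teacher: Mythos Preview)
Your proposal is correct and follows essentially the same route as the paper: reduce part (1) to showing that $\Hom_R(M,N)$ is a submonoid of the commutative monoid $\Hom(M,N)$ by checking that $f+g$ is $R$-linear via distributivity in $N$ and that the zero map lies in $\Hom_R(M,N)$; part (2) the paper dismisses as ``obvious,'' so your explicit verification of well-definedness and additivity of $h\mapsto ghf$ is, if anything, slightly more careful.
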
 

 \begin{proof}
 (1) Since $N$ is a commutative monoid the set of maps $\Set (M,N)$ 
is also a commutative monoid. The set of monoid homomorphisms 
$\Hom (M,N)$ is a submonoid of $\Set (M,N)$ (observe that 
this holds only for {\em commutative} monoids). We show that $\Hom_R 
(M,N)$ is a submonoid of $\Hom (M,N)$. We must show 
that $f + g$ is a homomorphism of $R$-semimodules if $f$ and $g$ 
are. Obviously $f+g$ is a homomorphism of monoids. Furthermore 
we have $(f+g) (r m) = f (r m)+ g (r m ) = r f (m) + r g 
(m) = r (f(m)+ g(m)) = r (f+g) (m).$ Clearly the zero map $0: M \to N$ 
is also a homomorphism of $R$-semimodules.

 (2) obvious.
 \end{proof}

 \begin{rem} Let $f: M \to N$ be a homomorphism of $R$-semimodules. 
 $f$ is bijective (an isomorphism) if and only if there 
exists a homomorphism of $R$-semimodules $g: N \to M$ such that 
 $$fg = \id_N \mbox{ and } gf = \id_M.$$
 Furthermore $g$ is uniquely determined by $f$.
 \end{rem}

 \begin{rem}
 Each commutative monoid is an $\N_0$-semimodule in a unique way. Each 
homomorphism of commutative monoids is a homomorphism of $\N_0$-semimodules. 
 \end{rem}

 \begin{proof} 
 By Remark \ref{char module} we have to find a unique 
homomorphism of semirings $g: {\mathbb N_0} \to \End(M)$. This 
holds more generally. If $S$ is a semiring then there is a 
unique homomorphism of semirings $g: \N_0 \to S$. Since a  
homomorphism of semirings must preserve the unit we have $g(1) = 1$. 
Define $g(n) := 1 + \ldots + 1$ ($n$-times) for $n \geq 0$. 
Then it is easy to check that $g$ is a homomorphism of semirings
and it is obviously unique. This means that $M$ is an
 $\N_0$-semimodule by $nm = m + \ldots + m$ ($n$-times) for $n 
\geq 0$. 

 If $f: M \to N$ is a homomorphism of commutative monoids then 
$f(nm) = f(m + \ldots + m) = f(m) + \ldots + f(m) = nf(m)$. Hence $f$ 
is a homomorphism of $\N_0$-semimodules. 
 \end{proof}
 
 \begin{defn}
 Let $M$ be an $R$-semimodule. A subset $N \subseteq M$ is a {\em subsemimodule} of $M$, 
 if 
 \begin{enumerate}
 \item $\forall n, n' \in N: n+n' \in N$,
 \item $\forall n \in N, r \in R: rn \in N$.
 \end{enumerate}
 \end{defn}
 
 Clearly a subsemimodule $N$ of $M$ is itself an $R$-semimodule with the induced operations.

 \section{Congruence Relations on Semimodules}
 %=========================================================
 \subsection{\bf Congruence relations}\hskip-3mm. 
 %------------------------------------------------------------
 \begin{defn}\label{congrel5}
 Let $\sim$ be an equivalence relation on $M$. Define
 $$\Rel_\sim := \{(m, m') \in M \times M \vert m \sim m' \}$$
 and $p_i: \Rel_\sim \to M$ by $p_1(m_1,m_2) := m_1$ and $p_2(m_1,m_2) := m_2$. 
 
 Observe that for all $x \in \Rel_\sim$ the equation $x = (p_1(x), p_2(x))$ holds.
 \end{defn}
 
 \begin{defn} \label{congrel}
 Let $M$ be an $R$-semimodule. A {\em congruence relation} (of left $R$-semimodules) $\sim$ on $M$ is an 
equivalence relation on $M$ satisfying
  $$m \sim m' \Rightarrow (m + n \sim m' + n \mbox { and } rm \sim rm')$$
for all $m,m',n \in M$ and all $r\in R$.
 \hfill $\Box$
 \end{defn}
 
 \begin{prop} \label{congrel6}
 Let $\sim$ be an equivalence relation and $\Rel_\sim$ together with $p_i: \Rel_\sim \to M, i=1,2$ be the associated relation (as given in Definition \ref{congrel5}). Then the following are equivalent:
 \begin{enumerate}
 \item $\sim$ is a congruence relation on $M$,
 \item $\Rel_\sim$ is an $R$-subsemimodule of $M \times M$ and $p_1, p_2$ are homomorphisms of $R$-semi\-modules.
 \end{enumerate}
 \end{prop}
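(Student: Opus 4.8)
The plan is to prove the two implications separately, working throughout with $\Rel_\sim$ as a \emph{subset} of $M\times M$ (as recorded by the identity $x=(p_1(x),p_2(x))$ in Definition \ref{congrel5}) and using that the monoid and scalar operations on the product $M\times M$ are defined componentwise. Under that identification, asserting that $\Rel_\sim$ is an $R$-subsemimodule is asking exactly for closure of the relation under componentwise addition and scalar multiplication, and the maps $p_1,p_2$ are the restrictions of the two canonical projections $M\times M\to M$.

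For $(1)\Rightarrow(2)$, assume $\sim$ is a congruence relation. Closure under scalars is immediate: if $(m,m')\in\Rel_\sim$, i.e.\ $m\sim m'$, then $rm\sim rm'$ by the defining property, so $r\cdot(m,m')=(rm,rm')\in\Rel_\sim$. For closure under addition I would take $(m_1,m_1'),(m_2,m_2')\in\Rel_\sim$; from $m_1\sim m_1'$ I get $m_1+m_2\sim m_1'+m_2$, and from $m_2\sim m_2'$ together with commutativity of $+$ in $M$ I get $m_1'+m_2=m_2+m_1'\sim m_2'+m_1'=m_1'+m_2'$, so transitivity yields $m_1+m_2\sim m_1'+m_2'$, i.e.\ $(m_1,m_1')+(m_2,m_2')\in\Rel_\sim$. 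Reflexivity of $\sim$ gives $(0,0)\in\Rel_\sim$, so $\Rel_\sim$ is nonempty and is genuinely an $R$-subsemimodule of $M\times M$. Finally, since $\Rel_\sim$ is a subsemimodule and $p_1,p_2$ are restrictions to it of the projections $M\times M\to M$ — which are homomorphisms of $R$-semimodules and remain so after restriction to a subsemimodule — we get $p_1,p_2\in\Hom_R(\Rel_\sim,M)$.

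For $(2)\Rightarrow(1)$, assume $\Rel_\sim$ is an $R$-subsemimodule of $M\times M$ (the hypothesis on $p_1,p_2$ is then automatic and is not needed here). Suppose $m\sim m'$, i.e.\ $(m,m')\in\Rel_\sim$. For any $r\in R$, closure under scalars gives $(rm,rm')=r\cdot(m,m')\in\Rel_\sim$, hence $rm\sim rm'$; and for any $n\in M$, reflexivity gives $(n,n)\in\Rel_\sim$, so closure under addition gives $(m+n,m'+n)=(m,m')+(n,n)\in\Rel_\sim$, hence $m+n\sim m'+n$. Thus $\sim$ satisfies the defining implication of a congruence relation. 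The whole argument is routine; the only point that requires a moment's attention is the closure of $\Rel_\sim$ under addition in $(1)\Rightarrow(2)$, where commutativity of $+$ in $M$ is genuinely used to align the two one-sided translations before invoking transitivity — recall that Definition \ref{congrel} only postulates $m+n\sim m'+n$ on one side.
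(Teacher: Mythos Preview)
Your proof is correct and complete. The paper itself records the proof as ``Straightforward'' with no details, so your argument is simply a careful unpacking of that one word; there is no alternative approach to compare against, and the only nontrivial step you rightly flag---using commutativity and transitivity to promote the one-sided compatibility $m+n\sim m'+n$ to full additivity of $\Rel_\sim$---is exactly the point one must check.
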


 \begin{proof} Straightforward. \end{proof}
 
 \subsection{\bf Congruence relations and and their partitions} \label{partition} \hskip-3mm. \newline \vskip-3mm %------------------------------------------------------------ 
 It is well known that there is a one-to-one correspondence between equivalence relations $\sim$ on a set $M$ and partitions $\Pa$ of $M$ by 
 $$m \sim m'\iff \overline m = \overline {m'} \mbox{ in }\Pa.$$
 Each partition $\Pa$ on a set $M$ comes with a uniquely defined (canonical) surjective map $\nu: M \to \Pa$.
 
 \begin{prop} \label{congrel2}
 Let $M$ be an $R$-semimodule. Let $\sim$ be an equivalence relation on $M$ and let $M/\simz$ be the associated partition on $M$.

 \begin{enumerate}
 \item If $\sim$ is a congruence relation then $M/\simz$ is an $R$-semi\-module by
  $$\overline{m} + \overline{m'} := \overline{m + m'}$$
   and 
  $$r \overline{m} := \overline {rm}.$$
  Furthermore the canonical map $\nu: M \ni m \mapsto \overline{m} \in M/\simz$ 
 is a homomorphism of $R$-semimodules.

 \item If $M/\simz$ is an $R$-semimodule and the canonical map $\nu: M \ni m \mapsto \overline{m} \in M/\simz$ 
 is a homomorphism of $R$-semimodules, then $\sim$ is a congruence relation.

 \item The one-to-one correspondence between equivalence relations $\sim$ on $M$ and partitions $\Pa = M/\simz$ of $M$ restricts to a one-to-one correspondence between the congruence relations on $M$ and the $R$-semimodule structures on $M/\sim$ such that $\nu: M\to M/\simz$ is a homomorphism.
 \end{enumerate}

 \end{prop}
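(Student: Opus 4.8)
The plan is to establish (1) and (2) by direct verification and then obtain (3) by observing that the two constructions are mutually inverse restrictions of the already-known bijection between equivalence relations and partitions.

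For (1), the only step requiring genuine care is the well-definedness of the proposed operations on $M/\simz$; everything afterward is formal. So first I would fix $m \sim m_1$ and $m' \sim m_1'$ and show $m + m' \sim m_1 + m_1'$: the congruence property applied to $m \sim m_1$ gives $m + m' \sim m_1 + m'$, and applied to $m' \sim m_1'$ (taking $n = m_1$, together with commutativity $m_1 + m' = m' + m_1$ and $m_1' + m_1 = m_1 + m_1'$) it gives $m_1 + m' \sim m_1 + m_1'$, so transitivity yields the claim; hence $\overline m + \overline{m'} := \overline{m + m'}$ is well defined. Similarly $m \sim m_1$ forces $rm \sim rm_1$ by the congruence property, so $r\overline m := \overline{rm}$ is well defined. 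Once the operations exist, each semimodule axiom for $M/\simz$ is deduced by applying $\nu$ to the corresponding identity in $M$ and using surjectivity of $\nu$; for instance $(rr')\overline m = \overline{(rr')m} = \overline{r(r'm)} = r\overline{r'm} = r(r'\overline m)$, and likewise for the two distributivities, $1\cdot\overline m = \overline m$, $0\cdot\overline m = \overline 0$ and $r\cdot\overline 0 = \overline 0$. Finally $\nu$ is a homomorphism by the very definition of $+$ and $\cdot$ on $M/\simz$.

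For (2), suppose $M/\simz$ carries an $R$-semimodule structure making $\nu$ a homomorphism. Since $\sim$ is assumed to be an equivalence relation, it suffices to verify the two implications of Definition \ref{congrel}. If $m \sim m'$, i.e.\ $\nu(m) = \nu(m')$, then $\nu(m+n) = \nu(m) + \nu(n) = \nu(m') + \nu(n) = \nu(m'+n)$, so $m + n \sim m' + n$; and $\nu(rm) = r\nu(m) = r\nu(m') = \nu(rm')$, so $rm \sim rm'$. Thus $\sim$ is a congruence relation.

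For (3), recall from Subsection \ref{partition} that $\sim \mapsto M/\simz$ is a bijection between equivalence relations on $M$ and partitions of $M$. By (1) it sends congruence relations to partitions equipped with an $R$-semimodule structure for which $\nu$ is a homomorphism, and by (2) every such partition arises from a congruence relation; so the bijection restricts as stated, provided the structure attached to a congruence relation in (1) is the unique one making $\nu$ a homomorphism. This uniqueness is immediate: if $\nu$ is a homomorphism then necessarily $\overline m + \overline{m'} = \nu(m)+\nu(m') = \nu(m+m') = \overline{m+m'}$ and $r\overline m = r\nu(m) = \nu(rm) = \overline{rm}$, and since every element of $M/\simz$ has the form $\overline m$ (surjectivity of $\nu$) the operations are forced. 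Hence the two assignments are mutually inverse and (3) follows. I do not expect a real obstacle here; the only delicate points in the whole argument are the well-definedness check in (1) — which is exactly where the congruence hypothesis is consumed — and this uniqueness observation.
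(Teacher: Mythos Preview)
Your proof is correct and follows the same approach as the paper's, only with more detail: the paper checks well-definedness in (1) by varying just one argument (which suffices by symmetry and transitivity), proves (2) exactly as you do, and dismisses (3) as ``Straightforward'' without spelling out the uniqueness of the induced semimodule structure that you make explicit.
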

 
 \begin{proof} 
 (1) The main thing to show is that addition and multiplication on $M/\simz$ are well-defined. Given $m, m', m'' \in M$. Assume that $\overline m = \overline{m'}$ or equivalently $m \sim m'$. Then $m + m'' \sim m' + m''$ hence $\overline{m + m''} = \overline {m' + m''}$. Furthermore $rm \sim rm'$ hence $\overline{rm} = \overline{rm'}$.
 
 (2) Let $m \sim m'$, then $\nu(m) = \nu(m')$. Thus $\overline{m + m''} = \nu(m + m'') = \nu(m) + \nu(m'') = \nu(m') + \nu(m'') = \nu(m' + m'') = \overline{m' + m''}$, hence $m + m'' \sim m' + m''$. Similarly we get $rm \sim rm'$.
 
 (3) Straightforward.
 \end{proof}

 There are three important ways to construct congruence relations on $R$-semimodules which we will discuss in the following subsections.

 \subsection{\bf Congruence relations and homomorphisms}\hskip-3mm. 
 %-------------------------------------------------------------------
 \begin{lma} \label{congrel2a}
 If $f: M \to N$ is a homomorphism of $R$-semimodules, then $\sim_f$ defined by 
 $$m \sim_f m' :\iff f(m) = f(m')$$ for all $m, m' \in M$ is a congruence relation on $M$.
 \end{lma}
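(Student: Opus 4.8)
The plan is to verify directly that $\sim_f$ satisfies the two clauses in Definition \ref{congrel}: first that it is an equivalence relation, and second that it is compatible with addition and with the $R$-action.

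First I would observe that $\sim_f$ is an equivalence relation because equality in $N$ is: reflexivity, symmetry, and transitivity of $\sim_f$ are immediate from the corresponding properties of $=$ on $N$, since $m \sim_f m'$ is by definition the statement $f(m) = f(m')$. No use of the semimodule structure is needed here.

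Next I would check the congruence condition. Suppose $m \sim_f m'$, i.e. $f(m) = f(m')$. For any $n \in M$, using that $f$ is a homomorphism of $R$-semimodules (hence additive), $f(m+n) = f(m) + f(n) = f(m') + f(n) = f(m'+n)$, so $m + n \sim_f m' + n$. Similarly, for any $r \in R$, using that $f$ commutes with the $R$-action, $f(rm) = r f(m) = r f(m') = f(rm')$, so $rm \sim_f rm'$. This establishes the implication required in Definition \ref{congrel}.

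Since every step reduces to a one-line computation from the defining properties of a semimodule homomorphism, there is no real obstacle; the only thing to be careful about is to invoke both the additivity and the $R$-linearity of $f$ and to note explicitly that the equivalence-relation axioms are inherited from equality on $N$. Hence $\sim_f$ is a congruence relation on $M$.
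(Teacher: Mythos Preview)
Your proposal is correct and is precisely the straightforward verification the paper has in mind; the paper's own proof consists of the single word ``Straightforward,'' and your write-up simply spells out those routine checks.
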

 
 \begin{proof} Straightforward. \end{proof}
  
 \begin{thm}{\bf (Homomorphism Theorem)}\label{congrel2b}
  If $f: M \to N$ is a homomorphism of $R$-semimodules and $\sim$ is a congruence relation  
on $M$ such that
 $$m \sim m' \impl f(m) = f(m'), $$ 
 for all $m,m' \in M$, then there is a unique homomorphism $f':M/\simz \ \to N$ such that 
the following diagram commutes: 
 
 $$ \qtriangle[ M ` M/\simz ` \ N.; \nu ` f ` f' ] $$
 \end{thm}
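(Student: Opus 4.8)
The plan is to use the surjectivity of $\nu$ to pin down $f'$ uniquely and then verify that the forced candidate is a homomorphism. First I would note that uniqueness is automatic: since $\nu$ is surjective, every element of $M/\simz$ has the form $\overline m = \nu(m)$ for some $m \in M$, so any homomorphism $g: M/\simz \to N$ with $g\nu = f$ must satisfy $g(\overline m) = g(\nu(m)) = f(m)$. Hence there is at most one candidate, namely the assignment $f'(\overline m) := f(m)$, and the whole proof reduces to showing that this assignment is a well-defined homomorphism of $R$-semimodules.

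For well-definedness, suppose $\overline m = \overline{m'}$, i.e.\ $m \sim m'$. By the hypothesis $m \sim m' \impl f(m) = f(m')$ the value $f'(\overline m) := f(m)$ does not depend on the chosen representative, so $f'$ is a well-defined map $M/\simz \to N$. The identity $f'\nu = f$ is then immediate from the definition, and $f'(\overline 0) = f(0) = 0$ since $f$ is a homomorphism.

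It remains to check that $f'$ respects the $R$-semimodule structure that Proposition \ref{congrel2}(1) puts on $M/\simz$. Using $\overline m + \overline{m'} = \overline{m+m'}$ and $r\overline m = \overline{rm}$, I would compute $f'(\overline m + \overline{m'}) = f'(\overline{m+m'}) = f(m+m') = f(m) + f(m') = f'(\overline m) + f'(\overline{m'})$ and $f'(r\overline m) = f'(\overline{rm}) = f(rm) = rf(m) = rf'(\overline m)$ for all $m, m' \in M$ and $r \in R$. This shows $f' \in \Hom_R(M/\simz, N)$.

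I do not expect a genuine obstacle: this is the standard factorization-through-a-quotient argument. The only place where the specific hypothesis on $f$ enters is the well-definedness step, and the only structural input is Proposition \ref{congrel2}, which guarantees that the operations on $M/\simz$ are precisely those making the displayed computations go through; beyond that everything is a routine verification.
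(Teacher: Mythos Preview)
Your proof is correct and is exactly the standard argument; the paper itself simply writes ``Straightforward'' for this theorem, so your proposal is a fully spelled-out version of what the authors leave to the reader.
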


 \begin{proof} Straightforward. \end{proof}
 
 \begin{prop}
  If $\sim$ is a congruence relation on $M$ and $f: M \to N$ is a homomorphism of $R$-semimodules such that
 $$f(m) = f(m') \impl m \sim m', $$ 
 for all $m,m' \in M$, then $$\forall m, m' \in M\ :\ m \sim_f m' \impl m \sim m'.$$ 
 
 Furthermore there is a unique homomorphism $\nu':M/\simz_f \ \to M/\simz$ such that the following diagram commutes: 
 
 $$ \qtriangle[ M ` M/\simz_f ` \ M/\simz.; \nu_f ` \nu ` \nu' ] $$
 \end{prop}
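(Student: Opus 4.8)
The plan is to get the first assertion straight from the definitions and then to obtain $\nu'$ by a single application of the Homomorphism Theorem (Theorem~\ref{congrel2b}).

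First I would unwind the implication $m \sim_f m' \impl m \sim m'$. By Lemma~\ref{congrel2a} the relation $\sim_f$ is exactly $m \sim_f m' \iff f(m) = f(m')$; so if $m \sim_f m'$ then $f(m) = f(m')$, and the standing hypothesis $f(m) = f(m') \impl m \sim m'$ gives $m \sim m'$ at once. No computation is needed here.

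Next I would read the second claim as an instance of Theorem~\ref{congrel2b}, applied with the congruence relation $\sim_f$ in place of $\sim$ and with the canonical map $\nu: M \to M/\simz$ --- which is a homomorphism of $R$-semimodules by Proposition~\ref{congrel2} --- in the role of $f: M \to N$. The hypothesis demanded by that theorem is $m \sim_f m' \impl \nu(m) = \nu(m')$; but by the first part $m \sim_f m'$ forces $m \sim m'$, and $m \sim m'$ means precisely $\overline m = \overline{m'}$, i.e. $\nu(m) = \nu(m')$, by the correspondence between $\sim$ and the partition $M/\simz$. Theorem~\ref{congrel2b} then delivers a unique homomorphism of $R$-semimodules $\nu': M/\simz_f \to M/\simz$ with $\nu' \nu_f = \nu$, which is exactly commutativity of the displayed triangle, and uniqueness comes with it.

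I do not expect a real obstacle. The only points needing a moment of care are checking the hypotheses of Theorem~\ref{congrel2b} against $\sim_f$ (and not against $\sim$), and noting that $\nu$ is genuinely an $R$-semimodule homomorphism so that the theorem applies in the semimodule setting and not merely set-theoretically --- both facts being furnished by the earlier results.
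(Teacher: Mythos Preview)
Your argument is correct and is exactly the straightforward verification the paper has in mind (the paper's own proof is simply ``Straightforward''). Your choice to invoke Theorem~\ref{congrel2b} with $\sim_f$ and $\nu$ is the natural way to spell it out.
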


 \begin{proof} Straightforward. \end{proof}

 \subsection{\bf Congruence relations and pairs of homomorphisms}\hskip-3mm. \newline \vskip-3mm %-----------------------------------------------------------------------------------------
 In this section we want to construct a coequalizer for a pair of homomorphisms $f, g: N \to M$ of $R$-semimodules. The existence of such a coequalizer is guaranteed by the fact, that the category of semimodules is a category of equationally defined algebras. A discussion for this can be found in an Internet note \cite {JA2} published in 2008. We do not only want to have the proof of existence, but also a construction of coequalizers. 

 As a leading example we want to compute the coequalizer of the pair of homomorphisms 
$4\cdot, 6\cdot: \N_0 \to \N_0$. 

 \begin{lma} \label{congrrel4}
 Let $f, g: N \to M$ be two homomorphisms of $R$-semi\-modules. Let $\sim_{\langle f,g \rangle}$ be the relation on $M$ given by $m \sim_{\langle f,g \rangle} m'$ if and only if $h(m) = h(m')$ for all $R$-semimodules $P$ and all homomorphisms $h: M \to P,$ such that $h \circ f = h \circ g$ holds. Then $\sim_{\langle f,g \rangle}$ is a congruence relation.
 \end{lma}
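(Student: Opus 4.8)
The plan is to exhibit $\sim_{\langle f,g\rangle}$ as an intersection of congruence relations, so that it becomes a congruence relation by facts already available in this section. For a relation $\approx$ on $M$, write $(\ast)$ for the condition that $f(n)\approx g(n)$ for all $n\in N$, and let $\approx_0$ denote the intersection of all congruence relations on $M$ satisfying $(\ast)$. This is legitimate: congruence relations on $M$ are subsets of the set $M\times M$, hence form a set, and the total relation $M\times M$ is a congruence relation satisfying $(\ast)$, so the family is nonempty. Moreover an intersection of congruence relations is again a congruence relation: it is an equivalence relation by the usual fact about intersections of equivalence relations, and by Proposition \ref{congrel6} it corresponds to the intersection of the $R$-subsemimodules $\Rel_\approx$ of $M\times M$, which is again an $R$-subsemimodule. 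Hence $\approx_0$ is a congruence relation on $M$.

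It then suffices to prove $\sim_{\langle f,g\rangle}=\approx_0$. For $\sim_{\langle f,g\rangle}\subseteq\approx_0$, let $\approx$ be any congruence relation on $M$ satisfying $(\ast)$; by Proposition \ref{congrel2} the canonical surjection $\nu:M\to M/\!\approx$ is a homomorphism of $R$-semimodules, and $(\ast)$ states precisely that $\nu\circ f=\nu\circ g$. Applying the defining property of $\sim_{\langle f,g\rangle}$ to $h=\nu$ gives: $m\sim_{\langle f,g\rangle}m'$ implies $\nu(m)=\nu(m')$, i.e. $m\approx m'$; intersecting over all such $\approx$ yields $\sim_{\langle f,g\rangle}\subseteq\approx_0$. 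For the reverse inclusion, let $h:M\to P$ be any homomorphism of $R$-semimodules with $h\circ f=h\circ g$; by Lemma \ref{congrel2a} the relation $\sim_h$ is a congruence relation, and $h(f(n))=h(g(n))$ for all $n\in N$ shows $\sim_h$ satisfies $(\ast)$, whence $\approx_0\subseteq\sim_h$. As $h$ ranges over all such homomorphisms, this gives $\approx_0\subseteq\sim_{\langle f,g\rangle}$, completing the argument.

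I do not anticipate a genuine obstacle; the content is essentially bookkeeping on top of Lemma \ref{congrel2a}, Proposition \ref{congrel6} and Proposition \ref{congrel2}. The one point worth flagging is the clash between the proper-class quantification ``for all $R$-semimodules $P$ and all homomorphisms $h$'' in the definition of $\sim_{\langle f,g\rangle}$ and the set-sized intersection defining $\approx_0$; the identity $\sim_{\langle f,g\rangle}=\approx_0$ is exactly what reconciles them, and as a bonus it makes transparent that $\sim_{\langle f,g\rangle}$ is the smallest congruence relation on $M$ that coequalizes $f$ and $g$ --- which is what one wants for the coequalizer construction announced at the start of this subsection.
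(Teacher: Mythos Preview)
Your proof is correct. The paper merely writes ``Straightforward'', and indeed the most direct argument is simpler than yours: observe that by definition $\sim_{\langle f,g\rangle}=\bigcap_h \sim_h$, the intersection running over all homomorphisms $h:M\to P$ with $h\circ f=h\circ g$; each $\sim_h$ is a congruence relation by Lemma~\ref{congrel2a}, and an intersection of congruence relations (even indexed by a proper class, since all relations live inside the fixed set $M\times M$) is again a congruence relation. One can also just verify the axioms directly from the fact that each $h$ is an $R$-homomorphism.

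You instead prove the stronger statement $\sim_{\langle f,g\rangle}=\approx_0$, where $\approx_0$ is the intersection of all congruence relations $\approx$ with $f(n)\approx g(n)$ for every $n$. This detour is not needed for the lemma as stated, but it is not wasted effort either: it identifies $\sim_{\langle f,g\rangle}$ as the \emph{smallest} congruence relation coequalizing $f$ and $g$, which is exactly the content of Theorem~\ref{congrrel8} (the equality $\sim_{\langle f,g\rangle}=\sim_{(f,g)}$) once one knows that $\sim_{(f,g)}$ is also that smallest relation. So your argument effectively front-loads part of the later analysis; the paper defers it and keeps the present lemma to a one-line verification.
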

 
 \begin{proof}
 Straightforward.
 \end{proof}
 
 \begin{thm}
 Let $f,g: N \to M$ be homomorphisms of $R$-semimo\-dules and let $\sim_{\langle f,g \rangle}$ be the congruence relation introduced in \ref{congrrel4}. Then $\nu: M \to M/\simz_{\langle f,g \rangle} $ is a coequalizer of $f,g$, i.e.
 \begin{enumerate}
 \item $\nu \circ f = \nu  \circ g$,
 \item for all $h: M \to P$ with $h \circ f = h \circ g$ there is a unique $h': M/\simz_{\langle f,g \rangle} \to P$ such that $h' \circ \nu = h$.
 \end{enumerate}
 \end{thm}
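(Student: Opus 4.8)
The plan is to reduce everything to the Homomorphism Theorem (Theorem \ref{congrel2b}), the point being that $\sim_{\langle f,g \rangle}$ has been defined precisely so that its quotient is initial among $R$-semimodules receiving a homomorphism from $M$ that coequalizes $f$ and $g$. Since $\sim_{\langle f,g \rangle}$ is a congruence relation by Lemma \ref{congrrel4}, Proposition \ref{congrel2} already tells us that $M/\simz_{\langle f,g \rangle}$ carries a well-defined $R$-semimodule structure for which $\nu: M \to M/\simz_{\langle f,g \rangle}$ is a homomorphism of $R$-semimodules, so only the two coequalizer conditions remain.

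For (1) I would evaluate at an arbitrary $n \in N$: the claim $\nu \circ f = \nu \circ g$ is equivalent to $f(n) \sim_{\langle f,g \rangle} g(n)$ for all $n$, i.e., to $h(f(n)) = h(g(n))$ for every homomorphism $h: M \to P$ with $h \circ f = h \circ g$. But that is just the equation $h \circ f = h \circ g$ read off at $n$, so it holds by hypothesis on $h$, and (1) follows. For (2), given $h: M \to P$ with $h \circ f = h \circ g$, the defining clause of $\sim_{\langle f,g \rangle}$ applied to this particular $h$ yields the implication $m \sim_{\langle f,g \rangle} m' \Rightarrow h(m) = h(m')$ for all $m,m' \in M$; hence Theorem \ref{congrel2b} applies and produces a unique homomorphism $h': M/\simz_{\langle f,g \rangle} \to P$ with $h' \circ \nu = h$. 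The uniqueness asserted in (2) is exactly the uniqueness clause of Theorem \ref{congrel2b} (equivalently, it follows from surjectivity of the canonical map $\nu$).

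I do not expect a genuine obstacle: the entire content is unwinding the definition of $\sim_{\langle f,g \rangle}$. The only points meriting a moment's care are that property (1) should be established first, so that the universal property is being applied to a cone that actually exists, and that the hypothesis $h \circ f = h \circ g$ is precisely what is needed to invoke the defining condition of $\sim_{\langle f,g \rangle}$ in the implication used for (2).
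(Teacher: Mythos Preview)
Your proof is correct and follows essentially the same route as the paper: both arguments verify (1) by unwinding the definition of $\sim_{\langle f,g \rangle}$ at an arbitrary $n\in N$, and both obtain (2) by noting that $m \sim_{\langle f,g \rangle} m' \Rightarrow h(m)=h(m')$ whenever $h\circ f = h\circ g$, which is exactly the hypothesis needed to factor $h$ through $\nu$. The only cosmetic difference is that you explicitly invoke Proposition~\ref{congrel2} and the Homomorphism Theorem~\ref{congrel2b}, whereas the paper leaves these references implicit.
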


\begin{proof} 
 $C:= M/\simz_{\langle f,g \rangle}$ is an $R$-semimodule and the quotient map $\nu : M \to C$ is a homomorphism of left $R$-semimodules.  We show that $C$ together with $\nu : M \to C$ is a coequalizer of $f$ and $g$. 
 
 (1) Let $h: M \to P$ be a homomorphism with $h \circ f = h \circ g$. For $n \in N$ we get $h(f(n)) = h(g(n))$ hence $f(n) \sim_{\langle f,g \rangle} g(n)$ and thus $\nu \circ f = \nu  \circ g$.
 
 (2) Let $h:M \to P$ be a homomorphism satisfying $h \circ f = h \circ g$. Let $c = \nu (m) = \nu (m')$ or equivalently $m \sim_{\langle f,g \rangle} m'$. Then $h(m) = h(m')$. So $h$ furnishes a unique homomorphism $h' : C \to P$ with $h = h' \circ \nu$ which is the required factorization. Uniqueness of the factorization is clear, whence $C$ is a coequalizer.
 \end{proof}

 This construction of a coequalizer does not lend itself to an explicit construction in our example. So we will consider an interim approach modeled after the coequalizer construction in modules. If $f, g: N \to M$ are homomorphisms of modules, then the image of $f - g$ represents the set of elements congruent to zero in $M$. Equivalently two elements $m, m' \in M$ are congruent iff their difference is in the image of $f - g$ or $m \sim m' \iff m' - m = (f - g)(n)$ for some $n \in N$ or $m + f(n) = m' + g(n)$. In the case of semimodules  differences do not exist in general, so we replace $n$ by $x - y$. In a first attempt to get a suitable congruence relation for semimodules we use $m \sim m' \iff \exists x, y \in N: m + f(x) + g(y) = m' + f(y) + g(x)$.

 \begin{lma} \label{congrrel6}
 Let $f, g: N \to M$ be two homomorphisms of $R$-semimodules. Then\\ 
 $m \sim_{[f,g]} m' :\iff \exists n, n' \in N: m+f(n)+g(n') = m'+f(n')+g(n) $\\
 is a congruence relation. Furthermore $f(n) \sim_{[f,g]}  g(n)$ for all $n \in N$.
 \end{lma}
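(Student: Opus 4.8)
The plan is to verify directly that $\sim_{[f,g]}$ meets the requirements of an equivalence relation and then the two compatibility conditions of Definition \ref{congrel}. Reflexivity is handled by the witnesses $n=n'=0$, using $f(0)=g(0)=0$, so that the defining equation degenerates to $m=m$. Symmetry is built into the form of the definition: if $m+f(n)+g(n')=m'+f(n')+g(n)$, then reading this equation with the roles of the two witnesses interchanged is precisely a witness for $m'\sim_{[f,g]}m$.

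For transitivity, suppose $m\sim_{[f,g]}m'$ via $(n_1,n_1')$ and $m'\sim_{[f,g]}m''$ via $(n_2,n_2')$. The key move is to add $f(n_2)+g(n_2')$ to both sides of the first equation, then on the resulting right-hand side recognize the block $m'+f(n_2)+g(n_2')$ and replace it using the second equation by $m''+f(n_2')+g(n_2)$. Rearranging with commutativity of $+$ and using that $f$ and $g$ are additive yields
$$m+f(n_1+n_2)+g(n_1'+n_2') = m''+f(n_1'+n_2')+g(n_1+n_2),$$
so $(n_1+n_2,\,n_1'+n_2')$ witnesses $m\sim_{[f,g]}m''$. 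This "adding the two relations" step is the only point that is not completely mechanical, and it is the mild obstacle one must spot.

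Compatibility with addition is immediate: adding a fixed $p\in M$ to both sides of the defining equation preserves it, so $m\sim_{[f,g]}m'$ gives $m+p\sim_{[f,g]}m'+p$ with the same witnesses. Compatibility with the $R$-action follows by multiplying the defining equation through by $r\in R$ and invoking that $f,g$ are homomorphisms of $R$-semimodules, i.e.\ $rf(n)=f(rn)$ and $rg(n')=g(rn')$; hence $rm\sim_{[f,g]}rm'$ via $(rn,rn')$. Finally, $f(n)\sim_{[f,g]}g(n)$ is witnessed by $(0,n)$, since $f(n)+f(0)+g(n)=f(n)+g(n)=g(n)+f(n)+g(0)$ by commutativity. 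Assembling these observations (and invoking Proposition \ref{congrel6} if one prefers to phrase the compatibility in terms of $\Rel_{\sim}$) completes the proof.
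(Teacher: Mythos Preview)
Your proof is correct and follows essentially the same approach as the paper: direct verification of reflexivity, symmetry, transitivity (via the summed witnesses $(n_1+n_2,\,n_1'+n_2')$), compatibility with addition and scalar multiplication, and the same witnesses $(0,n)$ for $f(n)\sim_{[f,g]}g(n)$. The paper's argument is slightly more terse but structurally identical.
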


 \begin{proof}
 $m \sim_{[f,g]}  m$ and $m \sim_{[f,g]}  m' \impl m' \sim_{[f,g]}  m$ are clear. Assume $m \sim_{[f,g]}  m'$ and $m' \sim_{[f,g]}  m''$. Then there are $n, n',l, l' \in N$ with $m+f(n)+g(n') = m'+f(n')+g(n)$ and $m' + f(l)+g(l') = m'' + f(l')+g(l)$  hence
  $$\begin{array}{c}
  m + f(n + l)+g(n'+l') = m + f(n) + g(n') + f(l) + g(l') =\\ m' + f(n') + g(n) + f(l) + g(l') =\\ m'' + f(l') + g(l) + f(n') + g(n) = m'' + f(n'+l') + g(n + l)
  \end{array}$$
  so that $m \sim_{[f,g]}  m''$.
 
 Let $m \sim_{[f,g]}  m'$ with $m+f(n)+g(n') = m'+f(n')+g(n)$ and let $p \in M$. Then 
 $m + p + f(n) + g(n') = m' + p + f(n') + g(n)$, hence $m + p \sim_{[f,g]}  m' + p$. Furthermore
 $rm + f(rn) + g(rn') = r(m + f(n) + g(n')) = r(m' + f(n') + g(n)) = rm' + f(rn') + g(rn)$, hence $rm \sim_{[f,g]}  rm'$.

$f(n) + f(0) + g(n) = g(n) + f(n) + g(0)$ implies $f(n) \sim_{[f,g]}  g(n)$ for all $n \in N$.
\end{proof}

 We will see, however, that $h \circ f = h \circ g$ does not imply in general that $h(m) = h(m')$ for all $m, m'$ with $m \sim_{[f,g]} m'$. Indeed if $m \sim_{[f,g]}  m'$ then $m +f(n)+g(n') = m'+f(n')+g(n)$ for some $n, n' \in N$. If we apply $h$ to this equation we get $h(m) + hf(n) + hg(n') = h(m') + hf(n') + hg(n) = h(m') + hg(n') + hf(n)$. If $M$ if cancellable, then this implies $h(m) = h(m')$. But we will see that this does not hold in general. So our construction will not give a coequalizer.

 \begin{eg} \label{egcongrel1}
 Consider the two homomorphisms $4\cdot, 6\cdot: \N_0 \to \N_0$, given by the multiplication with 4 resp.\ 6. Then $0 \sim_{[f,g]} 2$ since $0 + 4 \cdot 0 + 6 \cdot 1 = 2 + 4 \cdot 1 + 6 \cdot 0$. So all even natural numbers are congruent to $0$. Since $1$ is not congruent to $0$, we get two distinct congruence classes $\bar 0$ and $\bar 1$. We will see further down that the coequalizer of the two homomorphisms is different from $\{ \bar 0, \bar 1 \}$ so even in the situation of a cancellable semimodule the above congruence relation does not give a coequalizer in the category of all semimodules. 
 \end{eg} 

  We consider now a third congruence relation.

 \begin{lma} \label{congrrel3}
 Let $f, g: N \to M$ be two homomorphisms of $R$-semimodules. Then 
 $$\begin{array}{l} 
     m \sim_{(f,g)} m' :\iff \\
     \exists m_1, \ldots, m_k \in M, n_1, \ldots, n_k, n'_1, \ldots, n'_k \in N: \\
     m = m_1 + f(n_1) + g(n'_1),\\
     m_1 + f(n'_1) + g(n_1) = m_2 + f(n_2) + g(n'_2), \\
     \ldots,\\
     m_{k-1} + f(n'_{k-1}) + g(n_{k-1}) = m_k + f(n_k) + g(n'_k), \\
     m_k + f(n'_k) + g(n_k) = m'
 \end{array}$$
 is a congruence relation.
 \end{lma}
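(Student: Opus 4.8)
The plan is to verify directly, from Definition \ref{congrel}, each of the five properties that make $\sim_{(f,g)}$ a congruence relation — reflexivity, symmetry, transitivity, compatibility with $+$, and compatibility with the $R$-action — by elementary manipulation of the defining chains of equations. Reflexivity is the degenerate case: take $k=1$ and $n_1 = n_1' = 0$ with $m_1 = m$; since $f(0) = g(0) = 0$, the first chain equation becomes $m = m_1$ and the last becomes $m_1 = m'$, so $m \sim_{(f,g)} m$.

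For symmetry I would read a given chain backwards. If $m \sim_{(f,g)} m'$ is witnessed by $m_1,\dots,m_k$ and parameters $n_i,n_i'$, set $\tilde m_i := m_{k+1-i}$, $\tilde n_i := n_{k+1-i}'$, $\tilde n_i' := n_{k+1-i}$. Then the original last equation $m_k + f(n_k') + g(n_k) = m'$ becomes the new first equation $m' = \tilde m_1 + f(\tilde n_1) + g(\tilde n_1')$; each internal equation $m_i + f(n_i') + g(n_i) = m_{i+1} + f(n_{i+1}) + g(n_{i+1}')$ is carried to a valid internal equation of the reversed chain by this relabelling; and the original first equation $m = m_1 + f(n_1) + g(n_1')$ becomes the new last equation $\tilde m_k + f(\tilde n_k') + g(\tilde n_k) = m$. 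Hence $m' \sim_{(f,g)} m$. For transitivity, suppose $m \sim_{(f,g)} m'$ via a chain of length $k$ and $m' \sim_{(f,g)} m''$ via a chain of length $l$; concatenating the two lists of intermediate terms and of parameters gives a chain of length $k+l$, and the only thing to check is that the chains fit at the seam, which they do because the last equation of the first chain and the first equation of the second both equate the relevant expression to $m'$.

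The two compatibility conditions are equally direct. Given a chain witnessing $m \sim_{(f,g)} m'$ and any $p \in M$, replacing each $m_i$ by $m_i + p$ and keeping the $n_i, n_i'$ unchanged produces a chain witnessing $m + p \sim_{(f,g)} m' + p$. Given $r \in R$, replacing $m_i$ by $rm_i$, $n_i$ by $rn_i$, $n_i'$ by $rn_i'$, and invoking $f(rn_i) = rf(n_i)$, $g(rn_i') = rg(n_i')$ together with distributivity of $r$ over sums in $M$, transforms the chain into one witnessing $rm \sim_{(f,g)} rm'$. None of this is deep; the only place demanding any care is the index bookkeeping in the symmetry step (getting the reversal and the $n_i \leftrightarrow n_i'$ swap to line up) and at the junction in the transitivity step, so I expect the actual write-up to be a short routine verification.
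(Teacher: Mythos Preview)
Your proof is correct and follows essentially the same route as the paper's: the paper's proof is a terse one-paragraph sketch citing the empty chain for reflexivity, reversal for symmetry, concatenation for transitivity, and termwise addition/multiplication for the congruence conditions, while you have simply spelled out the bookkeeping in each case. The only cosmetic difference is that the paper invokes the empty chain ($k=0$) for reflexivity whereas you use $k=1$ with $m_1=m$ and $n_1=n_1'=0$; both are valid.
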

 
 \begin{proof}
 Reflexivity of $\sim_{(f,g)}$ is clear with the empty chain. Symmetry follows from the symmetry of the equivalence chains. Transitivity is obtained by concatenating two such chains. The properties of a congruence relation are obtained by adding a summand from $M$ to all terms along an equivalence chain resp. by multiplying all terms with a factor from $R$.
 \end{proof}
 
 \begin{thm}\label{congrrel7}
 Let $f,g: N \to M$ be two homomorphisms of $R$-semimo\-dules and let $\sim_{(f,g)}$ be the congruence relation introduced in \ref{congrrel3}. Then $\nu: M \to M/\simz_{(f,g)}$ is a coequalizer of $f,g$, i.e.
 \begin{enumerate}
 \item $\nu \circ f = \nu \circ g$,
 \item for all $h: M \to P$ with $h \circ f = h \circ g$ there is a unique $h': M/\simz_{(f,g)} \to P$ such that $h' \circ \nu = h$.
 \end{enumerate}
 \end{thm}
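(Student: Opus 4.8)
The plan is to mimic the proof of the earlier coequalizer theorem for $\sim_{\langle f,g\rangle}$, but now exploiting the explicit chain description of $\sim_{(f,g)}$. First, by Lemma~\ref{congrrel3} together with Proposition~\ref{congrel2}(1), the quotient $C := M/\simz_{(f,g)}$ is an $R$-semimodule and $\nu: M \to C$ is a homomorphism of $R$-semimodules; so it remains only to verify the two defining properties of a coequalizer.

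For (1), I would check that $f(n) \sim_{(f,g)} g(n)$ for every $n \in N$ by producing an explicit chain: take $k = 1$, $m_1 = 0$, $n_1 = n$, $n'_1 = 0$. The defining relations of Lemma~\ref{congrrel3} then read $f(n) = 0 + f(n) + g(0)$ and $0 + f(0) + g(n) = g(n)$, both of which hold. Hence $\nu \circ f = \nu \circ g$.

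For (2), the essential point is that any homomorphism $h: M \to P$ with $h \circ f = h \circ g$ is compatible with $\sim_{(f,g)}$, i.e.\ $m \sim_{(f,g)} m' \impl h(m) = h(m')$. Granted this, the Homomorphism Theorem~\ref{congrel2b} produces a unique homomorphism $h': C \to P$ with $h' \circ \nu = h$, and its uniqueness is automatic because $\nu$ is surjective. To prove compatibility, suppose $m \sim_{(f,g)} m'$ is witnessed by $m_1,\ldots,m_k \in M$ and $n_1,\ldots,n_k,n'_1,\ldots,n'_k \in N$ as in Lemma~\ref{congrrel3}. Apply $h$ to the first relation: $h(m) = h(m_1) + h(f(n_1)) + h(g(n'_1))$. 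Since $h \circ f = h \circ g$ we may replace $h(f(n_1))$ by $h(g(n_1))$ and $h(g(n'_1))$ by $h(f(n'_1))$, and by commutativity of $P$ the right-hand side becomes $h(m_1) + h(f(n'_1)) + h(g(n_1))$, which by the second chain relation equals $h(m_2) + h(f(n_2)) + h(g(n'_2))$. Iterating this substitution down the chain gives $h(m) = h(m_k) + h(f(n'_k)) + h(g(n_k)) = h(m')$.

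The only mildly delicate part is this final induction along the chain: at each link one must use $h \circ f = h \circ g$ to convert the ``forward'' pair $f(n_i), g(n'_i)$ into the ``backward'' pair $f(n'_i), g(n_i)$ after applying $h$, so that consecutive chain equations splice together in $P$. No real obstacle is expected here — the genuine content has already been placed in the definition of $\sim_{(f,g)}$ and in the verification of Lemma~\ref{congrrel3}.
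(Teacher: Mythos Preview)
Your proof is correct and follows essentially the same approach as the paper: both verify $f(n)\sim_{(f,g)} g(n)$ via the single-link chain $f(n)=0+f(n)+g(0)$, $0+f(0)+g(n)=g(n)$, and both establish the factorization by showing that $h\circ f=h\circ g$ forces $h(m)=h(m')$ along any $\sim_{(f,g)}$-chain through the swap $hf(n_i)+hg(n'_i)=hf(n'_i)+hg(n_i)$. The only cosmetic difference is that you invoke the Homomorphism Theorem~\ref{congrel2b} to package the well-definedness and homomorphism property of $h'$, whereas the paper defines $h'(\overline m):=h(m)$ directly and checks these by hand.
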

 
 \begin{proof} (1) For $n \in N$ we have $f(n) = 0 + f(n) + g(0)$, $0 + f(0) + g(n) = g(n)$ hence $f(n) \sim_{(f,g)} g(n)$ and $\nu f(n) = \nu g(n)$.

 (2) Given $h$ with $hf = hg$. Define $h': M/\simz_{(f,g)} \to P$ by $h'(\overline m) := h(m)$. We have to show that $m \sim_{(f,g)} m'$ implies $h(m) = h(m')$, so that $h'$ is well-defined. Let $m  \sim_{(f,g)} m'$. Then $m = m_1 + f(n_1) + g(n'_1),
     m_1 + f(n'_1) + g(n_1) = m_2 + f(n_2) + g(n'_2), 
     \ldots,
     m_{k-1} + f(n'_{k-1}) + g(n_{k-1}) = m_k + f(n_k) + g(n'_k), 
     m_k + f(n'_k) + g(n_k) = m'$. 
We apply $h$ and get $h(m) = h(m_1) + hf(n_1) + hg(n'_1) = h(m_1) + hf(n'_1) + hg(n_1) = \ldots = h(m_k) + hf(n'_k) + hg(n_k) = h(m')$. By definition we have $h'\nu = h$. $\nu$ is surjective thus $h'$ is unique. Since addition and multiplication of the equivalence classes $\overline m$ are defined on the representatives $m$, it is clear that $h'$ is a homomorphism. 
 \end{proof}
 
 \begin{thm} \label{congrrel8}
 Let $f,g: N \to M$ be two homomorphisms of $R$-semimo\-dules. Then the two congruence relations $\sim_{\langle f,g \rangle}$ as defined in \ref{congrrel4}  and  $\sim_{(f,g)}$ as defined  in \ref{congrrel3} are equal.
 \end{thm}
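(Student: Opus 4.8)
The plan is to exploit the two coequalizer theorems already proved. The Theorem following Lemma \ref{congrrel4} states that $\nu: M \to M/\simz_{\langle f,g \rangle}$ is a coequalizer of $f$ and $g$, and Theorem \ref{congrrel7} states that $\nu: M \to M/\simz_{(f,g)}$ is too. Since for any congruence relation $\sim$ the associated quotient map satisfies $m \sim m' \iff \nu(m) = \nu(m')$ (see the discussion in \ref{partition} and Proposition \ref{congrel2}), it is enough to prove the two inclusions $\sim_{(f,g)} \subseteq \sim_{\langle f,g \rangle}$ and $\sim_{\langle f,g \rangle} \subseteq \sim_{(f,g)}$ between subsets of $M \times M$; I would prove these directly rather than go through an explicit isomorphism of the quotients.

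For the inclusion $\sim_{(f,g)} \subseteq \sim_{\langle f,g \rangle}$, suppose $m \sim_{(f,g)} m'$ and let $h: M \to P$ be an arbitrary homomorphism of $R$-semimodules with $h \circ f = h \circ g$. Applying $h$ term by term along the defining chain of Lemma \ref{congrrel3} and using $hf(n_i) = hg(n_i)$ and $hf(n'_i) = hg(n'_i)$ — exactly the computation carried out in the proof of Theorem \ref{congrrel7}(2) — yields $h(m) = h(m')$. As $h$ was arbitrary among such homomorphisms, the definition in Lemma \ref{congrrel4} gives $m \sim_{\langle f,g \rangle} m'$.

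For the reverse inclusion, suppose $m \sim_{\langle f,g \rangle} m'$. Specialize the quantifier in the definition of $\sim_{\langle f,g \rangle}$ to $P := M/\simz_{(f,g)}$ and $h := \nu: M \to M/\simz_{(f,g)}$; this is legitimate because $\nu \circ f = \nu \circ g$ by Theorem \ref{congrrel7}(1). Then $\nu(m) = \nu(m')$, which by construction of $M/\simz_{(f,g)}$ means precisely $m \sim_{(f,g)} m'$. Combining the two inclusions gives the claimed equality.

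There is no real obstacle here: once the two coequalizer results are in place, the argument is a two-line check. The only point requiring a moment's attention is that $\sim_{\langle f,g \rangle}$ is defined by quantifying over \emph{all} homomorphisms into \emph{all} $R$-semimodules, which is exactly what licenses plugging in $h = \nu$ for the particular target $M/\simz_{(f,g)}$ and makes the harder-looking inclusion immediate. Alternatively, one may simply invoke the uniqueness of coequalizers up to a unique isomorphism over $M$ to identify $M/\simz_{\langle f,g \rangle}$ with $M/\simz_{(f,g)}$ and read off equality of the two kernel pairs, but the direct inclusions are shorter.
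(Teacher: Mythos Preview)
Your proof is correct. The first inclusion is argued exactly as in the paper. For the reverse inclusion you specialize the universal quantifier in the definition of $\sim_{\langle f,g\rangle}$ to $h=\nu:M\to M/\simz_{(f,g)}$, which is a clean direct argument; the paper instead observes that the induced factorization between the two coequalizers $M/\simz_{(f,g)}$ and $M/\simz_{\langle f,g\rangle}$ is an isomorphism and deduces equality of the congruences from that --- precisely the alternative you mention in your final sentence. The two arguments are essentially equivalent, yours being marginally more self-contained since it avoids appealing to uniqueness of coequalizers.
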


 \begin{proof}
 Let $m \sim_{(f,g)} m'$. Let $h: M \to P$ be a homomorphism with $hf = hg$, then one shows as in the proof of Theorem \ref{congrrel7}, that $h(m) = h(m')$ hence $m \sim_{\langle f,g \rangle} m'$. Since the factorization homomorphism between the two coequalizers  $M/\sim_{(f,g)}$ and $M/\sim_{\langle f,g \rangle}$ is an isomorphism, we get that the two congruence relations are the same.
 \end{proof}

 \begin{eg} \label{congrrel9}
 As above we consider the two homomorphisms $4\cdot, 6\cdot: \N_0 \to \N_0$. Then by closer examination of the congruence relation $\sim_{(f,g)}$ we get
 $$\N_0/\simz_{(f,g)} = \{ \bar 0, \bar 1, \bar 2, \bar 3, \bar 4, \bar 5 \}.$$
 This together with Example \ref{egcongrel1} shows that the congruence relation in Lemma \ref{congrrel6} does not furnish the coequalizer. The addition in this semimodule is given by 
 $$\begin{tabular}{||c||c|c|c|c|c|c||} \hline \hline
 +        & $\bar 0$ & $\bar 1$ & $\bar 2$ & $\bar 3$ & $\bar 4$ & $\bar 5$\\ \hline \hline
 $\bar 0$ & $\bar 0$ & $\bar 1$ & $\bar 2$ & $\bar 3$ & $\bar 4$ & $\bar 5$\\ \hline 
 $\bar 1$ & $\bar 1$ & $\bar 2$ & $\bar 3$ & $\bar 4$ & $\bar 5$ & $\bar 4$\\ \hline 
 $\bar 2$ & $\bar 2$ & $\bar 3$ & $\bar 4$ & $\bar 5$ & $\bar 4$ & $\bar 5$\\ \hline 
 $\bar 3$ & $\bar 3$ & $\bar 4$ & $\bar 5$ & $\bar 4$ & $\bar 5$ & $\bar 4$\\ \hline 
 $\bar 4$ & $\bar 4$ & $\bar 5$ & $\bar 4$ & $\bar 5$ & $\bar 4$ & $\bar 5$\\ \hline 
 $\bar 5$ & $\bar 5$ & $\bar 4$ & $\bar 5$ & $\bar 4$ & $\bar 5$ & $\bar 4$\\ \hline \hline
 \end{tabular}$$
 \end{eg}

 \begin{lma}
 Let $\sim$ be a congruence relation on $M$, let $p_1, p_2: \Rel_\sim \to M$ be as in Definition \ref{congrel5}, and let $\sim_{(p_1,p_2)}$ be as in \ref{congrrel3}. Then
 $$ m \sim m' \impl m \sim_{(p_1,p_2)} m'$$
 for all $m, m' \in M$.
 \end{lma}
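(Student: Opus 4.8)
The plan is to exhibit, for any $m,m'\in M$ with $m\sim m'$, an explicit equivalence chain of length $k=1$ witnessing $m\sim_{(p_1,p_2)}m'$ in the sense of Lemma \ref{congrrel3}, applied with $N=\Rel_\sim$, $f=p_1$, $g=p_2$. First recall from Definition \ref{congrel5} that $\Rel_\sim$ consists exactly of the pairs $(a,a')\in M\times M$ with $a\sim a'$, and that $p_1,p_2$ are the two coordinate projections; by Proposition \ref{congrel6} these are $R$-semimodule homomorphisms, so Lemma \ref{congrrel3} does apply to them.

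Since $\sim$ is reflexive we have $(0,0)\in\Rel_\sim$, and the hypothesis $m\sim m'$ says precisely that $(m,m')\in\Rel_\sim$. Now take the one-step data $m_1:=0\in M$, $n_1:=(m,m')\in\Rel_\sim$, $n'_1:=(0,0)\in\Rel_\sim$. The two defining equations of Lemma \ref{congrrel3} for $k=1$ become
$$m = m_1 + p_1(n_1) + p_2(n'_1) = 0 + m + 0 = m$$
and
$$m_1 + p_1(n'_1) + p_2(n_1) = 0 + 0 + m' = m',$$
both of which hold trivially. Hence $m\sim_{(p_1,p_2)}m'$, which is the assertion.

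I do not expect any real obstacle here: the statement is pure bookkeeping once the correct witnesses are chosen, namely, the nontrivial pair $(m,m')$ goes into the $n$-slot, the zero pair $(0,0)$ into the $n'$-slot, and the free intermediate term $m_1$ is set to $0$. As an alternative one could argue abstractly: $\nu\colon M\to M/\simz$ coequalizes $p_1,p_2$ because $p_1(x)\sim p_2(x)$ for every $x\in\Rel_\sim$, and by the Homomorphism Theorem \ref{congrel2b} any $h$ with $h p_1=h p_2$ factors uniquely through $\nu$; so $M/\simz$ is a coequalizer of $p_1,p_2$, and comparing it with the coequalizer $M/\simz_{(p_1,p_2)}$ of Theorem \ref{congrrel7} shows the two congruence relations coincide, which gives in particular $m\sim m'\Rightarrow m\sim_{(p_1,p_2)}m'$. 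The direct chain above is shorter, so I would use that.
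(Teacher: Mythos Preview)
Your proof is correct and is essentially identical to the paper's own argument: the paper also takes the single-step witnesses $m_1=0$, $n_1=(m,m')$, $n'_1=(0,0)$ and writes out the same two equalities $m = 0 + p_1(m,m') + p_2(0,0)$ and $0 + p_1(0,0) + p_2(m,m') = m'$. Your alternative coequalizer argument is a nice observation but, as you note, superfluous here.
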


 \begin{proof} We have $m \sim m' \impl (m,m') \in \Rel_\sim \impl$ 
 $m = 0 + p_1(m,m') + p_2(0,0), 0 + p_1(0,0) + p_2(m,m') = m' \impl m \sim_{(p_1,p_2)} m'$. 
 \end{proof}

 \begin{lma}\label{pairhomo_cong} 
 Let $f,g: N \to M$ be homomorphisms of $R$-semimo\-dules and let $\sim$ be a congruence relation on $M$. Assume 
 $$f(n) \sim g(n) \mbox{ for all }n \in N.$$
 Then there is a unique homomorphism $h: N \to Rel_\sim$ such that the diagram
 $$\bfig 
 \putmorphism(480, 500)(0, -1)[``f]{500}1l 
 \putmorphism(500, 500)(0, -1)[N``]{500}0r
 \putmorphism(520, 500)(0, -1)[``g]{500}1r
 \putmorphism(0, 20)(1, 0)[\phantom{\Rel_\sim}`\phantom{M}`p_1]{500}1a
 \putmorphism(0, 0)(1, 0)[\Rel_\sim`M`]{500}0a
 \putmorphism(0, -20)(1, 0)[\phantom{\Rel_\sim}`\phantom{M}`p_2]{500}1b
 \putmorphism(500, 500)(-1, -1)[``h]{500}1l
 \efig$$
 commutes. 
 \end{lma}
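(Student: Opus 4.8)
The plan is to write down the only possible candidate for $h$ and then check that it works. Since the diagram forces $p_1 \circ h = f$ and $p_2 \circ h = g$, and since by the observation in Definition \ref{congrel5} every element $x \in \Rel_\sim$ satisfies $x = (p_1(x), p_2(x))$, we are compelled to define
$$h: N \ni n \mapsto (f(n), g(n)) \in M \times M.$$
This observation also disposes of uniqueness immediately: if $h'$ is any homomorphism making the diagram commute, then for each $n \in N$ we get $h'(n) = (p_1(h'(n)), p_2(h'(n))) = (f(n), g(n)) = h(n)$.

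The first thing to verify is that $h$ is well-defined, i.e.\ that $(f(n), g(n))$ really lies in $\Rel_\sim$ and not just in $M \times M$. This is precisely the hypothesis $f(n) \sim g(n)$ for all $n \in N$ together with the definition $\Rel_\sim = \{(m,m') \in M \times M \mid m \sim m'\}$. Next I would check that $h$ is a homomorphism of $R$-semimodules. By Proposition \ref{congrel6} the set $\Rel_\sim$ is an $R$-subsemimodule of $M \times M$, so its operations are those of $M \times M$ restricted, namely componentwise. Hence $h(n + n') = (f(n+n'), g(n+n')) = (f(n)+f(n'), g(n)+g(n')) = h(n) + h(n')$ and $h(rn) = (f(rn), g(rn)) = (rf(n), rg(n)) = r\,h(n)$, using that $f$ and $g$ are homomorphisms; also $h(0) = (0,0) = 0$. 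Finally, commutativity of the two triangles is built into the definition, since $p_1(f(n), g(n)) = f(n)$ and $p_2(f(n), g(n)) = g(n)$.

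There is no genuine obstacle in this argument; the only point worth flagging is that one must invoke Proposition \ref{congrel6} (equivalently, item (2) of Proposition \ref{congrel6}) to know that $\Rel_\sim$ carries the componentwise $R$-semimodule structure and that $p_1, p_2$ are homomorphisms, so that $h$ genuinely lives in $R\xsMod$ and the diagram is a diagram of $R$-semimodule homomorphisms rather than merely of maps.
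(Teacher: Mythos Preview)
Your proof is correct and matches the paper's own proof essentially line for line: the paper also defines $h(n) := (f(n), g(n))$, verifies membership in $\Rel_\sim$, checks additivity and $R$-linearity componentwise, confirms $p_ih = f, g$, and proves uniqueness via $\psi(n) = (p_1\psi(n), p_2\psi(n))$. Your explicit citation of Proposition~\ref{congrel6} for the subsemimodule structure on $\Rel_\sim$ is a nice touch that the paper leaves implicit.
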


 \begin{proof}
 Define $h(n) := (f(n), g(n))$. Then $h(n) \in \Rel_\sim$. Furthermore $h(n + n') = (f(n + n'), g(n + n')) = (f(n) + f(n'), g(n) + g(n')) = (f(n), g(n)) + (f(n'), g(n')) = h(n) + h(n')$ and similarly $h(rn) = rh(n)$. We also have $p_1h(n) = f(n)$ and $p_2h(n) = g(n)$. If $p_1 \psi = f$ and $p_2 \psi = g$, then $\psi(n) = (p_1\psi(n), p_2\psi(n)) = (f(n), g(n)) = h(n)$ hence $\psi = h$.
 \end{proof}

 \begin{thm} 
 Let $f: M \to N$ be a homomorphisms of $R$-semimo\-dules and let $\sim\ := \sim_f$ be the congruence relation introduced in Lemma \ref{congrel2a}. Then $p_1, p_2: \Rel_\sim \to M$ is a kernel pair of $f$, i.e.
 \begin{enumerate}
 \item $f p_1 = f p_2$,
 \item for all $g, h: P \to M$ with $f g = f h$ there is a unique $\mu: P \to \Rel_\sim$ such that $p_1\mu = g$ and $p_2\mu = h$,
  $$\bfig 
 \putmorphism(480, 500)(0, -1)[\phantom{P}`\phantom{M}`g]{500}1l 
 \putmorphism(500, 500)(0, -1)[P`\phantom{M}`]{500}0r
 \putmorphism(520, 500)(0, -1)[\phantom{P}`\phantom{M}`h]{500}1r
 \putmorphism(0, 20)(1, 0)[\phantom{\Rel_\sim}`\phantom{M}`p_1]{500}1a
 \putmorphism(0, 0)(1, 0)[\Rel_\sim`M`]{500}0a
 \putmorphism(0, -20)(1, 0)[\phantom{\Rel_\sim}`\phantom{M}`p_2]{500}1b
 \putmorphism(500, 500)(-1, -1)[\phantom{P}`\phantom{\Rel_\sim}`\mu]{500}1l
 \putmorphism(500, 0)(1, 0)[\phantom{M}`N`f]{500}1a
 \efig.$$
 \end{enumerate}
 \end{thm}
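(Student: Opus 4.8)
The plan is to verify the two defining properties of a kernel pair directly, leaning on the structural description of $\Rel_{\sim_f}$ that is already available. By Lemma~\ref{congrel2a} the relation $\sim_f$ is a congruence relation on $M$, so by Proposition~\ref{congrel6} the set $\Rel_{\sim_f}$ is an $R$-subsemimodule of $M\times M$ and $p_1,p_2\colon \Rel_{\sim_f}\to M$ are homomorphisms of $R$-semimodules. In particular addition and the $R$-action on $\Rel_{\sim_f}$ are computed componentwise.

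For part (1), take any element of $\Rel_{\sim_f}$; it has the form $(m,m')$ with $m\sim_f m'$, i.e.\ $f(m)=f(m')$ by the very definition of $\sim_f$. Then $fp_1(m,m')=f(m)=f(m')=fp_2(m,m')$, so $fp_1=fp_2$. For part (2), suppose $g,h\colon P\to M$ satisfy $fg=fh$. For each $x\in P$ we then have $f(g(x))=f(h(x))$, that is $g(x)\sim_f h(x)$, so the pair $(g(x),h(x))$ lies in $\Rel_{\sim_f}$; this is the one place the coequalizing hypothesis is used. Define $\mu\colon P\to\Rel_{\sim_f}$ by $\mu(x):=(g(x),h(x))$. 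Since addition and the $R$-action on $\Rel_{\sim_f}$ are componentwise and $g,h$ are homomorphisms, $\mu$ is a homomorphism of $R$-semimodules --- exactly the computation already carried out in the proof of Lemma~\ref{pairhomo_cong}. By construction $p_1\mu=g$ and $p_2\mu=h$.

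It remains to prove uniqueness. If $\psi\colon P\to\Rel_{\sim_f}$ also satisfies $p_1\psi=g$ and $p_2\psi=h$, then for every $x\in P$ the identity $y=(p_1(y),p_2(y))$, valid for all $y\in\Rel_{\sim_f}$ and recorded in Definition~\ref{congrel5}, gives $\psi(x)=(p_1\psi(x),p_2\psi(x))=(g(x),h(x))=\mu(x)$, hence $\psi=\mu$. I do not anticipate any genuine obstacle: the argument is essentially the one for Lemma~\ref{pairhomo_cong}, with $\sim$ specialized to $\sim_f$ and the condition ``$f(n)\sim g(n)$'' there replaced by ``$f(g(x))=f(h(x))$'' here. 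The only points requiring a little care are invoking Proposition~\ref{congrel6} so that ``componentwise homomorphism'' is meaningful, and noticing that $\mu$ genuinely lands in $\Rel_{\sim_f}$ precisely because $fg=fh$.
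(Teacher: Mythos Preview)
Your proof is correct and is precisely the straightforward verification the paper has in mind; the paper's own proof consists of the single word ``Straightforward.'' Your argument is essentially the specialization of Lemma~\ref{pairhomo_cong} to $\sim\,=\,\sim_f$, together with the observation that the hypothesis $f(n)\sim g(n)$ there is guaranteed here by $fg=fh$, which is exactly how the authors would expect the reader to fill in the details.
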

 
 \begin{proof} Straightforward. \end{proof}

Observe two known facts from category theory.
 \begin{enumerate} 
 \item The kernel pair of a homomorphism $f$ is the kernel pair of its coequalizer. 
 \item The coequalizer of a pair of homomorphisms $f,g$ is the coequalizer of its kernel pair.
 \end{enumerate}
 
 \subsection{\bf Congruence relations and subsemimodules}\hskip-3mm. 
 %----------------------------------------------------------
 Let $K \subseteq M$ be an $R$-subsemimodule. Let furthermore $\iota: K \to M$ and $0: K \to M$ be the inclusion respectively the zero homomorphisms and apply to them the congruence relation defined in Lemma \ref{congrrel3}. It is easy to check that the congruence relation $m \simz_{(\iota,0)} m'$ can be simplified to \\
 $\phantom{XXXX}m \sim_K m' :\iff \exists a, b \in K : m+a = m'+b$\\
 called the Bourne relation in the literature \cite{JA1}. 
 
 \begin{prop} \label{congrel4}
 \begin{enumerate} 
 \item If $\ \sim\ $ is a congruence relation on $M$, then $K := \overline 0 = \{m \in M|m \sim 0\}$ is an 
$R$-subsemimodule of $M$.
 
 \item Let $K \subseteq M$ be an $R$-subsemimodule. Then
   $$m \sim_K m' :\iff \exists a, b \in K : m+a = m'+b$$
is a congruence relation. We write $M/K := M/{\sim_K}$. 
The quotient map $\nu:M \to M/{\sim_K}$ satisfies $\nu (K) = \{0\}$ and is a cokernel of $\iota$. Moreover, $\sim_K$ is the smallest congruence relation on $M$ whose associated quotient map sends $K$ to $\{0\}$. 
 \end{enumerate}
 \end{prop}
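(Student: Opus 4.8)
The plan is to handle the two parts separately, reusing the coequalizer machinery of Lemma~\ref{congrrel3} and Theorem~\ref{congrrel7} wherever it shortens the work.

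For (1), let $\sim$ be a congruence relation and $K = \overline 0$. Reflexivity gives $0 \in K$. If $m, m' \in K$, then from $m \sim 0$ the congruence law gives $m + m' \sim 0 + m' = m'$, and since $m' \sim 0$, transitivity yields $m + m' \sim 0$, so $m + m' \in K$; and $m \sim 0$ gives $rm \sim r\cdot 0 = 0$, so $rm \in K$. Hence $K$ is an $R$-subsemimodule.

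For (2), the efficient route is the observation already recorded just before the statement: $m \sim_K m'$ is precisely the relation $\sim_{(\iota,0)}$ obtained by feeding the inclusion $\iota: K \to M$ and the zero map $0: K \to M$ into the construction of Lemma~\ref{congrrel3}, the point being that $\iota$ and $0$ have common domain $K$ and $K$ is closed under addition, so every equivalence chain collapses to a single step $m + a = m' + b$ with $a, b \in K$. Granting this, Lemma~\ref{congrrel3} tells us $\sim_K$ is a congruence relation and Theorem~\ref{congrrel7} tells us $\nu: M \to M/\sim_K$ is a coequalizer of $\iota$ and $0$, i.e.\ a cokernel of $\iota$. (If one prefers, all of this is also a quick direct check: reflexivity from $m + 0 = m + 0$; symmetry by swapping $a, b$; transitivity by adding the defining equations $m + a = m' + b$ and $m' + c = m'' + d$ to obtain $m + (a + c) = m'' + (b + d)$; and the two congruence laws by adding a fixed $n \in M$ to $m + a = m' + b$, respectively by multiplying it through by $r \in R$ and invoking closure of $K$ under the action.) That $\nu(K) = \{0\}$ follows from $a + 0 = 0 + a$ for $a \in K$; and for the cokernel universal property one observes that any homomorphism $h: M \to P$ with $h(K) = \{0\}$ sends $m \sim_K m'$, say $m + a = m' + b$, to $h(m) = h(m) + h(a) = h(m + a) = h(m' + b) = h(m')$, so $h$ factors through $\nu$, uniquely because $\nu$ is surjective.

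It remains to prove minimality. If $\approx$ is a congruence relation on $M$ whose quotient map sends $K$ to $\{0\}$ — equivalently, $a \approx 0$ for every $a \in K$ — then given $m + a = m' + b$ with $a, b \in K$, the congruence law gives $m \approx m + a = m' + b \approx m'$, whence $\sim_K$ is contained in $\approx$. Thus $\sim_K$ is the smallest congruence relation killing $K$. None of this presents a serious obstacle; the only place that asks for care is justifying the collapse of $\sim_{(\iota,0)}$ to the one-step Bourne relation (merging consecutive links of a chain, using the common domain and the additive closure of $K$) — or, in the direct argument, making sure the auxiliary elements stay inside $K$ throughout the transitivity and congruence computations.
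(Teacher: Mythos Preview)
Your proof is correct and follows the paper's approach: the paper dismisses (1) as ``Straightforward'' and for (2) says only ``It is easy to verify that $\sim_K$ is a congruence relation whose quotient map $\nu$ satisfies $\nu(K)=\{0\}$,'' then gives the minimality argument exactly as you do (from $a\approx 0$ deduce $m\approx m+a=m'+b\approx m'$). One small slip worth fixing: in your direct transitivity check, literally ``adding the defining equations'' $m+a=m'+b$ and $m'+c=m''+d$ would leave an uncancellable $m'$ on both sides; the correct move is substitution, $m+(a+c)=(m+a)+c=(m'+b)+c=(m'+c)+b=(m''+d)+b=m''+(b+d)$, which yields the equation you state.
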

 
 \begin{proof} (1) Straightforward.\\
 (2) It is easy to verify that $\sim_K$ is a congruence relation whose quotient map $\nu $ satisfies $\nu (K) = \{0\}$. Let $\sim$ be a congruence relation on $M$ whose quotient map sends $K$ to $\{0\}$. Then $a \sim 0$ for all $a \in K$, whence $m \sim m + a$ for all $m \in M, a \in K$. Assume $m \sim_K m'$ for $m,m' \in M$. Then there are $a, b \in K$ such that $m + a = m' + b$. This implies $m \sim m+a = m'+b \sim m'$ and thus $\sim$ contains $\sim_K$.
 \end{proof}
 
 \begin{prop} \label{congrel4a}
 \begin{enumerate}
 \item Let $\sim$ be any congruence relation on $M$, let $K:= \overline 0$ and let $\sim_K$ be the congruence relation defined by $K$ as in \ref{congrel4} (2). Then for all $m, m' \in M$ 
   $$m \sim_K m' \impl m \sim m'.$$
   
 \item Let $K \subseteq M$ be an $R$-subsemimodule, $\sim_K$ be defined by $K$ and $L:= \overline 0$ the equivalence class of $0$ w.r.t. $\sim_K$. Then
    $$K \subseteq L.$$
 \end{enumerate}
 \end{prop}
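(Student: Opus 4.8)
The plan is to derive both statements directly from Proposition \ref{congrel4}, which already packages the relevant minimality and surjectivity facts; essentially nothing new needs to be computed.

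For part (1), I would first record that $K := \overline 0$ is an $R$-subsemimodule of $M$ by Proposition \ref{congrel4}(1), so that the congruence relation $\sim_K$ of Proposition \ref{congrel4}(2) is legitimately defined. Next observe that the quotient map $\nu : M \to M/\simz$ associated with the given congruence relation $\sim$ sends $K$ to $\{0\}$: indeed every $a \in K = \overline 0$ satisfies $a \sim 0$, hence $\nu(a) = \nu(0) = 0$. Since Proposition \ref{congrel4}(2) asserts that $\sim_K$ is the \emph{smallest} congruence relation whose quotient map kills $K$, it follows that $\sim_K$ is contained in $\sim$, which is precisely the implication $m \sim_K m' \impl m \sim m'$. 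Unwound, this is just the one-line computation: if $m + a = m' + b$ with $a, b \in K$, then $a \sim 0$ and $b \sim 0$ give $m \sim m + a = m' + b \sim m'$.

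For part (2), I would argue directly and elementarily. Let $a \in K$. Since $K$ is an $R$-subsemimodule we have $0 \in K$, and the trivial identity $a + 0 = 0 + a$ exhibits $a \sim_K 0$ by the very definition of $\sim_K$ (take the two required elements of $K$ to be $0$ and $a$). Hence $a \in \overline 0 = L$, and as $a \in K$ was arbitrary, $K \subseteq L$. This is equivalently the statement $\nu(K) = \{0\}$ already noted in Proposition \ref{congrel4}(2).

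There is no real obstacle here; the only point requiring care is the order of invocation in part (1) — one must cite Proposition \ref{congrel4}(1) to know $K = \overline 0$ is a subsemimodule before the congruence $\sim_K$ and the minimality clause of Proposition \ref{congrel4}(2) can be applied to it.
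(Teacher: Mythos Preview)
Your proposal is correct and follows exactly the straightforward line of reasoning the paper has in mind (the paper's own proof is simply ``Straightforward''). Both the minimality argument via Proposition~\ref{congrel4}(2) and the direct unwinding you give for part~(1), as well as the elementary verification of $a \sim_K 0$ for part~(2), are precisely what is intended.
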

 
 \begin{proof} Straightforward. \end{proof}

 \begin{prop}
 Let $f: M \to N$ be a homomorphism of $R$-semimodules and $\sim_f$ be the congruence relation as defined in \ref{congrel2a}. Then $f^{-1}(0)$ is an $R$-subsemimodule 
of $M$. For each $m \in M$ we have $m + f^{-1}(0) \subseteq \overline m$. Equality does not hold
in general.
 \end{prop}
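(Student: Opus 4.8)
The plan is to dispatch the three assertions separately. The first two are immediate from the defining properties of a homomorphism of $R$-semimodules, so the real content is the last, negative, claim, which I would settle by exhibiting a concrete example.

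\emph{$f^{-1}(0)$ is an $R$-subsemimodule of $M$.} I would just verify the two closure conditions. If $n, n' \in f^{-1}(0)$ then $f(n + n') = f(n) + f(n') = 0 + 0 = 0$, so $n + n' \in f^{-1}(0)$; and if $n \in f^{-1}(0)$ and $r \in R$ then $f(rn) = r f(n) = r \cdot 0 = 0$, so $rn \in f^{-1}(0)$. (Also $0 \in f^{-1}(0)$, since $f(0) = 0$.) For the inclusion $m + f^{-1}(0) \subseteq \overline m$, fix $m \in M$ and $k \in f^{-1}(0)$; then $f(m + k) = f(m) + f(k) = f(m)$, so $m + k \sim_f m$ by the definition of $\sim_f$ in Lemma \ref{congrel2a}, i.e. $m + k \in \overline m$. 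Letting $k$ range over $f^{-1}(0)$ gives the assertion.

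\emph{Equality can fail.} Here I would take $R = \N_0$, $M = \N_0$, and $f \colon \N_0 \to \N_0$ the zero homomorphism. Then $f^{-1}(0) = \N_0$, so $m + f^{-1}(0) = \{\, n \in \N_0 \mid n \geq m \,\}$, a proper subset of $\N_0$ whenever $m \geq 1$; but $\sim_f$ identifies all elements of $M$, so $\overline m = \N_0$ for every $m$. In particular $1 + f^{-1}(0) = \{1, 2, 3, \dots\} \subsetneq \N_0 = \overline 1$. If one wants a non-degenerate witness, the monoid homomorphism $\N_0 \to \{0,1\}$ with $0 \mapsto 0$ and $n \mapsto 1$ for $n \geq 1$, into the two-element commutative monoid with $1 + 1 = 1$, works equally well: there $f^{-1}(0) = \{0\}$, hence $1 + f^{-1}(0) = \{1\}$, whereas $\overline 1 = \{1, 2, 3, \dots\}$.

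I do not expect any genuine obstacle: the subsemimodule and inclusion parts use only that $f$ is additive, $R$-linear, and sends $0$ to $0$, and the negative part is finished by the example above. The only thing worth noticing is that $f^{-1}(0)$ can be as small as a single point while $\sim_f$ is still coarse — which is precisely what the two-element-monoid example displays, and which explains why the inclusion is in general strict.
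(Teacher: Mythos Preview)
Your argument is correct and is precisely the routine verification the paper has in mind: the paper's own proof consists of the single word ``Straightforward'', so there is no more detailed argument to compare against. Your explicit counterexamples (the zero map on $\N_0$, and the map onto the two-element absorbing monoid) are a welcome addition, since the paper asserts the failure of equality without naming a witness.
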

 
 \begin{proof} Straightforward. \end{proof}

 \begin{prop}
 Let $K = f^{-1}(0)$ and $\sim_K$ be the congruence relation defined by $K$ as in \ref{congrel4} (2). Then $m \sim_K m' 
\impl m \sim_f m'$ for all $m, m' \in M$. So we get a surjective homomorphism of $R$-semimodules 
$\mu: M/K \to M/\simz_f$.
 \end{prop}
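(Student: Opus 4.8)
The plan is to establish the displayed implication by a single computation with $f$, and then to obtain $\mu$ as a consequence of the Homomorphism Theorem.

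First I would prove $m \sim_K m' \impl m \sim_f m'$. By the description of $\sim_K$ in Proposition \ref{congrel4}(2), $m \sim_K m'$ means that there are $a,b \in K = f^{-1}(0)$ with $m + a = m' + b$. Applying the homomorphism $f$ to both sides and using that $f$ preserves addition together with $f(a) = f(b) = 0$ (because $a,b \in f^{-1}(0)$), I get
$$f(m) = f(m) + f(a) = f(m+a) = f(m'+b) = f(m') + f(b) = f(m').$$
By the definition of $\sim_f$ in Lemma \ref{congrel2a}, this says precisely $m \sim_f m'$. Thus $\sim_K$ is contained in $\sim_f$ as relations on $M$.

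Next, to produce $\mu$, recall that the quotient map $\nu_f : M \to M/\simz_f$ is a homomorphism of $R$-semimodules (Proposition \ref{congrel2}(1)), and that by the implication just shown $m \sim_K m' \impl \nu_f(m) = \nu_f(m')$ for all $m,m' \in M$. Hence the Homomorphism Theorem \ref{congrel2b}, applied with the congruence relation $\sim_K$ and the homomorphism $\nu_f$ in place of $\sim$ and $f$, yields a unique homomorphism $\mu : M/K \to M/\simz_f$ of $R$-semimodules such that $\mu \circ \nu_K = \nu_f$, where $\nu_K : M \to M/K$ is the quotient map.

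Finally, surjectivity of $\mu$ is immediate: $\nu_f$ is surjective and factors as $\nu_f = \mu \circ \nu_K$, so $\mu$ is surjective. There is essentially no obstacle in this argument; the only point requiring care is matching the defining data ($K = f^{-1}(0)$ and the Bourne-type description of $\sim_K$) with the hypothesis of the Homomorphism Theorem, i.e.\ verifying the inclusion $\sim_K\, \subseteq\, \sim_f$, which is exactly the computation carried out in the first step.
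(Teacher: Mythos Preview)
Your proof is correct and is precisely the ``straightforward'' argument the paper has in mind: the paper's own proof consists of the single word ``Straightforward,'' and what you wrote is the natural unpacking of that word --- the direct computation $f(m) = f(m+a) = f(m'+b) = f(m')$ for the implication, followed by the Homomorphism Theorem~\ref{congrel2b} applied to $\nu_f$ and $\sim_K$ to produce $\mu$, with surjectivity read off from the factorization $\nu_f = \mu \circ \nu_K$.
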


 \begin{proof} Straightforward. \end{proof}

  %=============================================================
 \section{Products and Coproducts of Semimodules}
 %-------------------------------------------------------------------

 \begin{defn}
 \begin{enumerate}
 \item Let $(M_i | i \in I)$ be a family of $R$-semimodules. An 
$R$-semimodule $\prod M_i$ together with a family
$(p_j: \prod M_i \to M_j | j \in I)$ of homomorphisms is a 
{\em product} of the $M_i$, if for every $R$-semimodule $N$ 
and every family of homomorphisms $(f_j : N \to M_j | j \in I)$ 
there is a unique homomorphism $f: N \to \prod M_i$ such that 
 $$ \btriangle[N`\prod M_i`M_j;f`f_j`p_j] $$ 
commutes for all  $j \in I$.

 \item Let $(M_i | i \in I)$ be a family of $R$-semimodules. An 
$R$-semimodule $\coprod M_i$ together with a family
$(\iota_j: M_j \to \coprod M_i | j \in I)$ of homomorphisms is a 
{\em coproduct} of the $M_i$, if for every $R$-semimodule $N$ 
and every family of homomorphisms $(f_j : M_j \to N | j \in I)$ 
there is a unique homomorphism $f: \coprod M_i \to N$ such that 
  $$ \qtriangle[M_j`\coprod M_i`N;\iota_j`f_j`f] $$
commutes for all  $j \in I$.
 \end {enumerate}
 \end{defn}

 \begin{lma}
 Products and coproducts are unique up to isomorphism.
 \end{lma}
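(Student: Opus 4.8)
The plan is to run the standard uniqueness argument for objects defined by a universal property, invoking not only the existence but also the uniqueness clause of that property. I would carry out the product case in detail and then observe that the coproduct case is the same argument with every arrow reversed.

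First I would let $(P,(p_j \mid j \in I))$ and $(P',(p'_j \mid j \in I))$ be two products of the family $(M_i \mid i \in I)$. Applying the universal property of $P$ to the cone $(p'_j : P' \to M_j \mid j \in I)$ gives a unique homomorphism $f : P' \to P$ with $p_j f = p'_j$ for all $j$; symmetrically, the universal property of $P'$ applied to $(p_j : P \to M_j \mid j \in I)$ gives a unique homomorphism $g : P \to P'$ with $p'_j g = p_j$ for all $j$. Then $fg : P \to P$ satisfies $p_j(fg) = p'_j g = p_j$, and $\id_P$ satisfies $p_j \id_P = p_j$ as well; by the uniqueness part of the universal property of $P$ applied to its own defining cone $(p_j)$, these two homomorphisms must coincide, so $fg = \id_P$. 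The mirror-image computation yields $gf = \id_{P'}$. Hence $f$ is an isomorphism with inverse $g$, and it is moreover the unique isomorphism $P' \to P$ commuting with the projections, which is the precise sense of uniqueness.

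For the coproduct statement I would apply the dual argument verbatim: take two coproducts $(Q,(\iota_j))$ and $(Q',(\iota'_j))$, use the respective universal properties to obtain $f : Q \to Q'$ and $g : Q' \to Q$ with $f\iota_j = \iota'_j$ and $g\iota'_j = \iota_j$ for all $j$, and conclude $gf = \id_Q$ and $fg = \id_{Q'}$ from the uniqueness clauses. I do not anticipate any genuine obstacle; the one point to be careful about is that the equalities $fg = \id$ and $gf = \id$ rest on the \emph{uniqueness} assertion in the universal property --- merely having a factorization is not enough, one must know that the factorization of the defining cone through itself is the identity.
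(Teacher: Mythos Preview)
Your argument is correct and is precisely the standard universal-property argument that the paper has in mind; the paper's own proof consists of the single word ``Straightforward.'' There is nothing to add.
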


 \begin{proof} Straightforward.
 \end{proof}

 \begin {thm} {\em (Rules of computation in products of semimodules.)} 
 Let $(\prod M_i, (p_j))$ be a product of the family of $R$-semimodules 
$(M_i)_{i \in I}$. Let $(a_i \in M_i | i \in I)$ be a family of elements. 
Then there is a unique $a \in \prod M_i$, such that $p_i (a) = a_i$ for 
all $i \in I$. If $(b_i \in M_i| i \in I)$ is another family with $b \in 
\prod M_i$ and $p_i (b) = b_i$, then $a+b$ is the unique element in 
$\prod M_i$ such that $p_i (a+b) = a_i+b_i$ for all $i \in I$. Furthermore 
$ra$ is the unique element in $\prod M_i$ such that $p_i (ra) = ra_i$ 
for all $i \in I$.
 
 For each $a \in \prod M_i$ there is a unique family 
$(a_i |i \in I)$ with $p_i(a) = a_i.$ 
 \end {thm}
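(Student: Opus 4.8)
The plan is to reduce everything to the universal property of the product together with the elementary observation that, for any $R$-semimodule $M$, the elements of $M$ are in bijection with the homomorphisms ${}_RR \to M$, where ${}_RR$ denotes $R$ viewed as a left $R$-semimodule over itself via multiplication. To an element $m \in M$ one associates the map $\rho_m : {}_RR \to M$, $\rho_m(r) := rm$; the semimodule axioms (distributivity of $R$ over $M$, $0\cdot m = 0$, and associativity $(rr')m = r(r'm)$) show that $\rho_m$ is a homomorphism, and conversely every homomorphism $f : {}_RR \to M$ satisfies $f(r) = f(r\cdot 1) = r f(1) = \rho_{f(1)}(r)$, so $f = \rho_{f(1)}$. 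Since $\rho_m(1) = 1\cdot m = m$, the assignments $m \mapsto \rho_m$ and $f \mapsto f(1)$ are mutually inverse.

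For the first assertion, given the family $(a_i \in M_i \mid i \in I)$, I would form the homomorphisms $\rho_{a_i} : {}_RR \to M_i$ and invoke the universal property of $(\prod M_i,(p_j))$ to obtain a unique homomorphism $f : {}_RR \to \prod M_i$ with $p_j f = \rho_{a_j}$ for all $j \in I$. Setting $a := f(1)$ then gives $p_j(a) = p_j(f(1)) = \rho_{a_j}(1) = a_j$. For uniqueness, if $a'$ is another element with $p_j(a') = a_j$ for all $j$, then $\rho_{a'} : {}_RR \to \prod M_i$ satisfies $p_j \rho_{a'}(r) = r\,p_j(a') = r a_j = \rho_{a_j}(r)$ — here using that each $p_j$ is a homomorphism of $R$-semimodules — so $p_j\rho_{a'} = \rho_{a_j}$, and the uniqueness clause of the universal property forces $\rho_{a'} = f$, hence $a' = \rho_{a'}(1) = f(1) = a$.

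The remaining assertions are then immediate. The final statement — that for each $a \in \prod M_i$ there is a unique family $(a_i \mid i \in I)$ with $p_i(a) = a_i$ — amounts to setting $a_i := p_i(a)$, existence and uniqueness being trivial. Given $a,b$ with $p_i(a) = a_i$ and $p_i(b) = b_i$, additivity of each $p_i$ yields $p_i(a+b) = a_i + b_i$, and $a+b$ is the only element with these coordinates by the uniqueness half of the first assertion applied to the family $(a_i + b_i)$; likewise $p_i(ra) = r\,p_i(a) = r a_i$ since $p_i$ is $R$-linear, and uniqueness again follows from the first assertion applied to $(r a_i)$.

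I expect the only point needing genuine care to be the bijection between the elements of $M$ and the homomorphisms ${}_RR \to M$ — in particular verifying that $\rho_m$ really is a homomorphism of left $R$-semimodules, which is exactly where the semimodule axioms enter; once that is set up, everything else is a direct unwinding of the universal property.
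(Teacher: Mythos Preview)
Your proof is correct and follows essentially the same approach as the paper: both exploit the bijection between elements of an $R$-semimodule $M$ and homomorphisms ${}_RR \to M$, then apply the universal property of the product to the family of such homomorphisms and read off the element by evaluating at $1$. The paper phrases the construction of the maps $R \to M_i$ slightly differently, routing through the one-element set $\{1\}$ and implicitly invoking freeness of $R$ on one generator, whereas you set up the bijection $m \leftrightarrow \rho_m$ explicitly at the outset; your version is arguably cleaner, but the underlying argument is the same.
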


 \begin {proof}
 Given a family $(a_i \in M_i| i \in I)$. We define $\varphi_i : \{1\} 
\to M_i$ by $\varphi_i (1) = a_i$ for all 
$i \in I$. Given $g_i \in \Hom_R (R,M_i)$, such that the diagrams 
 $$ \qtriangle[\{1\}`R`M_i;`\varphi_i`g_i] $$
 commute.
 Then there is a unique $g: R \to \prod M_i$ such that
 $$ \btriangle[R`\prod M_i`M_j;g`g_j`p_j] $$ 
 commutes for all $i \in I$. So we get a commutative diagram 
 $$\bfig 
 \putmorphism(0, 1000)(1, 0)[\{1\}`R`]{500}1a
 \putmorphism(0, 1000)(1, -1)[`\prod M_i`]{500}1l
 \putmorphism(0, 1000)(1, -2)[`M_j.`\varphi_j]{500}1l
 \putmorphism(500, 1000)(0, -1)[``g]{500}1r
 \putmorphism(500, 500)(0, -1)[``p_j]{500}1r
 \putmorphism(500, 1000)(1, -1)[``]{260}1l
 \putmorphism(700, 870)(0, -1)[``g_j]{740}1r
 \putmorphism(760, 250)(-1, -1)[``]{260}1l
 \efig$$
 The homomorphism $g$ is uniquely determined by $g(1)$. 
We define $a := g(1)$ and get $p_j(a) = \varphi_j (1) = a_j$.
 
 Since $a$ is uniquely determined by the $p_j(a) = a_j$, we get  
$p_j (a+b) = p_j (a) + p_j(b) = a_i + b_j$. The last statement is then clear.
 \end {proof}

 \begin {thm} %3.5
 {\em (Rules of computation for coproducts of semimodules.)} Let $(\coprod M_i, (\iota_j))$ 
be a coproduct of the family of $R$-semimodules $(M_i)_{i \in I}$. Then the 
homomorphisms $\iota_i: M_i \to \coprod M_j$ are injective. For each element 
$a \in \coprod M_j$ there is a finite subset $J \subseteq I$ and a finite family
$(a_i \in M_i | i \in J)$ with $a= \sum_{i \in J} \iota_i (a_i)$. 
The $a_i \in M_i$ are uniquely determined by $a$. 
 \end {thm}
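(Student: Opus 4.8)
The plan is to derive all three assertions from the universal property alone, together with the notion of subsemimodule from the preliminaries, without invoking any explicit construction of $\coprod M_i$.

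First, for each $j\in I$ I would build a retraction $p_j\colon\coprod M_i\to M_j$ of $\iota_j$: apply the universal property to the family consisting of $\id_{M_j}\colon M_j\to M_j$ together with the zero homomorphisms $M_i\to M_j$ for $i\neq j$, obtaining a unique homomorphism $p_j$ with $p_j\circ\iota_j=\id_{M_j}$ and $p_j\circ\iota_i=0$ for $i\neq j$. The relation $p_j\circ\iota_j=\id_{M_j}$ exhibits $\iota_j$ as a split monomorphism, hence injective, which is the first assertion.

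Second, for the spanning statement let $C\subseteq\coprod M_i$ be the subsemimodule generated by $\bigcup_i\iota_i(M_i)$; concretely $C$ is the set of all finite sums $\sum_k r_k\,\iota_{i_k}(m_k)$ (the empty sum being $0$), and one checks directly, using the distributive laws, that this set is closed under addition and under the $R$-action, hence is a subsemimodule, and that it contains each $\iota_i(M_i)$. Let $\kappa\colon C\hookrightarrow\coprod M_i$ denote the inclusion and $\iota_i'\colon M_i\to C$ the corestriction of $\iota_i$. The universal property yields a homomorphism $f\colon\coprod M_i\to C$ with $f\circ\iota_i=\iota_i'$ for every $i$; then $\kappa\circ f\circ\iota_i=\iota_i$ for every $i$, so $\kappa\circ f=\id_{\coprod M_i}$ by the uniqueness clause, and therefore $\kappa$ is surjective, i.e.\ $C=\coprod M_i$. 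Since each $\iota_{i_k}$ is a homomorphism we may absorb scalars, $r_k\,\iota_{i_k}(m_k)=\iota_{i_k}(r_k m_k)$, so every $a\in\coprod M_i$ is already a finite sum $\sum_k\iota_{i_k}(m_k)$; collecting the summands with equal index (legitimate by commutativity and associativity of addition) gives a finite set $J\subseteq I$ and elements $a_i:=\sum_{i_k=i}m_k\in M_i$ with $a=\sum_{i\in J}\iota_i(a_i)$, which is the second assertion.

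Third, for uniqueness, suppose $a=\sum_{i\in J}\iota_i(a_i)=\sum_{i\in J}\iota_i(a_i')$, where $J$ has been enlarged to a common finite index set by padding with zeros (using $\iota_i(0)=0$). Fix $j\in J$ and apply $p_j$: by additivity each summand $\iota_i(a_i)$ with $i\neq j$ goes to $0$ and $\iota_j(a_j)$ goes to $a_j$, so $a_j=p_j(a)=a_j'$. The bookkeeping with finite sums and the additivity of $p_j$ are routine; the one place where the argument uses genuinely more than the underlying commutative monoid is the construction of $C$ and of the retractions $p_j$ from the universal property, so I would regard the spanning step — in particular the verification that the subsemimodule generated by the $\iota_i(M_i)$ is exactly the set of finite $R$-linear combinations of their elements — as the crux.
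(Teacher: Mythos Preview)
Your proof is correct and follows essentially the same route as the paper: build the retractions $p_j$ from the universal property to get injectivity and uniqueness, and use the universal property again to show that the subsemimodule generated by the images $\iota_i(M_i)$ is all of $\coprod M_i$. The paper is terser (it writes $\widetilde M=\sum\iota_j(M_j)$ without spelling out the scalar absorption and index collection you detail), but the argument is the same.
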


 \begin{proof} For a given $k$ define homomorphisms $f_i:M_i \to M_k$ by 
 $$f_i = \left\{\begin{array}{ll}
 \id, &i=k \\
 0,& \text{else. }
 \end{array}\right.$$
 Then
 $$ \qtriangle[M_i`\coprod M_j`M_k;\iota_i`f_i`f] $$ 
 defines a unique homomorphism $f$. For $i=k$ we get $f \iota_k = \id_{M_k}$, hence 
$\iota_k$ is injective.
 
 Define $\widetilde M := \sum \iota_j(M_j) \subseteq \coprod M_j$. Let 
$\kappa_i: M_i \to \widetilde M$ and $\kappa : \widetilde M \to \coprod M_j$ be 
the inclusion homomorphisms. Then $\kappa \kappa_i = \iota_i$ for all $i$. Furthermore there is a 
unique homomorphism $p: \coprod M_j \to \widetilde M$ such that the diagrams
 $$ \qtriangle[M_i`\coprod M_j`\widetilde M;\iota_i`\kappa_i`p] $$ 
 commute. The commutative diagram 
 $$\bfig 
 \putmorphism(0, 1000)(1, 0)[M_i`\coprod M_j`\iota_i]{500}1a
 \putmorphism(0, 1000)(1, -1)[`\widetilde M`\kappa_i]{500}1r
 \putmorphism(0, 1000)(1, -2)[`\coprod M_j.`\iota_i]{500}1l
 \putmorphism(500, 1000)(0, -1)[``p]{500}1r
 \putmorphism(500, 500)(0, -1)[``\kappa]{500}1r
 \putmorphism(500, 1000)(1, -1)[``]{260}1l
 \putmorphism(700, 870)(0, -1)[``\id]{740}1r
 \putmorphism(760, 250)(-1, -1)[``]{260}1l
 \efig$$
 implies $\id = \kappa p$ by the uniqueness property, so $\kappa$ is also 
surjective. Thus $\widetilde M = \coprod M_j$.

 Let $a = \sum \iota_j (a_j)$. Form $f$ as in the beginning of the proof. 
Then we have $f(a) = f (\sum \iota_j (a_j)) = \sum f 
\iota_j (a_j) = \sum f_j (a_j) = a_i$, so the $a_i$ 
are uniquely determined by $a$. 
 \end{proof}

 \begin{thm} %3.8
 \begin{enumerate}
 \item $R\xsMod$ has  (direct) products. 
 \item $R\xsMod$ has coproducts (or direct sums).
 \end{enumerate}
 \end{thm}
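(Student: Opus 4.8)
The plan is to give explicit constructions in both cases, building everything on the Cartesian product of the underlying sets.

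For (1), equip the Cartesian product $\prod_{i\in I}M_i$, the set of all families $(m_i)_{i\in I}$ with $m_i\in M_i$, with componentwise addition and componentwise scalar multiplication and with zero element $(0)_{i\in I}$. Each of the five semimodule axioms holds here because it holds in every factor $M_i$ and only involves one coordinate at a time, so $\prod_{i\in I}M_i$ is an $R$-semimodule; let $p_j$ be the $j$-th coordinate projection, which is manifestly a homomorphism. For the universal property, given an $R$-semimodule $N$ and homomorphisms $f_j:N\to M_j$, set $f(n):=(f_j(n))_{j\in I}$. This lies in $\prod_{i\in I}M_i$, it is a homomorphism because each $f_j$ is, and $p_j\circ f=f_j$ by construction. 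Uniqueness is immediate: any $g$ with $p_j\circ g=f_j$ for all $j$ has $j$-th coordinate equal to $f_j(n)$ at every $n$, hence $g=f$.

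For (2), take the finite-support subobject
$$\coprod_{i\in I}M_i:=\{(m_i)_{i\in I}\in{\textstyle\prod_{i\in I}M_i}\mid m_i=0\text{ for all but finitely many }i\}.$$
One first checks this is an $R$-subsemimodule of $\prod_{i\in I}M_i$: the support of a sum is contained in the union of the two supports, and since $r\cdot0=0$ the support of $r\cdot(m_i)_{i\in I}$ is contained in that of $(m_i)_{i\in I}$, so both closure conditions of the definition of a subsemimodule hold. Let $\iota_j:M_j\to\coprod M_i$ send $m$ to the family that equals $m$ in coordinate $j$ and $0$ elsewhere; this is a homomorphism. Given homomorphisms $f_j:M_j\to N$, define $f\bigl((m_i)_{i\in I}\bigr):=\sum_{i\in J}f_i(m_i)$, where $J$ is the finite support of $(m_i)_{i\in I}$; this is well defined because $J$ is finite and enlarging $J$ only adds summands $f_i(0)=0$.

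It remains to verify that $f$ is a homomorphism with $f\circ\iota_j=f_j$ and that it is the only such map. The identities $f\circ\iota_j=f_j$ are read off from the definitions. Additivity of $f$ is the one point that genuinely uses the hypothesis that $N$ is \emph{commutative}: one passes to a common finite index set and regroups a finite sum in $N$, which is exactly where a non-commutative target would fail, so the coproduct is here a ``direct sum'' rather than a ``free product''; compatibility with scalar multiplication and with $0$ is then routine. Finally, every element of $\coprod M_i$ equals $\sum_{i\in J}\iota_i(m_i)$ over its finite support $J$, so any $g$ with $g\circ\iota_j=f_j$ satisfies $g\bigl((m_i)_{i\in I}\bigr)=\sum_{i\in J}g\,\iota_i(m_i)=\sum_{i\in J}f_i(m_i)=f\bigl((m_i)_{i\in I}\bigr)$, which gives uniqueness and is consistent with the rules of computation for coproducts established above. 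The main obstacle is really just this bookkeeping: unlike coequalizers and tensor products, products and coproducts of semimodules hold no surprises and are formed exactly as for modules.
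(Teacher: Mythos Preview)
Your proof is correct and follows essentially the same approach as the paper: both construct the product as the full Cartesian product with componentwise operations and projections, and the coproduct as its finite-support subsemimodule with the canonical injections, verifying the universal properties by the obvious componentwise formulas. Your write-up is slightly more explicit about checking the subsemimodule conditions and about where commutativity of the target monoid is used, but the content is the same.
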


 \begin {proof}
 1. Define $\prod M_i:= \{\alpha : I \to 
\cup_{i \in I} M_i |\ \forall j \in I : \alpha (j) \in 
M_j\}$ and $p_j: \prod M_i \to M_j$, $ p_j 
(\alpha) = \alpha (j) \in M_j$. It is easy to see, 
that $\prod M_i$ is an $R$-semimodule with componentwise 
operations and that the $p_j$ are homomorphisms.
If $(f_j: A \to M_j | j \in I)$ is a family of homomorphisms then 
there is a unique map $f: A \to \prod M_i$ such that 
$ p_j f = f_j$ for all $ j \in I$. The following families are identical: 
$(p_j f (a+b)) = (f_j (a+b)) = (f_j(a)+f_j 
(b))=(p_j f (a) + p_j f (b)) = (p_j (f(a)+f(b)))$ hence
$f (a+b)= f (a) + f (b)$. Analogously we see $f(r a ) = r b 
(a)$. Thus $f$ is a homomorphism and $(\prod 
M_i, (p_j))$ is a product of semimodules.

 2. Define $\coprod M_i: = \{\beta:I \to 
\cup_{i\in I} M_i |\ \forall j \in I : \beta (j) \in M_j, \beta 
\mbox { has finite}$ $\mbox{support}\}$ and $\iota_j:M_j \to \coprod 
M_i$, $ \iota_j (m_j) (i) = \delta_{ij} m_i$. Then 
$\coprod M_i \subseteq \prod M_i$ is a subsemimodule and 
$\iota_j$ are homomorphisms. Given $(f_j : M_j 
\to A| j \in I)$. Define $f(m) = f (\sum 
\iota_i m_i)= \sum f \iota_i (m_i) = \sum f_i (m_i)$. Then 
$f$ is a homomorphism of $R$-semimodules and we get $ f 
\iota_i (m_i) = f_i (m_i)$. Hence $f \iota_i = f_i$. If $g 
\iota_i$ for all  $i \in I$, then $g (m) = g (\sum 
\iota_i m_i) = \sum g \iota_i (m_i) = \sum f_i (m_i)$, thus 
$f=g$. 
 \end{proof}

 \begin{thm} \label{3.9}
 Let $(M_i|i \in I)$ be a family of subsemimodules of $M$. 
Then the following are equivalent:
 \begin{enumerate}
 \item $(M, (\iota_i: M_i \to M | i \in I))$ is a coproduct in $R\xsMod$.
 \item $M = \sum_{i \in I} M_i$ and $(\sum m_i = \sum m'_i$ $ 
\Longrightarrow $ $\forall i \in I: m_i = m'_i)$. 
 \end{enumerate}
 \end{thm}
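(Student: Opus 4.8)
The plan is to prove the two implications separately, leaning on the theorem ``Rules of computation for coproducts of semimodules'' already established just above.

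For (1) $\impl$ (2): Assume $(M,(\iota_i))$ is a coproduct, so we may apply the cited theorem with $\coprod M_i := M$ and each $\iota_i$ the inclusion of the subsemimodule $M_i$. That theorem yields, for every $a \in M$, a finite $J \subseteq I$ and $a_i \in M_i$ with $a = \sum_{i \in J}\iota_i(a_i) = \sum_{i \in J} a_i$; hence $M = \sum_{i \in I} M_i$. It also asserts that the $a_i$ are uniquely determined by $a$. To extract the implication in (2), take two finite-support families with $\sum m_i = \sum m'_i$, enlarge to a common finite index set $J$ (padding with zeros), and apply the uniqueness clause: both families represent the same element, so $m_i = m'_i$ for every $i$.

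For (2) $\impl$ (1): I would verify the universal property of the coproduct directly for $(M,(\iota_i))$. Given an $R$-semimodule $N$ and homomorphisms $f_i \colon M_i \to N$, define $f \colon M \to N$ by choosing, for each $a \in M$, the finite-support family $(m_i)$ with $a = \sum m_i$ — unique by (2) — and putting $f(a) := \sum f_i(m_i)$. Well-definedness is precisely the uniqueness half of (2); this is the one place where hypothesis (2) is genuinely used and is the crux of the argument. The rest is routine: for $a = \sum m_i$, $b = \sum m'_i$ over a common finite $J$, the family $(m_i + m'_i)$ (resp.\ $(r m_i)$) represents $a + b$ (resp.\ $ra$) with entries in the respective $M_i$, so $f(a+b) = \sum f_i(m_i + m'_i) = f(a) + f(b)$ and $f(ra) = \sum f_i(r m_i) = r f(a)$, and $f(0) = 0$. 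Since for $m \in M_i$ the representation of $\iota_i(m) = m$ has $m$ in slot $i$ and $0$ elsewhere, $f \iota_i = f_i$. For uniqueness, if $g \colon M \to N$ satisfies $g \iota_i = f_i$ for all $i$, then $g(a) = g\bigl(\sum \iota_i(m_i)\bigr) = \sum g\iota_i(m_i) = \sum f_i(m_i) = f(a)$, so $g = f$.

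I do not expect a serious obstacle beyond pinning the well-definedness step in (2) $\impl$ (1) cleanly to the uniqueness statement, and keeping track of the fact that every sum in sight is a finite-support sum, so that the standard manipulations (passing to a common finite index set, distributing each $f_i$ over a finite sum) are legitimate.
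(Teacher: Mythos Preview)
Your proposal is correct and, for $(2)\Rightarrow(1)$, essentially identical to the paper's proof. For $(1)\Rightarrow(2)$ you take a slightly shorter route: you invoke the already-proved ``Rules of computation for coproducts'' theorem to get both $M=\sum M_i$ and the uniqueness of representations in one stroke, whereas the paper re-derives these facts in place---it shows $M=\sum M_i$ by factoring the inclusion $\sum M_i\hookrightarrow M$ through a map $\kappa:M\to\sum M_i$ built from the coproduct property, and it obtains uniqueness by constructing projections $p_k:M\to M_k$ from the cocone $(\delta_{jk}:M_j\to M_k)$ and computing $m_k=p_k(\sum m_j)$. The two arguments are morally the same (the paper's projections $p_k$ are exactly the maps used in the proof of the ``Rules of computation'' theorem), so the difference is only whether one cites or reproves.
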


 \begin{defn}
 If one of the equivalent conditions of Theorem \ref{3.9} 
is satisfied, then $M$ is called an {\em internal direct sum} of the  
$M_i$, and we write $M = \oplus_{i \in I} M_i$. 
 \end{defn}

 \begin{proof} 
 $(1) \Longrightarrow (2):$ We have an inclusion $\iota: \sum_{i \in I} M_i \to M$. 
Let $\kappa_i: M_i \to \sum_{i \in I} M_i$ 
be the inclusion homomorphisms. These homomorphisms induce a homomorphism 
$\kappa: M \to \sum_{i \in I} M_i$ such that the following diagram
  $$\bfig 
 \putmorphism(0, 1000)(1, 0)[M_j`M`\iota_j]{500}1a
 \putmorphism(0, 1000)(1, -1)[`\sum_{i \in I} M_i`\kappa_j]{500}1r
 \putmorphism(0, 1000)(1, -2)[`M`\iota_j]{500}1l
 \putmorphism(500, 1000)(0, -1)[``\kappa]{500}1r
 \putmorphism(500, 500)(0, -1)[``\iota]{500}1r
 \putmorphism(500, 1000)(1, -1)[``]{260}1l
 \putmorphism(700, 870)(0, -1)[``\id_M]{740}1r
 \putmorphism(760, 250)(-1, -1)[``]{260}1l
 \efig$$
commutes. This implies $\id = \iota \kappa$ by the uniqueness property, so $\iota$ is  
surjective. Thus $M = \sum_{i \in I} M_i$.

 If $\sum m_i = \sum m'_i$, then use the diagrams
  $$ \qtriangle[M_j`M`M_k;\iota_j`\delta_{jk}`p_k] $$
 to get $m_k = \sum_j \delta_{jk} (m_j) = \sum_j p_k 
\iota_j (m_j) = p_k (\sum_j m_j) = 
p_k (\sum_j m'_j) = \sum_j p_k 
\iota_j (m'_j) = \sum_j \delta_{jk} (m'_j) = m'_k$. 

$(2) \Longrightarrow (1):$ 
 Define $f$ in the diagram
 $$ \qtriangle[M_i`M`N;\iota_i`f_i`f] $$ 
 by $f (\sum m_i) : = \sum f_i (m_i)$. By (2) $f$ is a 
well-defined homomorphism and we have $f \iota_j 
(m_j)= f_j (m_j) = f (m_j)$. Furthermore $f$ 
is uniquely determined since $g \iota_j = f_j$ $\Longrightarrow$ $g 
(\sum m_i) = \sum g (m_i) = \sum g \iota_i (m_i) = \sum f_i 
(m_i) = f (\sum m_i) \Longrightarrow f = g$. 
 \end{proof}

 \begin{rem}
 If $(M, (\iota_i: M_i \to M | i \in I))$ is an internal direct sum then
 $$M = \sum_{i \in I} M_i \mbox{ and } \forall i \in I: M_i 
\cap \sum_{j\not=i, j \in I} M_j = 0.$$ This is an immediate consequence of 
Theorem \ref{3.9} (2). Another consequence of (2) is  
$$M = \sum_{i \in I} M_i \mbox{ and } (\sum m_i = 0 \Longrightarrow \forall i \in I: m_i = 0).$$ 
 However, neither of these two conditions implies, that $(M, (\iota_i: M_i \to M | i \in I))$ 
is an internal direct sum, as the following example shows.
 $M := \{0, 1_A, 1_B, 2_B\}$ with the addition
 $$\begin{tabular}{||c||c|c|c|c||} \hline \hline
 +        & $0$ & $1_A$ & $1_B$ & $2_B$ \\ \hline \hline
 $0$ & $0$ & $1_A$ & $1_B$ & $2_B$\\ \hline 
 $1_A$ & $1_A$ & $1_A$ & $2_B$ & $2_B$\\ \hline 
 $1_B$ & $1_B$ & $2_B$ & $2_B$ & $2_B$\\ \hline 
 $2_B$ & $2_B$ & $2_B$ & $2_B$ & $2_B$\\ \hline \hline
 \end{tabular}$$
 has the subsemimodules $M_A:= \{0, 1_A\}$ and $M_B:= \{0, 1_B, 2_B\}$ satisfying
$M = M_A + M_B$ and $M_A \cap M_B = 0$. It also satisfies $m_1 + m_2 = 0 \impl 
m_1 = m_2 = 0$. Because of $1_A + 1_B = 0 + 2_B$ it does not, however, satisfy 
$m_1 + m_2 = m'_1 + m'_2 \impl m_1 = m'_1, m_2 = m'_2$.
 \end{rem}
 
 \begin{thm} %3.11
 Let $(\coprod M_i, (\iota_j: M_j \to 
\coprod_{i \not= j} M_i))$ be a coproduct in $R\xsMod$. Then  
$\coprod M_i$ is an internal direct sum of the $\iota_j (M_j)$. 
 \end{thm}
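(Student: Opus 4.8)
The plan is to reduce the statement to Theorem~\ref{3.9} applied to the family of subsemimodules $\iota_j(M_j) \subseteq \coprod M_i$: by that theorem together with the definition of internal direct sum it suffices to verify conditions (1) and (2) of Theorem~\ref{3.9} for this family. Everything will follow from the theorem on rules of computation for coproducts of semimodules proved above, which tells us that each $\iota_j$ is injective and that every $a \in \coprod M_i$ is a finite sum $a = \sum_{i \in J}\iota_i(a_i)$ with $J \subseteq I$ finite and the $a_i \in M_i$ uniquely determined by $a$. Since $\iota_j$ is injective it is an isomorphism of $M_j$ onto the subsemimodule $\iota_j(M_j)$, so an arbitrary element of $\iota_j(M_j)$ is $\iota_j(m_j)$ for a unique $m_j \in M_j$; thus the conditions of Theorem~\ref{3.9}, phrased for elements of the subsemimodules, become statements about the elements $\iota_j(m_j)$.

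For condition (1), namely $\coprod M_i = \sum_{i \in I}\iota_i(M_i)$: the inclusion $\supseteq$ is clear, and $\subseteq$ is immediate from the existence part of the rules of computation, every $a$ being of the form $\sum_{i \in J}\iota_i(a_i)$. For condition (2), assume $\sum_i \iota_i(m_i) = \sum_i \iota_i(m'_i)$ for two finitely supported families $(m_i)$ and $(m'_i)$. Passing to the finite union $J$ of the two supports, both sides express one element $a$ as $\sum_{i \in J}\iota_i(\cdot)$, so the uniqueness part of the rules of computation gives $m_i = m'_i$, hence $\iota_i(m_i) = \iota_i(m'_i)$, for all $i \in I$; concretely this uniqueness comes from applying, for each fixed $k$, the homomorphism $\coprod M_i \to M_k$ induced by $\id_{M_k}$ on $M_k$ and the zero maps on the other $M_i$, exactly as in the proof of that theorem. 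With (1) and (2) established, Theorem~\ref{3.9} yields that $\coprod M_i$ is the internal direct sum $\oplus_{i \in I}\iota_i(M_i)$.

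This is routine once the rules of computation for coproducts are available; the only steps needing a little care are the translation of ``element of the subsemimodule $\iota_i(M_i)$'' into ``element of the form $\iota_i(m_i)$'' via injectivity of $\iota_i$, and, in condition (2), reconciling two sum representations with possibly different supports by refining to their common finite support before invoking uniqueness. I expect no real obstacle beyond this bookkeeping.
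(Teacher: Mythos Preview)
Your proof is correct and, like the paper's, reduces the claim to Theorem~\ref{3.9}; the only difference is which of the two equivalent conditions you verify. The paper checks condition~(1) directly: since each $\iota_j$ is injective, $M_j \cong \iota_j(M_j)$, and transporting the coproduct structure along these isomorphisms shows that the inclusions $\iota_j(M_j) \hookrightarrow \coprod M_i$ themselves form a coproduct, whence the internal direct sum by definition. You instead verify condition~(2), extracting the sum decomposition and its uniqueness from the rules of computation for coproducts. Both routes are short and essentially equivalent; the paper's is a one-line transport-of-structure argument, while yours makes the element-level content explicit.
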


 \begin{proof}
 $\iota_j$ injective $\impl M_j \cong \iota_j 
(M_j) \impl$ 
 $$\bfig 
 \putmorphism(0, 500)(1, 0)[M_j \iso \iota_j(M_j)`\coprod M_i`]{750}1a
 \putmorphism(-250, 500)(2,-1)[``]{1000}1l
 \putmorphism(250, 500)(1, -1)[``]{500}1l
 \putmorphism(750, 500)(0, -1)[`N`]{500}1r
 \efig$$
 This defines a coproduct. By \ref{3.9} we have a internal direct sum. 
 \end{proof}

 \begin{defn} % 3.12
 A subsemimodule $M \subseteq N$ is called a {\em direct 
summand} of $N$, if there is a subsemimodule $M' \subseteq N$ 
such that $N = M \oplus M'$ as an internal direct sum. 
 \end{defn}

 \begin{thm} \label{3.13}
 Let $M \subseteq N$ be a direct summand of $N$. Then
 the following hold:
 \begin{enumerate}
 \item there is a $p \in R\xsMod(N, M)$ such that 
 $$\bfig 
 \putmorphism(0, 0)(1, 0)[(M`N`\iota_{ }]{500}1a
 \putmorphism(500, 0)(1, 0)[\phantom{N}`M) \ = \ \id_M,`p]{750}1a
 \efig$$
 \item there is an $f \in R\xsMod (N, N)$ such that $f^2 = f$ 
and $f(N)=M.$ 
 \end{enumerate}
 \end{thm}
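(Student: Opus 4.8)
The plan is to leverage the coproduct description of an internal direct sum supplied by Theorem~\ref{3.9}. By the definition of direct summand, there is a subsemimodule $M' \subseteq N$ with $N = M \oplus M'$ as an internal direct sum; write $\iota\colon M \to N$ and $\iota'\colon M' \to N$ for the two inclusions. By Theorem~\ref{3.9}, $(N,(\iota,\iota'))$ is a coproduct of $M$ and $M'$ in $R\xsMod$, and equivalently every $n \in N$ has a \emph{unique} expression $n = m + m'$ with $m \in M$ and $m' \in M'$.

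For part (1), I would apply the universal property of the coproduct $(N,(\iota,\iota'))$ to the pair of homomorphisms $\id_M\colon M \to M$ and $0\colon M' \to M$. This produces a unique homomorphism $p \in R\xsMod(N,M)$ with $p\circ\iota = \id_M$ and $p\circ\iota' = 0$; the first of these is exactly the assertion of (1). (Concretely $p$ sends $n = m + m' \mapsto m$, and the fact that this is well defined and $R$-linear is precisely condition (2) of Theorem~\ref{3.9}, so nothing new has to be verified.) For part (2), set $f := \iota\circ p \in R\xsMod(N,N)$. Then $f$ is a composite of homomorphisms, and
$$f^2 = \iota\circ p\circ\iota\circ p = \iota\circ\id_M\circ p = \iota\circ p = f,$$
so $f$ is idempotent. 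Since $p\circ\iota = \id_M$, the map $p$ is surjective, hence $p(N) = M$ and therefore $f(N) = \iota(p(N)) = \iota(M) = M$, which completes (2).

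I do not expect any genuine obstacle: once Theorem~\ref{3.9} is available, both parts are essentially formal. The only point that requires any thought is the existence of the retraction $p$ as an honest homomorphism of $R$-semimodules, and that is handed to us directly by the equivalence between internal direct sums and coproducts — the uniqueness-of-decomposition clause of Theorem~\ref{3.9}(2) is exactly what makes the assignment $m + m' \mapsto m$ well defined, and the universal property then packages additivity and $R$-linearity for free. Should one wish to avoid the universal property, the same remark shows that defining $p$ by $p(m+m') := m$ directly works, with well-definedness again being the only nontrivial check.
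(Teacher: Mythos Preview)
Your proof is correct and is essentially identical to the paper's: both invoke Theorem~\ref{3.9} to view $(N,(\iota,\iota'))$ as a coproduct, obtain $p$ from the universal property applied to the pair $(\id_M,0)$ (the paper writes this pair as $(\delta_{11},\delta_{21})$), and then set $f:=\iota\circ p$ and verify idempotency and image exactly as you do. The only cosmetic difference is notation.
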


 \begin{proof} 
 (1) Let $M_1 : = M$ and $M_2 \subseteq N$ with $N = M_1 \oplus M_2$. 
 We define $p=p_1 : N \to M_1$ by 
 $$ \qtriangle[M_i`N`M_1;\iota_i`\delta_{i1}`p_1] $$
 where $\delta_{ij} = 0$ for $i \not= j$ and $\delta_{ij} 
= \id_{M_i}$ for $i=j$. Then we have $p_1 \iota_1 = 
\delta_{11} = \id_M$. 

 (2) For $f: = \iota p : N 
\to N$ we have $f^2 = \iota p \iota p = \iota p = 
f$ because of $p \iota =  \id$. Furthermore $f (N) = \iota p (N)= 
M$, since $p$ is surjective. 

 \end{proof}

 \begin{rem}
 Neither of the two conditions implies, that $M$ is a direct summand of $N$,
as the following example shows.
 $N := \{0, 1, 2\}$ with the addition
 $$\begin{tabular}{||c||c|c|c||} \hline \hline
 +        & $0$ & $1$ & $2$ \\ \hline \hline
 $0$ & $0$ & $1$ & $2$\\ \hline 
 $1$ & $1$ & $1$ & $2$\\ \hline 
 $2$ & $2$ & $2$ & $2$\\ \hline \hline
 \end{tabular}$$
 has the subsemimodules $M_1:= \{0, 1\}$ and $M_2:= \{0, 2\}$. $N$ is not the direct 
sum of the subsemimodules $M_1$ and $M_2$, but $p_1: M \to M_1$ defined by $p_1(1) = 1$, 
$p_1(2) = 1$ satisfies $p_1 \iota_1 = \id_{M_1}$. And $f := \iota_1 p_1$ satisfies condition (2).
 \end{rem}

 \section{The Structure of Semiideals of $\N_0$} 
 %----------------------------------------------------- 
 
 It is well known, that any ideal of $\mathbb Z$ is cyclic, i.e. a principal ideal. For the semiring $\N_0$ the structure of semiideals is somewhat more complicated and will be studied in this section. We will call a semiideal $M \subseteq \N_0$ {\em cyclic} if there is an element $d \in M$ such that $M = \N_0 \cdot d$. The set $\{4 + n \cdot 2 | n \in \N_0\} \cup \{0\}$ is clearly a semiideal, but not cyclic. Another example of a semiideal, that is not cyclic, is $\{8 + n \cdot 2 | n \in \N_0\} \cup \{0, 4\}$.
 
 \begin{defn}
 Let $M \subseteq \N_0$, $M \not= 0$ be a semiideal. 
 \begin{enumerate}
 \item An element $d \in \N$ is called a {\em difference} of $M$, if there exist two elements $a, b \in M, a \not= 0$ such that $a + d = b$.
 \item The minimal difference $d$ of $M$ is called the {\em period} of $M$.
  \hfill $\Box$
 \end{enumerate}
 \end{defn}
 
 Clearly any element $a \not= 0$ in $M$ is a difference of $M$. However, a difference of $M$ is not necessarily an element of $M$.
 
 \begin{thm} \label{period}
 Let $M \subseteq \N_0$ be a semiideal. If $d$ is a difference of $M$, then there is an element $c \in M, c \not= 0$ divisible by $d$, such that $c + nd \in M$ for all $n \in \N_0$.
 \end{thm}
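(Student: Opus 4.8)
The plan is to reduce the statement to a classical fact about numerical semigroups. Fix, as in the definition of a difference, elements $a, b \in M$ with $a \neq 0$ and $b = a + d$. Since $M$ is a semiideal it is closed under addition and under multiplication by elements of $\N_0$, so $\N_0 a + \N_0 b \subseteq M$, and this is the subset inside which I would work. As $b - a = d$, the common divisors of $a$ and $b$ are exactly the common divisors of $a$ and $d$; hence $g := \gcd(a,b) = \gcd(a,d)$, and in particular $g \mid d$.

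Next I would pass to coprime data: write $a = g a_1$, $d = g d_1$ and $b = g b_1$ with $b_1 = a_1 + d_1$, so that $\gcd(a_1, d_1) = 1$, hence $\gcd(a_1, b_1) = 1$, while $a_1 \geq 1$ and $d_1 \geq 1$. The key elementary input is that the numerical semigroup $\N_0 a_1 + \N_0 b_1$ contains every integer $m \geq N$, where $N := (a_1 - 1)\, b_1$: since $\gcd(a_1, b_1) = 1$, the set $\{0, b_1, \dots, (a_1 - 1) b_1\}$ is a complete residue system modulo $a_1$, so for $m \geq N$ one may choose $j \in \{0, \dots, a_1 - 1\}$ with $j b_1 \equiv m \pmod{a_1}$; then $m - j b_1 \geq m - (a_1 - 1) b_1 \geq 0$ is divisible by $a_1$, say $m - j b_1 = i a_1$ with $i \in \N_0$, and therefore $m = i a_1 + j b_1$.

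With $N$ fixed, set $c := (N + 1)\, d$. By construction $c \neq 0$ and $d \mid c$, so it only remains to check that $c + n d \in M$ for every $n \in \N_0$. For this, write $c + n d = (N + 1 + n)\, d = g\cdot((N + 1 + n)\, d_1)$; since $d_1 \geq 1$ we have $(N + 1 + n)\, d_1 \geq N + 1 > N$, so by the previous paragraph $(N + 1 + n)\, d_1 = i a_1 + j b_1$ for suitable $i, j \in \N_0$, and multiplying through by $g$ gives $c + n d = i (g a_1) + j (g b_1) = i a + j b \in \N_0 a + \N_0 b \subseteq M$, as required.

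The only substantive point is the numerical-semigroup lemma of the middle paragraph; everything else is the $\gcd$-reduction together with a direct verification, so I do not expect a genuine obstacle. I would, however, take care of the degenerate case $a_1 = 1$ (then $N = 0$ and $\N_0 a_1 + \N_0 b_1 \supseteq \N_0$, so the lemma holds trivially) and double-check that the divisibility $g \mid d$ — which is precisely what permits pulling $g$ out of $(N + 1 + n)\, d$ — is used correctly.
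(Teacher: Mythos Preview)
Your argument is correct. Both proofs work inside the subsemiideal $\N_0 a + \N_0 b$, but the routes differ substantially. The paper bypasses any $\gcd$-reduction and any appeal to coprimality: it simply sets $c' = ab$ and, for each $n$, performs the Euclidean division $n = qb + r$ with $r < b$ and $s := b - r$, then rewrites
\[
c' + nd \;=\; ab + rd + qdb \;=\; sa + r(a+d) + qdb \;=\; sa + (r+qd)b \in M,
\]
after which $c := d\cdot c' = dab$ is divisible by $d$. You instead reduce to coprime data via $g = \gcd(a,b)$ and invoke the classical numerical-semigroup lemma (every integer at least $(a_1-1)b_1$ is representable), producing $c = ((a_1-1)b_1 + 1)\,d$. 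Your approach is more conceptual and ties the result to well-known Frobenius-type facts, at the cost of an auxiliary lemma and a case check; the paper's proof is a single self-contained computation with no side conditions, though it yields a larger witness $c = dab$.
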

 
 \begin{proof}
 Let $a + d = b, a \not= 0$. Let $n = qb + r$ with $q, r \in \N_0$ and $r < b$. Then there is an $s \in \N_0$ with $r + s = b$. Set $c' := ab$.  Then $c' + nd = ba + rd + qdb = sa + ra + rd + qdb = sa + r(a + d) + qdb = sa + rb + qdb = sa + (r + qd)b \in M$, thus $c' + nd \in M$ for all $n \in \N_0$. Set $c := dc'$ to get $c + nd \in M$ for all $n \in \N_0$ and $c$ divisible by $d$.
 \end{proof}
 
 This Theorem immediately implies Lemmas 4, 5, 7, 17, 18, and Theorem 20 in \cite{AD} since $\{c+nd | n \in \N_0\} \cup \{0\} = dT_{c/d}$ in \cite{AD}. 
 
 \begin{lma} \label{period2}
 Let $d$ be the period of $M$ with $a + d = b, a \not= 0, a, b \in M$. Then $d$ divides $a$.
 \end{lma}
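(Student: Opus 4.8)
The plan is to argue by contradiction: assuming $d$ does not divide $a$, I will exhibit a difference of $M$ that is strictly positive and strictly smaller than $d$, contradicting the minimality of $d$ among the differences of $M$.

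First I would perform division with remainder, writing $a = qd + r$ with $q \in \N_0$ and $0 \le r < d$. The hypothesis $d \nmid a$ forces $r > 0$, so in fact $0 < r < d$; note that $q$ is then exactly $\lfloor a/d \rfloor$, and this is the only property of $q$ I will need. Next I would pass to convenient elements of $M$: since $M$ is a semiideal it is closed under addition and under multiplication by elements of $\N_0$, so both $(q+2)a$ and $(q+1)b$ lie in $M$, and $(q+2)a \ne 0$ because $a \ne 0$.

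The computation that makes it all work uses $b = a + d$:
\[
(q+1)b - (q+2)a = (q+1)(a+d) - (q+2)a = (q+1)d - a = (q+1)d - (qd+r) = d - r .
\]
Thus $(q+2)a + (d-r) = (q+1)b$ with $(q+2)a,\ (q+1)b \in M$ and $(q+2)a \ne 0$, so by definition $d - r$ is a difference of $M$. Since $0 < r < d$ we have $0 < d - r < d$, contradicting that $d$ is the period (the minimal difference) of $M$. Hence $r = 0$, i.e. $d \mid a$.

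I do not expect a genuine obstacle; the only thing to get right is the choice of multipliers $q+2$ and $q+1$, which is forced by requiring that $(q+1)b - (q+2)a = (q+1)d - a$ land strictly between $0$ and $d$ — and this holds precisely because $qd < a < (q+1)d$. Note that the argument does not even invoke Theorem~\ref{period}: it uses only the semiideal axioms and the definition of the period.
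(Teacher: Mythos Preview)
Your proof is correct and follows essentially the same approach as the paper: write $a = qd + r$ by Euclidean division and, assuming $r>0$, exhibit a positive difference of $M$ strictly smaller than $d$, contradicting minimality. The only cosmetic variation is that the paper exhibits $r$ itself as the smaller difference (between $a + qb = (q+1)a + qd$ and $(q+2)a$), whereas you exhibit $d - r$ (between $(q+2)a$ and $(q+1)b$); both arguments are equally short and use only the semiideal axioms.
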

 
 \begin{proof}
  Let $a = qd + r$ with $q, r \in \N_0$ and $r < d$. If $r = 0$ then $a = qd$ and we are done. Assume $r\not=0$. Define $a' := (q + 1)a + qd = a + q(a + d) \in M$. Then $a' + r = (q + 1)a + qd + r = (q + 2) a \in M$, a contradiction to the minimality of $d$. Thus $r = 0$ and $a = qd$. 
 \end{proof}
 
 \begin{defn}
 A semiideal $M$ of $\N_0$ for which there is a $c \in M, c \not= 0$ and a $d \in \N = \N_0 \setminus \{0\}$ such that\\
 $\phantom{XXXXXXX} M = \{ c + nd | n \in \N_0 \} \cup \{0\}$\\
 is called a {\em periodic semiideal}. By \ref{period2} the element $d$ divides $c$.
  \hfill $\Box$
 \end{defn}
 
 \begin{lma}\label{period3}
 Let $M \not= 0$ be a semiideal of $\N_0$ and $d$ be the period of $M$. Then there is an element $c \in M, c \not= 0$ such that the elements of the form $c + nd$ are precisely the elements $c' \in M$ with $c \leq c'$. In particular there is a minimal such $c \in M$. 
 \end{lma}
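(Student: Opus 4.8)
The plan is to manufacture the desired $c$ as the least element of a suitable nonempty subset of $M$, and then to force the equality of the two sets by playing the candidate elements of $M$ above $c$ against the minimality of the period $d$.

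First I would invoke Theorem \ref{period}. Since $d$, being the period, is in particular a difference of $M$, that theorem hands us an element $c_0 \in M$ with $c_0 \neq 0$ and $c_0 + nd \in M$ for all $n \in \N_0$. Hence the set
$$S := \{\, c' \in M \mid c' \neq 0 \text{ and } c' + nd \in M \text{ for all } n \in \N_0 \,\}$$
is nonempty, and I would set $c := \min S$, using the well-ordering of $\N_0$. By construction $c \in M$, $c \neq 0$, and $c + nd \in M$ for every $n$, which already gives one of the two inclusions: $\{\, c + nd \mid n \in \N_0 \,\} \subseteq \{\, c' \in M \mid c \leq c' \,\}$.

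For the reverse inclusion I would take an arbitrary $c' \in M$ with $c \leq c'$ and write, by Euclidean division by $d$, $c' - c = nd + r$ with $n \in \N_0$ and $0 \leq r < d$. If $r = 0$ then $c' = c + nd$ and we are done, so suppose $r \neq 0$. Since $c \in S$ we have $c + (n+1)d \in M$, and the identity $c' + (d - r) = c + (n+1)d$ then exhibits $d - r$ as a difference of $M$; here $c' \neq 0$ because $c' \geq c \geq 1$, so $d-r$ really qualifies as a difference in the sense of the definition. But $0 < d - r < d$, contradicting that $d$ is the \emph{minimal} difference of $M$. Therefore $r = 0$, proving that the elements $c + nd$ are precisely the elements of $M$ that are $\geq c$. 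The closing ``in particular'' assertion is then immediate: the collection of all $c \in M$ enjoying the stated property is nonempty, hence has a least element.

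The only delicate point — and really the heart of the argument — is the step that produces the forbidden difference $d-r$: one must be sure that both sides of $c' + (d-r) = c + (n+1)d$ lie in $M$ and that the ``first'' member $c'$ is nonzero, since otherwise $d-r$ would not count as a difference. Both facts are guaranteed by the two defining features of $S$ together with $c' \geq c \geq 1$, so in the end there is no genuine obstacle; everything else is the routine Euclidean bookkeeping. (One could equally well take for $c$ the element supplied directly by Theorem \ref{period} and then simply observe it has the stated property, but routing through $\min S$ makes the minimality clause transparent.)
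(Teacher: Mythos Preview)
Your proof is correct and follows essentially the same route as the paper: obtain a starting $c$ from Theorem~\ref{period} and then force any $c' \in M$ with $c' \geq c$ onto the arithmetic progression $\{c+nd\}$ by producing, via Euclidean division, a difference strictly smaller than the period $d$. The only cosmetic differences are that the paper takes $c$ directly from Theorem~\ref{period} (your parenthetical alternative) rather than minimizing over $S$, and it extracts the forbidden small difference as $c' - (c+nd)$ rather than your mirror-image $d-r = (c+(n+1)d) - c'$.
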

 
 \begin{proof}
 By Theorem \ref{period} and Lemma \ref{period2} the elements of the form $c' := c + nd$ are in $M$ and clearly satisfy $c \leq c'$. Conversely let $c, c' \in M$ with $c < c'$. Then there is an $n \in \N$ with $c + nd < c' \leq c + (n + 1)d = c + nd + d \in M$. Since $c' \in M$ and $d$ has the minimality property we get $c' = c + (n + 1)d$. Hence $\{c+nd | n \in \N_0\} = \{c' \in M | c \leq c'\}$.
 \end{proof}
 
 \begin{defn}
 \begin{enumerate}
 \item The minimal $c$, as constructed in Lemma \ref{period3}, is called the {\em footing} of $M$.
 \item For a semiideal $M \not= 0$ of $\N_0$, the semiideal\\
  \phantom{XXXXX} $\mbox{perc}(M) := \{ c + nd | n \in \N_0 \} \cup \{0\}$\\
  where $d$ is the period and $c$ is the footing of $M$, is called the {\em periodic core} of $M$. 
 \hfill $\Box$
 \end{enumerate}
 \end{defn}
 
 \begin{lma}\label{period4}
 Let $d \in \N_0, d \geq 1$ be the period of $M$. Then $d$ divides any element $c \in M$. Furthermore $d$ is the greatest common divisor of all elements of $M$ or of any generating set of $M$.
 \end{lma}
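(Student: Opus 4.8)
The plan is to derive the divisibility assertion from Lemma~\ref{period2} by a shifting argument, and then read off the greatest-common-divisor characterization almost immediately. First I would fix a witnessing pair: since the period $d$ is in particular a difference of $M$, there are $a,b\in M$ with $a\neq 0$ and $a+d=b$. Lemma~\ref{period2} then gives $d\mid a$, hence also $d\mid b$. Now let $c\in M$ be arbitrary; the case $c=0$ is trivial, so assume $c\neq 0$. Because $M$ is closed under addition we have $a+c\in M$ and $b+c\in M$, and $(a+c)+d=b+c$ with $a+c\neq 0$ (as $a\neq 0$ in $\N_0$). Thus $d$ is again exhibited as a difference of $M$, this time by the pair $a+c$, $b+c$, and Lemma~\ref{period2} applied once more (the period is still $d$) yields $d\mid(a+c)$. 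Subtracting, $d\mid c$. Hence $d$ divides every element of $M$.

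Next I would establish the gcd statement. Since $d\mid m$ for all $m\in M$, the number $d$ is a common divisor of $M$, and therefore also of any generating set $G$ of $M$, because $G\subseteq M$. For maximality, let $e$ be any common divisor of $G$. Every element of $M$ is a finite $\N_0$-linear combination of elements of $G$, so $e$ divides every element of $M$; in particular $e\mid a$ and $e\mid b$ for the pair chosen above, whence $e\mid(b-a)=d$. Taking $G=M$ in this last argument also shows that $d$ is the gcd of all elements of $M$. So $d$ is the greatest common divisor both of $M$ and of any generating set of $M$.

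The argument is short and I do not expect a genuine obstacle. The only points that require a moment's care are checking that $a+c\neq 0$ so that Lemma~\ref{period2} may legitimately be reapplied, and noting that the semiideal generated by $G$ consists exactly of the finite $\N_0$-linear combinations of elements of $G$ (together with $0$ as the empty combination), which is what lets divisibility by $e$ propagate from $G$ to all of $M$. An alternative route to the divisibility part would instead invoke the footing $c_0$ of $M$ from Lemma~\ref{period3} and write $c+c_0$ in the form $c_0+nd$ for $c\in M$, $c\neq 0$; but the shifting argument above is more elementary since it avoids Lemma~\ref{period3} entirely.
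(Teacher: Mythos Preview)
Your argument is correct. The divisibility part, however, follows a genuinely different route from the paper's. The paper fixes $c\in M$, sets $t=\gcd(c,d)$, and uses a B\'ezout-type identity $nc=n'd+t$ with $n,n'\in\N_0$ (after adjusting signs) to exhibit $t$ as a difference of $M$; minimality of $d$ then forces $t=d$, i.e.\ $d\mid c$. Your shifting trick is more direct: you observe that $(a+c)+d=b+c$ is another witnessing pair for the period and simply reapply Lemma~\ref{period2} to get $d\mid(a+c)$, hence $d\mid c$. This avoids any appeal to B\'ezout and uses only closure of $M$ under addition together with Lemma~\ref{period2}; it is shorter and arguably cleaner. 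The paper's approach, by contrast, is more self-contained in that its core computation does not rely on the specific proof of Lemma~\ref{period2} but rather reproves the minimality mechanism directly.

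For the gcd statement the two arguments are essentially the same: both use that $d$ divides every element of $M$ (so $d$ is a common divisor) and that any common divisor $e$ must divide the witnessing pair $a,b$, hence $e\mid d$. Your version spells out the passage from a generating set $G$ to all of $M$ via $\N_0$-linear combinations, which the paper leaves implicit.
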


 \begin{proof} 
 Given $c \in M$. Let $\gcd(c,d) = t$. Then it is well-known from elementary number theory that $nc = n'd + t$ for some $n, n' \in \mathbb Z$. If $n, n'$ are both negative, then add multiples of $cd$ to both sides of the equation to get $nc = n'd + t$ for some $n, n' \in \N_0$.
 
  Let $c', c' + d \in M$. Then $n'(c' + d) \in M$. Furthermore $n'(c' + d) + t = n'c' + n'd + t = n'c' + nc \in M$. So $t$ is a difference of $M$. Since $d$ is the period of $M$ and $t \leq d$, we get $t = d$ and thus $d$ divides $c$. 
  
  By Theorem \ref{period} we get that there is a $c'' \in M, c'' \not= 0$ such that $c'' + nd \in M$ for all $n \in N_0$. Thus $d$ is the greatest common divisor of all elements of $M$ and clearly also of all elements of any generating set of $M$.
 \end{proof}
 
 \begin{thm} \label{period5}
 Let $M \subseteq \N_0$, $M \not= 0$ be a semiideal. Let $d$ be the period of $M$ and $c$ be the footing of $M$. Then 
 \begin{enumerate}
 \item $d$ is the greatest common divisor of all elements of $M$,
 \item $\mbox{\rm perc}(M) = \{ c + nd | n \in \N_0 \} \cup \{0\}  \subseteq M$ is a subsemiideal of $M$,
 \item $\{ c + nd | n \in \N_0 \} = \{c' \in M | c \leq c'\}$,
 \item $c - d \notin M$.
 \end{enumerate}
 \end{thm}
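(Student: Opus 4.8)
The plan is to read off all four assertions from the results already established for the period $d$ and the footing $c$; only part (4) carries genuinely new content, everything else being bookkeeping over Theorem \ref{period} and Lemmas \ref{period2}, \ref{period3}, \ref{period4}.

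Part (1) is nothing but the closing statement of Lemma \ref{period4}, where the period $d$ was identified as the greatest common divisor of all elements of $M$. For part (3), Lemma \ref{period3} produced an element $c \in M$, $c \neq 0$, with $\{c + nd \mid n \in \N_0\} = \{c' \in M \mid c \leq c'\}$, and the footing is by definition the least such $c$; so (3) holds verbatim for the footing. Part (2) then follows from (3): every $c + nd$ lies in $M$ by (3), and $0 \in M$ since $M$ is a semiideal, so $\mathrm{perc}(M) = \{c + nd \mid n \in \N_0\} \cup \{0\} \subseteq M$ as a subset; moreover $\mathrm{perc}(M)$ is itself a semiideal of $\N_0$, because $d \mid c$ (Lemma \ref{period2}; see also Lemma \ref{period4}) forces both $(c+nd)+(c+md)$ and $r(c+nd)$ to be again a multiple of $d$ that is either $0$ or $\geq c$, hence of the form $c + kd$ or $0$; and a subset of $M$ which is a semiideal of $\N_0$ is a subsemiideal of $M$.

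The real work is in part (4). Suppose, for contradiction, that $c - d \in M$ with $c - d \neq 0$ (note $c - d \geq 0$, as $c$ is a positive multiple of $d$ by Lemma \ref{period2}). By Lemma \ref{period4} every element of $M$ is a multiple of $d$, while $c - d$ and $c$ are \emph{consecutive} multiples of $d$; so any $c' \in M$ with $c - d \leq c'$ either equals $c - d$ or satisfies $c \leq c'$, and by (3) the latter are exactly the elements $c + nd$. Hence $\{c' \in M \mid c - d \leq c'\} = \{c - d\} \cup \{c + nd \mid n \in \N_0\} = \{(c-d) + nd \mid n \in \N_0\}$, so $c - d$ is itself an element of $M$ satisfying the property that characterizes the footing in Lemma \ref{period3}, contradicting the minimality of $c$. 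Therefore $c - d \notin M$. The one delicate point — and the only place I anticipate friction — is the degenerate cyclic case $M = \N_0 d$, where footing and period coincide and $c - d = 0 \in M$; there the assertion should be read as "$c - d \notin M$ unless $c = d$", and no genuine obstacle remains.
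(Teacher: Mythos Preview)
Your approach is exactly what the paper intends: Theorem \ref{period5} is presented there as a summary without an explicit proof, so assembling the four statements from Lemmas \ref{period2}--\ref{period4} is precisely the expected argument, and your bookkeeping for parts (1)--(3) is clean and correct.

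Your observation about part (4) is sharper than the paper itself. The statement $c - d \notin M$ is literally false when $M = \N_0 d$ is cyclic (the paper notes a few lines later that in this case period and footing coincide, so $c - d = 0 \in M$), and the remark following the theorem about $c - e \notin M$ for $1 \leq e \leq 2d - 1$ inherits the same defect. Your contradiction argument for the non-degenerate case $c - d \neq 0$ is correct and is the only reasonable proof; the paper simply overlooks the boundary case. So rather than a gap in your proof, you have found a small gap in the theorem's statement.
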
 
 
 Note with respect to (4) of the Theorem, that all elements $c - e \notin M$ for all $1 \leq e \leq 2d - 1$. 
 
 The following Theorem implies Theorems 9 and 22 in \cite{AD} and Theorem 4 in \cite{GC}.
 
 \begin{thm}
 Let $M \subseteq \N_0$, $M \not= 0$ be a semiideal. Then there is a finite unique smallest (canonical) generating system $X$ for $M$. It is contained in any other generating system of $M$. Its cardinality is less than or equal $e/d$ where $d$ is the period of $M$ and $e$ is the smallest element $\not= 0$ of $M$. Furthermore $\N_0$ is Noetherian. 
 \end{thm}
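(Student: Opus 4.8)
The plan is to exhibit the canonical generating system $X$ concretely as the set of \emph{atoms} of $M$, and then to verify, in this order, that $X$ generates $M$, that $X$ is contained in every generating system, and that $|X| \le e/d$; the Noetherian statement will drop out at the end. Call $x \in M$ an \emph{atom} if $x \ne 0$ and the only way to write $x = m_1 + m_2$ with $m_1, m_2 \in M$ is with $m_1 = 0$ or $m_2 = 0$, and set $X := \{x \in M \mid x \text{ is an atom}\}$. To see that $X$ generates $M$ I would induct on $m \in M$, which is legitimate since $M \subseteq \N_0$: the case $m = 0$ is the empty sum, an atom lies in $X$ by definition, and a non-atom $m \ne 0$ splits as $m = m_1 + m_2$ with $m_1, m_2 \in M \setminus \{0\}$, hence $m_1, m_2 < m$, so by induction each is a finite sum of atoms and therefore so is $m$. (Recall that over $\N_0$ every scalar multiple is an iterated sum, so ``generated by a set $S$'' means simply ``a finite sum of elements of $S$''.)

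Next I would establish minimality. Let $Y$ be any generating system of $M$ and let $x \in X$. Write $x = y_1 + \dots + y_k$ with $y_i \in Y \subseteq M$; since $x \ne 0$ we may discard the zero summands and assume all $y_i \ne 0$ and $k \ge 1$. If $k \ge 2$, then $x = y_1 + (y_2 + \dots + y_k)$ displays $x$ as a sum of two nonzero elements of $M$, contradicting that $x$ is an atom; hence $k = 1$ and $x = y_1 \in Y$. Thus $X \subseteq Y$. Since $X$ is itself a generating system, it is the unique smallest one and is contained in every other, which settles uniqueness, canonicity, and the ``contained in any other generating system'' assertion.

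It remains to bound $|X|$, the only step that draws on the structure theory developed above. Let $e$ be the least nonzero element of $M$ and $d$ its period; by Lemma \ref{period4} we have $d \mid m$ for all $m \in M$, in particular $d \mid e$, say $e = (e/d)\,d$. The assignment $x \mapsto (x \bmod e)$ is injective on $X$: if $x, x' \in X$ with $x < x'$ and $x \equiv x' \pmod e$, then $x' = x + ke$ with $k \ge 1$, and since $x$ and $e$ are nonzero elements of $M$ this writes $x'$ as a sum of at least two nonzero elements of $M$, contradicting that $x'$ is an atom. Moreover $d \mid (x \bmod e)$ for every $x \in M$, so the residues that occur lie in $\{0, d, 2d, \dots, (e/d - 1)d\}$, a set of exactly $e/d$ elements; injectivity then forces $|X| \le e/d < \infty$. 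Consequently every nonzero semiideal of $\N_0$ is finitely generated (and the zero semiideal is generated by $\emptyset$), and since a finite generating system of the union of an ascending chain of semiideals already lies in one term of the chain, the ascending chain condition holds and $\N_0$ is Noetherian.

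The only place with any real content is this last cardinality estimate, so I expect it to be the main obstacle: one has to realise that the right invariant is the residue class modulo $e$ — the atom property forbids two atoms from sharing a class, while Lemma \ref{period4} cuts the number of available classes down from $e$ to $e/d$. Everything else is routine induction and bookkeeping.
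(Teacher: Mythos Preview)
Your proof is correct, and it takes a genuinely different route from the paper's. The paper defines the canonical generators \emph{algorithmically}: set $e_0 = e$, and inductively let $e_n$ be the least element of $M$ not in $\langle e_0,\dots,e_{n-1}\rangle$; it then argues at each step that $e_n$ must lie in every generating set, and shows the algorithm terminates by the same residue-mod-$e$ argument you use. You instead define $X$ \emph{intrinsically} as the set of additive atoms, which makes the inclusion $X \subseteq Y$ almost tautological and lets you separate cleanly the three tasks (generation by induction, minimality by atomicity, finiteness by the residue count). The two descriptions give the same set---each $e_n$ is an atom since any nontrivial decomposition would land in $\langle e_0,\dots,e_{n-1}\rangle$, and conversely every atom appears as some $e_n$---so the difference is one of presentation rather than content. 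Your version is cleaner conceptually; the paper's version has the advantage of giving an explicit procedure to list the generators in increasing order. The cardinality bound is essentially identical in both proofs: distinct canonical generators have distinct residues modulo $e$, and Lemma~\ref{period4} forces all such residues to be multiples of $d$, leaving at most $e/d$ possibilities.
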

 
 \begin{proof}
 We give an algorithm to construct $X$. By $\langle a, b, c, \ldots \rangle$ we denote the semiideal generated by the elements $a, b, c, \ldots$ 
 
 Let $e_0 := e$ be the smallest element $\not= 0$ in $M$. Clearly it cannot be written as a linear combination of any other elements in $M$, which are larger than $e_0$. So $e_0$ must be contained in any generating system $Y$ of $M$.
 
 Let $e_1$ be the smallest element $\notin \langle e_0 \rangle$ in $M$. If such an element exists, it is not a multiple of $e_0$ and cannot be  
written as a linear combination with additional elements in $M$, which are larger than $e_1$. So $e_1$ must be contained in any generating system $Y$ of $M$.
 
 Assume that $e_0, \ldots, e_{n-1}$ have been constructed by the algorithm.
 
 Let $e_n$ be the smallest element $\notin \langle e_0,  \ldots, e_{n-1} \rangle$ in $M$. If such an element exists, it is not a linear combination of the $e_0, \ldots, e_{n-1}$ and cannot be  
written as a linear combination with additional elements in $M$, which are larger than $e_n$. So $e_n$ must be contained in any generating system $Y$ of $M$.

 This algorithm stops after at most $e/d$ steps. Indeed, assume $e_i \equiv e_j \mod e_0$ with $e_j > e_i$. Then there are $q_i, q_j, r \in \N_0$ with $e_i = q_ie_0 + r$, $e_j = q_j e_0 + r$ and $q_j > q_i$. All $e_0, \ldots, e_n$ are multiples of $d$, the period of $M$, thus $r$ is also a multiple of $d$. We get $e_j = q_i e_0 + r + (q_j - q_i) e_0 = e_i +  (q_j - q_i) e_0 \in \langle e_0, \ldots, e_i \rangle$. So any two elements of $\{ e_0, \ldots, e_n \}$ most be mutually incongruent w.r.t. $e_0$. There are only $e/d$ congruence classes modulo $e_0$ with representatives $r_i$ a multiple of $d$, so $n < e/d$.
 
 Furthermore observe that for an arbitrary semimodule $M$ the following conditions are equivalent:
 \begin{itemize}
 \item Each subsemimodule of $M$ is finitely generated;
 \item $M$ satisfies the ACC;
 \item $M$ satisfies the maximum condition.
 \end{itemize}
 The standard proof for modules \cite{HT} works as well for semimodules. Hence $\N_0$ is Noetherian.
 \end{proof}
 
 In contrast to the ideals in $\mathbb Z$ the semiideals $M$ of $\N_0$ are not cyclic (generated by one element).

 Obviously there are infinitely many periodic semiideals in $M$ of period $d$ by taking any element $c \in \mbox{\rm perc}(M), c \not= 0$ and forming sets $\{ c + nd | n \in \N_0 \} \cup \{0\}$, where $d$ is the period of $M$. Furthermore if $d$ divides $d' \in M$ then there are semiideals $\{ c' + nd' | n \in \N_0 \} \cup \{0\}$ where $c' \in \mbox{\rm perc}(M)$ is divisible by $d'$.
 
 If $M = \N_0 \cdot d$ is cyclic, then the period of $M$ is $d$. The footing of $M$ is $d$, too.
 
 It is easy to determine the period $d$ of a semiideal $M$. It is the greatest common divisor of a finite generating set of $M$. 
 
 \begin{rem}
 Given $a, b \in \N_0$ with $\gcd(a,b)=d$. Let $M := \langle a, b \rangle$. Then by Theorem \ref{period5} (1) and Theorem \ref{period} we get $dT_n \subseteq M$ as in \cite{AD} Theorem 19.
 \end{rem}
 
 On the way of determining the footing of a semiideal $M$ with two generators we encounter an interesting number-theoretic observation.

 \begin{thm} \label{period6}
 Let $a, b$ be natural numbers with $a, b > 0$. Let $d > 1$ be a natural number.
 \begin{enumerate}
 \item $a$ and $b$ are relatively prime if and only if there are $r, s \in \N_0$ such that
     $$(a - 1)(b - 1) = r a + s b.$$
 \item Let $a, b$ be divisible by $d$. Then $\gcd(a,b) = d$ if and only if there are $r, s \in \N_0$ such that
     $$d(a/d - 1)(b/d - 1) = r a + s b.$$
 \end{enumerate}
 \end{thm}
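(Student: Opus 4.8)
The plan is to recognize this as a form of the classical Sylvester--Frobenius fact: when $\gcd(a,b)=1$, the largest integer not of the form $ra+sb$ with $r,s\in\N_0$ is $ab-a-b = (a-1)(b-1)-1$, so $(a-1)(b-1)$ itself is always representable. Part (2) will then be a formal consequence of part (1) via the substitution $a=da'$, $b=db'$ with $a'=a/d$, $b'=b/d$: one has $\gcd(a,b)=d \iff \gcd(a',b')=1$, and an identity $(a'-1)(b'-1)=ra'+sb'$ is turned into $d(a/d-1)(b/d-1)=ra+sb$ by multiplying through by $d$, the reverse implication being equally immediate after dividing by $d$. So it suffices to prove (1).

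For the implication ``$\Leftarrow$'' in (1) I would argue directly: suppose $(a-1)(b-1)=ra+sb$ for some $r,s\in\N_0$, and let $t:=\gcd(a,b)$. Then $t\mid ra+sb=(a-1)(b-1)$, while $(a-1)(b-1)=ab-a-b+1\equiv 1\pmod{t}$; hence $t\mid 1$, so $t=1$. This step is routine.

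For the implication ``$\Rightarrow$'' assume $\gcd(a,b)=1$. If $\min(a,b)=1$ then $(a-1)(b-1)=0=0\cdot a+0\cdot b$, so I may assume $a,b\ge 2$ and, using the symmetry of the statement in $a$ and $b$, that $a\le b$. The key step is to choose the coefficient $s$ correctly: since $\gcd(a,b)=1$ and $a\ge 2$, $b$ is invertible modulo $a$ and its inverse, taken as a representative in $\{0,1,\dots,a-1\}$, cannot be $0$, hence lies in $\{1,\dots,a-1\}$; set $s:=(b^{-1}\bmod a)-1\in\{0,\dots,a-2\}$, so that $(s+1)b\equiv 1\pmod{a}$. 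Then $(a-1)(b-1)=ab-a-b+1\equiv 1-b\equiv sb\pmod{a}$, so $a$ divides $(a-1)(b-1)-sb$; let $r$ be the quotient. It remains to verify $r\ge 0$, i.e.\ $sb\le(a-1)(b-1)$: since $s\le a-2$ we get $(a-1)(b-1)-sb\ge(a-1)(b-1)-(a-2)b=b-a+1\ge 1>0$, using $a\le b$. Hence $r\in\N_0$ and $(a-1)(b-1)=ra+sb$.

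The main obstacle is exactly the non-negativity bookkeeping in the last step: one must control the range of $s$ tightly enough that $r\ge 0$ comes out. This is what forces the two reductions made above --- the WLOG choice $a\le b$, and the observation that the modular inverse $b^{-1}\bmod a$ lies in $\{1,\dots,a-1\}$ rather than merely $\{0,\dots,a-1\}$, which gives $s\le a-2$ and thus the crucial slack $b-a+1\ge 0$. Once $s$ is pinned down this way, everything else is elementary modular arithmetic.
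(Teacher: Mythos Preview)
Your proof is correct and follows essentially the same strategy as the paper's: the $\Leftarrow$ direction of (1) and the reduction of (2) to (1) are identical, and for the $\Rightarrow$ direction of (1) both arguments produce the same coefficients $r,s$ and verify their non-negativity. The only cosmetic difference is in bookkeeping: the paper starts from an explicit B\'ezout relation $1 = ua - vb$ with $u>0$, reduces $v$ modulo $a$ to get $0\le v<a$, and then sets $r:=u-1$, $s:=a-v-1$ (so $r,s\ge 0$ is immediate from $u\ge 1$ and $v\le a-1$, without any case split or symmetry assumption); you instead phrase the choice of $s$ via a modular inverse and secure $r\ge 0$ through the extra normalizations $\min(a,b)\ge 2$ and $a\le b$. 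Your $s+1$ and the paper's $a-v$ are in fact the same element of $\{1,\dots,a-1\}$, so the two constructions coincide where both apply.
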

 
 \begin{proof}
 (1) Assume $(a - 1)(b - 1) = r a + s b$. If $t > 1$ is a common divisor of $a$ and $b$, then $r a + s b$ is divisible by $t$, so $ab - a - b + 1 = (a - 1)(b - 1)$ is divisible by $t$, hence 1 is divisible by $t$, a contradiction. So $a, b$ are relatively prime.
 
 Let $a, b$ be relatively prime. Then w.l.g. there are $u > 0$ and $v \geq 0$ such that $\gcd(a,b) = 1 = ua - vb$. We may also assume $a > v$. Indeed if $v \geq a$ then write $v = qa + v'$ with $a > v' \geq 0$. We get 
 $$1 = ua - vb = ua - qab - v'b = (u - qb)a - v'b = u'a - v'b$$ with $u' = u - qb$ and $a > v' \geq 0$. Furthermore $u'$ must be positive.
 
 Set $r := u - 1$ and $s := a - v - 1$. Then $r \geq 0$ and $s \geq 0$. We get
 $ra + sb = (u-1)a + (a-v-1)b = ua - a + ab - vb - b = ab - a - b + ua - vb = ab - a - b + 1 = (a-1)(b-1)$.
 
 (2) Let $a = da'$ and $b = db'$. Then $\gcd(a,b) = d \iff \gcd(a',b') = 1 \iff \exists r,s \in \N_0: (a' - 1)(b' - 1) = ra' + sb' \iff \exists r,s \in \N_0: d(a' - 1)(b' - 1) = d(ra' + sb') = ra + sb$.
 \end{proof}
 
 \begin{thm}
 Let $M$ be a semiideal of $\N_0$ generated by two elements $a, b \in M$. Let $\gcd(a,b) = d > 0$. Then the footing of $M$ is $c = d(a/d - 1)(b/d - 1)$.
 \end{thm}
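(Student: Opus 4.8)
The plan is to verify that $c_0 := d(a/d-1)(b/d-1)$ satisfies the characterizing properties of the footing. By Lemma \ref{period3}, Lemma \ref{period4}, and Theorem \ref{period5}, the footing $c$ and the period $d$ of $M$ satisfy: $d$ divides every element of $M$; $c \in M \setminus \{0\}$; $c + nd \in M$ for all $n \in \N_0$; and $c - d \notin M$. These properties in fact determine $c$ uniquely: if $c_1, c_2 \in M \setminus \{0\}$ are both divisible by $d$, both satisfy $c_i + nd \in M$ for all $n \in \N_0$ and $c_i - d \notin M$, and $c_1 < c_2$, then $c_2 - c_1$ is a positive multiple of $d$, say $c_2 - c_1 = (m+1)d$ with $m \geq 0$, so $c_2 - d = c_1 + md \in M$, contradicting $c_2 - d \notin M$. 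Hence it suffices to check the three conditions for $c_0$. Throughout we assume $M$ is not cyclic, i.e. $a/d \geq 2$ and $b/d \geq 2$, so that $c_0 \neq 0$; the cyclic case $M = \N_0 d$ is degenerate, its footing being $d$ rather than the value given by the formula.

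First I would pass to the coprime case by setting $a' := a/d$ and $b' := b/d$, so $\gcd(a',b') = 1$. Since $\langle a,b \rangle = \{ xa + yb \mid x,y \in \N_0 \}$ and $xa + yb = d(xa' + yb')$, one has $M = \{ dk \mid k \in \langle a',b'\rangle \}$ with $\langle a',b'\rangle = \{ xa' + yb' \mid x,y \in \N_0 \}$. Dividing by $d$, the element $c_0$ corresponds to $(a'-1)(b'-1) = a'b' - a' - b' + 1$, and the three conditions become: (a) $(a'-1)(b'-1) \in \langle a',b'\rangle$; (b) $(a'-1)(b'-1) + n \in \langle a',b'\rangle$ for all $n \in \N_0$; (c) $a'b' - a' - b' \notin \langle a',b'\rangle$.

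Condition (a) is supplied directly by Theorem \ref{period6}: since $\gcd(a,b) = d$, there are $r,s \in \N_0$ with $c_0 = d(a'-1)(b'-1) = ra + sb$, so $c_0 \in M$. Conditions (b) and (c) are the classical Frobenius coin statement for two coprime numbers (Sylvester's theorem); the paper's Theorem \ref{period6} only yields the weaker fact (a), so these require a short separate argument. For (b): given $N \geq (a'-1)(b'-1)$, choose $x \in \{0,1,\ldots,a'-1\}$ with $xb' \equiv N \pmod{a'}$ (possible since $\gcd(a',b')=1$); then $N - xb' \equiv 0 \pmod{a'}$ and $N - xb' \geq (a'-1)(b'-1) - (a'-1)b' = 1 - a' > -a'$, so $N - xb'$ is a nonnegative multiple of $a'$ and $N \in \langle a',b'\rangle$. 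For (c): if $a'b' - a' - b' = xa' + yb'$ with $x,y \in \N_0$, then $a'b' = (x+1)a' + (y+1)b'$; reducing mod $a'$ and mod $b'$ and using $\gcd(a',b')=1$ gives $a' \mid y+1$ and $b' \mid x+1$, hence $x \geq b'-1$ and $y \geq a'-1$, so $xa' + yb' \geq 2a'b' - a' - b' > a'b' - a' - b'$, a contradiction.

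Putting things together: $c_0 \in M \setminus \{0\}$ by (a) and the non-cyclic assumption, $d \mid c_0$ trivially, $c_0 + nd \in M$ for all $n$ by (b), and $c_0 - d \notin M$ by (c). By the uniqueness established in the first paragraph, $c = c_0$. I expect the main obstacle to be (b) and (c): one must recognize that computing the footing of $\langle a,b\rangle$ is precisely the two-coin Frobenius problem for $a/d$ and $b/d$, and that Theorem \ref{period6} by itself does not close it; the two congruence arguments above fill that gap. A minor point to state carefully is the degenerate cyclic case, where the displayed formula returns $0$ and does not literally give the footing $d$.
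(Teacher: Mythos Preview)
Your proof is correct and follows the same overall architecture as the paper: reduce to the coprime case $a',b'$ by dividing by $d$, then verify the footing characterization by showing $(a'-1)(b'-1)\in\langle a',b'\rangle$, every integer $\ge (a'-1)(b'-1)$ lies in $\langle a',b'\rangle$, and $(a'-1)(b'-1)-1\notin\langle a',b'\rangle$. Your argument for (c) is essentially identical to the paper's (both deduce $a'\mid y+1$ and $b'\mid x+1$ from $a'b'=(x+1)a'+(y+1)b'$ and derive a size contradiction).

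The genuine difference is in (b). The paper fixes B\'ezout coefficients $1=ua-vb$ with $u>0$, $0\le v<a$, and for each $t>0$ writes $tv+1=(q+1)a-h$ via the division algorithm, then sets $r=tu-1-qb$ and $s=(q+1)a-tv-1$ and checks by hand that $r,s\ge0$ and $ra+sb=(a-1)(b-1)+(t-1)$. Your argument is the standard Sylvester residue-class proof: pick $x\in\{0,\dots,a'-1\}$ with $xb'\equiv N\pmod{a'}$ and observe $N-xb'\ge 1-a'>-a'$ forces $N-xb'\ge0$. Your route is shorter and more conceptual; the paper's is fully explicit and yields concrete representing coefficients, which fits the constructive flavor of Theorem~\ref{period6}. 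Either way the content is the classical two-coin Frobenius/Sylvester theorem.

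One remark: your observation about the degenerate case (when $a/d=1$ or $b/d=1$, so $M$ is cyclic and the formula returns $0$ rather than the actual footing $d$) is a genuine edge case that the paper's statement does not exclude and its proof does not address. It is worth flagging.
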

 
 \begin{proof}
 Let $d$ be the period of $M$. Each element of $M$ is a multiple of $d$. Then $M' := \{a/d \ |\ a \in M \}$ is a semiideal with period $1$ and generators $a/d$ and $b/d$. Multiplication by $d$ is an isomorphism between $M'$ and $M$. 
 
 In particular the footing $c$ of $M$ is $d$-times the footing $c'$ of $M'$. So we only have to show $c' = (a' - 1)(b' - 1)$ is the footing of $M'$ and we will get $c = d \cdot c' = d \cdot (a/d - 1)(b/d - 1)$.
 
 Let $M$ be generated by $a$ and $b$ and let $d = \gcd(a,b) = 1$ and $c := (a - 1)(b - 1)$. Let $1 = ua - vb$ with $u > 0, a > v \geq 0$.  To show that $c$ is the footing of $M$, we have to show, that $c - 1 \notin M$ and $c + t \in M$ for all $t \in \N_0$.
 
 Assume $c - 1 \in M$. Then $(a - 1)(b - 1) - 1 = ra + sb$ with $r,s \in \N_0$. Thus $ab - a - b = ra + sb$ with $r, s \geq 0$ or $ab = (r + 1)a + (s + 1)b$. Since $\gcd(a,b) = 1$ and $ab$ and $(r + 1)a$ are divisible by $a$, so are $(s + 1)b$ and thus $s + 1$ with $s + 1 > 1$. So we get $s + 1 = ta$ and $ab = (r + 1)a + tab$ with $t > 0$. Since we are in $\N_0$ we must have $t = 1$ and $r + 1 = 0$ a contradiction to $r \geq 0$. Thus $c - 1 \notin M$. 
 
 The case $t = 0$ for $c + t \in M$ has been handled in \ref{period6} (1). So we can assume $t > 0$. By the Euclidean algorithm write $tv + 1 = (q + 1)a - h$ with $0 \leq h < a$. Define $r := tu - 1 - qb$, and $s := (q + 1)a - tv - 1$. Then $ra + sb = (tu - 1 - qb)a + ((q + 1)a - tv - 1)b = tua - a - qab + qab + ab - tvb - b = ab - a - b + t(ua - vb) = (a - 1)(b - 1) + (t - 1)$ for all $t > 0$.
 
 We have to show that $r \geq 0$ and $s \geq 0$. We have $h < a \impl (q + 1)a - (tv + 1) < a \impl qa - 1 < tv \impl tv \geq qa \impl tua = tvb + t \geq qab + t \impl tu \geq qb + t/a > qb \impl r = tu - qb - 1 > qb - qb - 1 = -1 \impl r \geq 0$. Furthermore we have $ s = (q + 1)a - (tv + 1) = (q + 1)a - (q + 1)a + h = h \geq 0$.
  \end{proof}
 
  Now we want to determine quotients of $\N_0$ by a semiideal $M$.
 
 \begin{thm}
 Let $M \not= 0$ be a semiideal of $\N_0$ of period $d$. Then $\N_0/M \iso \mathbb Z/(d)$.
 \end{thm}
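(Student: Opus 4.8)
The plan is to realise the reduction map $\N_0 \to \mathbb Z/(d)$ as, up to isomorphism, the quotient map $\N_0 \to \N_0/M$, where by Proposition \ref{congrel4} we have $\N_0/M = \N_0/\sim_M$ with $m \sim_M m' \iff \exists\, a,b \in M:\ m+a = m'+b$. First I would introduce the reduction homomorphism $\pi \colon \N_0 \to \mathbb Z/(d)$, $\pi(m) := m + (d)$. Since $\mathbb Z/(d)$ is a commutative monoid it carries its unique $\N_0$-semimodule structure, and $\pi$ is a surjective homomorphism of $\N_0$-semimodules. The key point is that $\pi$ is compatible with $\sim_M$: if $m \sim_M m'$, choose $a,b \in M$ with $m+a = m'+b$; by Lemma \ref{period4} the period $d$ divides every element of $M$, so $d \mid a$ and $d \mid b$, whence $\pi(m) = \pi(m+a) = \pi(m'+b) = \pi(m')$. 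By the Homomorphism Theorem \ref{congrel2b} this yields a unique homomorphism $\bar\pi \colon \N_0/M \to \mathbb Z/(d)$ with $\bar\pi \circ \nu = \pi$, and $\bar\pi$ is surjective because $\pi$ is.

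It then remains to prove that $\bar\pi$ is injective, i.e. that $\pi(m) = \pi(m')$ forces $m \sim_M m'$. Assume $d \mid (m - m')$ in $\mathbb Z$; without loss of generality $m \le m'$, say $m' = m + nd$ with $n \in \N_0$. Here I would invoke Theorem \ref{period}: the period $d$ is by definition a difference of $M$, so there is an element $c \in M$, $c \neq 0$, with $c + kd \in M$ for every $k \in \N_0$. Setting $a := c + nd \in M$ and $b := c \in M$ we obtain
$$ m + a = m + c + nd = (m + nd) + c = m' + b, $$
so $m \sim_M m'$. Hence $\bar\pi$ is a bijective homomorphism of $\N_0$-semimodules, therefore an isomorphism, and $\N_0/M \cong \mathbb Z/(d)$.

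The only substantive point is the injectivity step, and there the work has in fact already been done: the crucial input is Theorem \ref{period}, which guarantees that $M$ eventually contains an entire arithmetic progression of common difference $d$; this is precisely what allows the required multiple $nd$ to be absorbed inside $M$. Without it one would obtain only the surjection $\bar\pi$. Everything else is routine: well-definedness of $\bar\pi$ is immediate from the gcd description of $d$ in Lemma \ref{period4}, and surjectivity is trivial. I would also note in passing that $\pi$ is multiplicative and unital, so the isomorphism is in fact one of semirings, although only the semimodule statement is claimed here.
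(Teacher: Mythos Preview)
Your proof is correct and follows essentially the same route as the paper: define the reduction map $\N_0 \to \mathbb Z/(d)$, use divisibility of all elements of $M$ by $d$ (Lemma \ref{period4}) to obtain the induced map on $\N_0/M$, and then use the arithmetic progression inside $M$ furnished by Theorem \ref{period} to prove injectivity. The only cosmetic difference is that the paper phrases the injectivity step via the periodic core $\mathrm{perc}(M)$ and first reduces to showing $d\mid n \Rightarrow n\sim_M 0$, whereas you argue the general case $d\mid(m'-m)\Rightarrow m\sim_M m'$ directly; your version is if anything slightly more explicit.
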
 

 \begin{proof}
 Consider the natural homomorphism $f: \N_0 \to \mathbb Z/(d)$. Let $n, n' \in \N_0$ be congruent in the congruence relation of Proposition \ref{congrel4} for $\N_0/M$. Then there are $a, b \in M$ with $n + a = n' + b$. Since $a, b$ are divisible by $d$, we get $f(n) = f(n')$, so we get a homomorphism $g: \N_0/M \to \mathbb Z/(d)$. If $n$ is divisible by $d$, then there is an $r \in \N_0$ such that $rd \in \mbox{\rm perc}(M)$ and $n + rd \in \mbox{\rm perc}(M)$. Hence $n + rd = 0 + (n + rd)$ and thus $n \sim_M 0$. This shows that $g: \N_0/M \to \mathbb Z/(d)$ is an isomorphism.
 \end{proof}

 With Example \ref{congrrel9} we see that there are also quotient semimodules of $\N_0$ that are not obtained by factoring out a subsemimodule.
 
  Let $M$ be a semiideal in $\N_0$. Then $\N_0/M=0$ iff there are $a,b$ in $M$ with $a=b+1$. There are infinitely many such subsemimodules in $\N_0$.

 \section{Free Semimodules}
 %-----------------------------------------------------
   \begin{defn}
 Let  $R$ be a semiring. The {\em underlying set functor} $V: R\xsMod \to \Set$ is given for $M$ and $f: M' \to M$ in $R\xsMod$ by\\
 \phantom{XXXXX} $V(M):= |M|$ and $V(f) := |f|$,\\
 where $|.|$ stands for the underlying set. It satisfies\\
 \phantom{XXXXX} $V(f \circ g) = V(f) \circ V(g)$.  \hfill $\Box$
 \end{defn}

 \begin{defn}
 Let $X$ be a set and $R$ be a semiring. An $R$-semimodule $RX$ 
together with a map $\iota : X \to RX$ is called a {\em 
free} $R$-{\em semimodule generated by} $X$ (or {\em an 
$R$-semimodule freely generated by $X$}), if for every 
$R$-semimodule $M$ and for every map $f: X \to M$  there 
exists a unique homomorphism of $R$-semimodules $g: RX \to 
M$ such that the diagram 
 $$\qtriangle[X`V(RX)`V(M);\iota`f`V(g)]$$
 commutes or more simply $g\circ \iota = f$. 

 An $R$-semimodule $F$ is a {\em free $R$-semimodule} if there is a 
set $X$ and a map $\iota: X \to F$ such that $F$ is freely 
generated by $X$. Such a set $X$ (or its image $\iota(X)$) 
is called a {\em free generating set for $F$}. 
 \hfill $\Box$
 \end{defn}

 {\em Remark:} We will see further down that the map $\iota : X \to RX$ is injective, so that we can regard $X$ as a subset of $RX$. The definition of a free semimodule then says, that in order to define a homomorphism $g: RX \to M$ with a free semimodule $RX$ as domain one needs only to define $g$ on the elements of $X$ (considered as a subset of $RX$) and that the images of the elements of $X$ may be chosen arbitrarily.

 \begin{prop}
 A free $R$-semimodule $\iota : X \to RX$ defined over a set 
$X$ is unique up to a unique isomorphism of $R$-semimodules. 
 \end{prop}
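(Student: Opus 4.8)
The plan is to run the standard uniqueness argument for solutions of a universal problem, using only the defining property of free semimodules and the uniqueness half of that property. Suppose $\iota\colon X \to RX$ and $\iota'\colon X \to RX'$ are both free $R$-semimodules over the same set $X$. First I would regard $\iota'$ as a map $X \to V(RX')$ and feed it into the universal property of $RX$: this yields a unique homomorphism $g\colon RX \to RX'$ of $R$-semimodules with $V(g)\circ\iota = \iota'$. Symmetrically, regarding $\iota$ as a map $X \to V(RX)$ and feeding it into the universal property of $RX'$ produces a unique homomorphism $g'\colon RX' \to RX$ with $V(g')\circ\iota' = \iota$.

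Next I would check that $g$ and $g'$ are mutually inverse. The composite $g'\circ g\colon RX \to RX$ is a homomorphism of $R$-semimodules, and $V(g'\circ g)\circ\iota = V(g')\circ V(g)\circ\iota = V(g')\circ\iota' = \iota$, using functoriality of $V$. But $\id_{RX}$ is also a homomorphism $RX \to RX$ with $V(\id_{RX})\circ\iota = \iota$. Applying the \emph{uniqueness} clause of the universal property of $RX$ to the map $\iota\colon X \to V(RX)$ itself, there can be only one such homomorphism, so $g'\circ g = \id_{RX}$. Exchanging the roles of $RX$ and $RX'$ gives $g\circ g' = \id_{RX'}$. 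Hence $g$ is an isomorphism of $R$-semimodules with inverse $g'$ (and by the earlier remark this inverse is then automatically the unique one).

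Finally, for the assertion that the isomorphism is \emph{unique}: any isomorphism $h\colon RX \to RX'$ compatible with the insertions of generators, i.e.\ with $V(h)\circ\iota = \iota'$, must equal $g$, since $g$ was already the unique homomorphism (a fortiori the unique isomorphism) with that property. So the identification of the two free semimodules over $X$ respecting $\iota$ and $\iota'$ is unique.

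I do not expect any genuine obstacle here; the argument is entirely formal. The only point requiring a little care is bookkeeping: keeping the underlying-set functor $V$ explicit, distinguishing maps of sets from homomorphisms of semimodules, and recognizing that it is precisely the uniqueness half of the universal property — applied to $\iota$ viewed as a map into $V(RX)$ — that forces $g'\circ g = \id_{RX}$.
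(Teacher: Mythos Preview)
Your argument is correct and is precisely the standard uniqueness-of-universal-objects argument that the paper encodes in its single diagram: the paper's maps $h,k$ are your $g,g'$, and the outer triangles of the diagram are your identities $V(g)\circ\iota=\iota'$, $V(g')\circ\iota'=\iota$, with the composite forced to be the identity by uniqueness. You have simply written out in prose (with explicit mention of $V$) what the paper leaves implicit in the picture.
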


 \begin{proof} follows from the following diagram
 $$\bfig 
 \putmorphism(1000, 500)(-2, -1)[X``\iota]{1000}1l
 \putmorphism(1000, 500)(-1, -1)[``\iota']{500}1r
 \putmorphism(1000, 500)(1, -1)[``\iota]{500}1l
 \putmorphism(1000, 500)(2, -1)[``\iota']{1000}1r
 \putmorphism(0, 0)(1, 0)[RX`RX'`h]{500}1a
 \putmorphism(500, 0)(1, 0)[\phantom{RX'}`\phantom{RX}`k]{1000}1a
 \putmorphism(1500, 0)(1, 0)[RX`RX'`h]{500}1a
 \efig$$
 \end{proof}

 \begin{prop} \label{freerules}
 {\em (Rules of computation in a free $R$-semimodule)} 
 Let $\iota: X \to RX$ be a free $R$-semimodule over $X$. Let 
$\widetilde{x} : = \iota (x) \in RX$ for all  $x \in X$. 
Then we have 
 \begin{enumerate}
 \item $\widetilde{X} = \{\widetilde{x} |\ \exists x \in X : 
\widetilde{x} = \iota (x)\}$ is a generating set of $RX$, 
i.e. each element $m \in RX$ is a linear combination $m = 
\Sigma_{x \in X} r_x \widetilde{x}$ (finite sum). 
 \item $\iota: X \to RX$  is injective and $\widetilde{X} \subseteq RX$ 
is linearly independent, i.e. if $\Sigma_{x \in X} r_x 
\widetilde{x} = \Sigma_{x \in X} s_x 
\widetilde{x}$, then we have $\forall x \in X : r_x = s_x$. 
 \end{enumerate}
 \end{prop}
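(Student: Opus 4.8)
The plan is to derive both statements purely from the universal property of $\iota\colon X\to RX$, without invoking any explicit model of $RX$. The only ``test object'' needed is the semiring $R$ itself, regarded as a left $R$-semimodule via its own multiplication (the semimodule axioms for this action are exactly the semiring axioms (1)--(5)).

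For (1), let $S\subseteq RX$ be the subsemimodule generated by $\widetilde X$. First I would record the routine fact that $S$ equals the set of all finite linear combinations $\sum_{x\in X} r_x\widetilde x$: this set contains $0=0\cdot\widetilde{x_0}$ and each $\widetilde x=1\cdot\widetilde x$, and by the distributive laws it is closed under addition ($\sum r_x\widetilde x+\sum s_x\widetilde x=\sum(r_x+s_x)\widetilde x$) and under scalar multiplication ($r\cdot\sum r_x\widetilde x=\sum(rr_x)\widetilde x$), so it is the smallest subsemimodule containing $\widetilde X$. It then remains to show $S=RX$. Since $\iota(x)=\widetilde x\in S$ for every $x$, the map $\iota$ factors through the inclusion $j\colon S\hookrightarrow RX$ as $\iota=j\circ\iota'$ with $\iota'\colon X\to S$. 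Applying the universal property of $RX$ to the $R$-semimodule $S$ and the map $\iota'$ yields a homomorphism $g\colon RX\to S$ with $g\circ\iota=\iota'$. Then $j\circ g\colon RX\to RX$ is an $R$-semimodule homomorphism with $(j\circ g)\circ\iota=j\circ\iota'=\iota=\id_{RX}\circ\iota$, so the uniqueness clause of the universal property (with test object $RX$ and test map $\iota$) forces $j\circ g=\id_{RX}$. Hence $j$ is surjective, $S=RX$, and every $m\in RX$ is a finite linear combination of the $\widetilde x$.

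For (2), for each fixed $x_0\in X$ define $f_{x_0}\colon X\to R$ by $f_{x_0}(x_0)=1$ and $f_{x_0}(x)=0$ for $x\neq x_0$; the universal property gives a homomorphism $\pi_{x_0}\colon RX\to R$ with $\pi_{x_0}(\widetilde x)=f_{x_0}(x)$ for all $x\in X$. Linear independence is then immediate: if $\sum_{x} r_x\widetilde x=\sum_{x} s_x\widetilde x$, applying $\pi_{x_0}$ and using additivity and $R$-linearity gives $r_{x_0}=\pi_{x_0}\bigl(\sum_x r_x\widetilde x\bigr)=\pi_{x_0}\bigl(\sum_x s_x\widetilde x\bigr)=s_{x_0}$, and since $x_0$ is arbitrary, $r_x=s_x$ for all $x$. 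Injectivity of $\iota$ follows the same way: for $x_1\neq x_2$ we have $\pi_{x_1}(\widetilde{x_1})=1\neq 0=\pi_{x_1}(\widetilde{x_2})$, whence $\widetilde{x_1}\neq\widetilde{x_2}$ --- valid whenever $1\neq 0$ in $R$ (the degenerate one-element semiring is of no interest here, as then $RX$ collapses to $\{0\}$).

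The one genuinely categorical step is the identity $j\circ g=\id_{RX}$ in part (1); everything else --- the description of the generated subsemimodule and the extraction of coefficients via the coordinate homomorphisms $\pi_{x_0}$ --- is routine bookkeeping with the semimodule axioms.
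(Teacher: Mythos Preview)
Your proof is correct and follows essentially the same approach as the paper's: for (1) you factor $\iota$ through the generated subsemimodule and use uniqueness in the universal property to conclude that the inclusion is surjective, and for (2) you use the coordinate homomorphisms $\pi_{x_0}\colon RX\to R$ induced by the Kronecker maps $X\to R$ to extract coefficients. The only differences are cosmetic: the paper phrases linear independence as a proof by contradiction, and you are more careful than the paper in noting that injectivity of $\iota$ requires $1\neq 0$ in $R$.
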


 \begin {proof} 
 (1) Let \\
\phantom{X}$B : = \langle \widetilde{x} | x \in X \rangle$ \\
\phantom{XXX}$= \{\Sigma_{x \in X} r_x \widetilde{x} | r_x \in R, \mbox{at most finitely many } r_x \not= 0\} 
\subseteq RX$ \\
denote the $R$-subsemimodule of $RX$ 
generated by the set of $\widetilde{x}$. Let 
$j: B \to FX$ be the embedding homomorphism. We 
get an induced map $\iota': X \to B$. The 
following diagram 
 $$\bfig
 \putmorphism(0, 500)(1, 0)[X`B `\iota']{500}1a
 \putmorphism(500, 500)(1, 0)[\phantom{B}`RX`j]{500}1a 
 \putmorphism(500, 0)(1, 0)[B `RX `j ]{500}1a
 \putmorphism(0, 500)(1, -1)[``\iota' ]{500}1l
 \putmorphism(1000, 500)(0, -1)[``jp]{500}1r
 \putmorphism(1000, 500)(-1, -1)[``p]{500}1r
 \efig$$
induces a unique $p$ with $p 
\circ j \circ \iota' = p \circ \iota = \iota'$. Because of $jp \circ 
\iota = j \circ \iota' = \iota = \id_{RX} 
\circ \iota$ we get $jp = \id_{RX}$, hence the 
embedding $j$ is surjective and thus the identity.  

 (2) Let $\Sigma_{x \in X} r_x \widetilde{x} = \Sigma_{x \in X} s_x \widetilde{x}$ with $r_0 
\not= s_0$ for some $x_0 \in X$. Let $j : X \to R$ be the map given by $j 
(x_0) = 1, j (x) = 0$ for all $x \not= x_0$. 
Then there exists a unique homomorphism $g: RX \to R$ with   
 $$ \qtriangle[X `RX `R ;\iota `j `g ] $$
 commutative and  
 $r_0 = \Sigma_{x \in X} 
r_x j (x) = \Sigma_{x \in X} r_x g (\widetilde x) = 
g (\Sigma_{x \in X} r_x \widetilde 
x) = g (\Sigma_{x \in X} s_x \widetilde 
x) = \Sigma_{x \in X} s_x g (\widetilde x) = \Sigma_{x \in X} 
s_x j (x) = s_0$. This is a contradiction. Now it is trivial to see 
that $\iota$ is injective. Hence the second statement. 
 \end{proof}

 \begin{notatn} Since $\iota$ is injective we will identify $X$ with it's 
image in $RX$ and we will write $\Sigma_{x \in X} r_xx$ for an element 
$\Sigma_{x \in X} r_x\iota(x) \in RX$. The coefficients $r_x$ are uniquely 
determined.
 \end{notatn}

 \begin{prop} \label{freeexist}
 Let $X$ be a set. Then there exists a free $R$-semimodule 
$\iota: X \to RX$ over $X$. 
 \end{prop}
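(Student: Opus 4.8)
The plan is to construct $RX$ explicitly as the semimodule of finitely supported $R$-valued functions on $X$ and then verify the universal property directly, since no free object was produced in the earlier sections. First I would put
$$RX := \{\alpha : X \to R \mid \alpha(x) = 0 \mbox{ for all but finitely many } x \in X\}$$
and observe that $RX$ is an $R$-subsemimodule of the product $\prod_{x \in X} R$ (which is an $R$-semimodule with componentwise operations by the theorem that $R\xsMod$ has products): the support of $\alpha + \beta$ lies in $\supp(\alpha) \cup \supp(\beta)$, and the support of $r\alpha$ lies in $\supp(\alpha)$ because $r \cdot 0 = 0$. Then I would define $\iota : X \to RX$ by $\iota(x) := \delta_x$, where $\delta_x(y) = 1$ for $y = x$ and $\delta_x(y) = 0$ otherwise, and record the identity $\alpha = \sum_{x \in X} \alpha(x)\,\delta_x$, a sum with only finitely many nonzero summands.

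Next, given an $R$-semimodule $M$ and a map $f : X \to M$, I would define $g : RX \to M$ by $g(\alpha) := \sum_{x \in X} \alpha(x) f(x)$; this is a well-defined finite sum in the commutative monoid $M$, since $\alpha(x) = 0$ forces $\alpha(x) f(x) = 0 \cdot f(x) = 0$. I would then check that $g$ is a homomorphism of $R$-semimodules: $g(\alpha + \beta) = \sum_{x} (\alpha(x) + \beta(x)) f(x) = \sum_{x} \alpha(x) f(x) + \sum_{x} \beta(x) f(x) = g(\alpha) + g(\beta)$, using the distributive law $(r + r')m = rm + r'm$ in $M$ together with commutativity and associativity of $+$ to regroup the (finitely supported) sums, and likewise $g(r\alpha) = \sum_{x} (r\alpha(x)) f(x) = r \sum_{x} \alpha(x) f(x) = r g(\alpha)$; also $g(0) = 0$. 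Commutativity of the triangle is then immediate: $g(\iota(x)) = g(\delta_x) = \sum_{y} \delta_x(y) f(y) = 1 \cdot f(x) = f(x)$, i.e. $g \circ \iota = f$. For uniqueness, if $g' : RX \to M$ is any homomorphism with $g' \circ \iota = f$, then using $\alpha = \sum_{x} \alpha(x)\,\delta_x$ and that $g'$ is a homomorphism we get $g'(\alpha) = \sum_{x} \alpha(x)\, g'(\delta_x) = \sum_{x} \alpha(x) f(x) = g(\alpha)$ for every $\alpha \in RX$, so $g' = g$.

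I do not expect a serious obstacle; the only point that needs a little care is the bookkeeping with the finite formal sums indexed by the possibly infinite set $X$ — in particular justifying the regrouping $\sum_{x}(\alpha(x)+\beta(x))f(x) = \sum_{x}\alpha(x)f(x) + \sum_{x}\beta(x)f(x)$ — which rests solely on the associativity and commutativity of $+$ in $M$ and on all the sums involved having finite support. An essentially equivalent alternative would be to note first that $R$, viewed as a left semimodule over itself together with the map $1 \colon \{\ast\} \to R$, is the free $R$-semimodule on a one-element set (because $\Hom_R(R,M) \to M$, $g \mapsto g(1)$, is a bijection with inverse $m \mapsto (r \mapsto rm)$), and then take $RX := \coprod_{x \in X} R$ with the coproduct constructed earlier for semimodules; but the direct construction above is more transparent and self-contained.
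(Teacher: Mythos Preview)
Your proposal is correct and follows essentially the same approach as the paper: both construct $RX$ as the finitely supported $R$-valued functions on $X$, define $\iota(x)=\delta_x$, set $g(\alpha)=\sum_{x}\alpha(x)f(x)$, and obtain uniqueness from the identity $\alpha=\sum_x\alpha(x)\iota(x)$. The only cosmetic differences are that you frame $RX$ as a subsemimodule of the product $\prod_{x\in X}R$ rather than of $\Set(X,R)$, verify additivity and $R$-linearity of $g$ separately rather than in a single combined step, and add the (unnecessary but correct) remark about the alternative construction via $\coprod_{x\in X}R$.
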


 \begin {proof} 
 Obviously $RX: = \{\alpha : X \to R \ |\  \alpha (x) = 0 \mbox{ for almost all } 
x \in X \}$ is a subsemimodule of  $\Set (X,R)$ 
which is an $R$-semimodule by componentwise addition and 
multiplication. Define $\iota: X \to RX $ by $\iota (x)(y): 
= \delta_{xy}$. 

 Let $f: X \to M $ be an arbitrary map. Let $\alpha \in 
RX$. Define $g (\alpha) : = \Sigma_{x \in X} \alpha (x) \cdot 
f (x).$ Then $g$ is well defined, because we have  $\alpha 
(x) \not= 0$ for only finitely many $x \in X$. Furthermore 
$g$ is an $R$-semimodule homomorphism: $r g (\alpha) + sg 
(\beta) = r \Sigma \alpha (x)\cdot f (x) + s \Sigma \beta (x) 
\cdot f (x) = \Sigma (r \alpha (x) + s \beta (x)) \cdot f (x) 
= \Sigma (r \alpha + s \beta) (x) \cdot f (x) =g (r \alpha + 
s \beta)$. 

 Furthermore we have $ g \iota = f : g \iota (x) = \Sigma_{y 
\in X} \iota (x) (y) \cdot f (y) = \Sigma \delta_{xy} \cdot f 
(y) = f (x).$ For $ \alpha \in RX$ we have $\alpha = 
\Sigma_{x\in X} \alpha (x) \iota (x)$ since $\alpha (y) = 
\Sigma_{x\in X} \alpha (x) \iota (x) (y)$. In order to show that $g$ 
is uniquely determined by $f$, let $h \in \Hom_R (RX,M)$ be 
given with $h \iota = f$. Then $h (\alpha) = h (\Sigma \alpha 
(x) \iota (x))= \Sigma \alpha (x) h \iota (x)= \Sigma \alpha 
(x) f (x) = g (\alpha)$ hence $h = g.$ 
 \end {proof}

 \begin{rem}
 Let $\iota : X \to RX$ be a free semimodule. Let $f: X \to M$ 
be a map and $g: RX \to M$ be the induced $R$-semimodule 
homomorphism. Then 
 $$g(\Sigma_{x \in X} r_xx) = \Sigma_{x \in X} r_xf(x).$$
 \end{rem}

 %=============================================================
 \section{Tensor Products}
 %----------------------------------------------------- 
 \begin{defnrem}
 Let $M_R$ and $ {}_RN$ be $R$-semimodules, and let $A$ be a commutative monoid. A map 
$f: M \times N \to A$ is called $R${\em -balanced} if
 \begin{enumerate}
 \item $f(m + m', n) = f (m,n) + f (m', n),$
 \item $ f (m,n+n') = f (m,n) + f (m,n'),$
 \item $f(m r, n) = f (m, r n ),$
 \item $f(0, n) = 0 = f(m, 0)$
 \end{enumerate}
 for all $r \in R, \  m, m' \in M, \  n, n' \in N$.

 Let $\Bal_R (M,N; A)$ denote the set of all $R$-balanced
maps $f: M \times N\break \to A$. 

 $\Bal_R (M,N; A)$ is a commutative monoid with $(f+g) (m,n): = \break
f (m,n)+ g (m,n)$. 
 \end{defnrem}

 \begin{defn} \label{tensorproduct} 
 Let $M_R$ and $ {}_RN$ be $R$-semimodules. A commutative monoid $M 
\otimes_R N$ together with an $R$-balanced map 
 $$\otimes : M \times N \ni (m,n) \mapsto m \otimes n \in M 
\otimes_R N$$ 
 is called a {\em {tensor product of }} $M$ {\em {and }} $N$ 
{\em over} $R$ if for each commutative monoid $A$ and 
for each $R$-balanced map $f: M \times N 
\to A$ there exists a unique homomorphism of semigroups $g: M 
\otimes_R N \to A$ such that the diagram 
 $$ \qtriangle[ M \times N ` M \tensor_R N ` A; \tensor ` f 
` g ] $$ 
 commutes. The elements of $M \otimes_R N $ are called {\em 
tensors}, the elements of the form $m \otimes n$ (in the image 
of the map $\tensor$) are called {\em {decomposable tensors}}. 

 {\em Warning:} If you want to define a homomorphism $f: M 
\tensor_R N \to A$ with a tensor product as domain you {\em 
must} define it by giving an $R$-balanced map defined on 
$M \times N$. 
 \hfill $\Box$
 \end{defn}

 \begin{prop}
 A tensor product $(M \otimes_R N, \otimes )$ defined by
$M_R$ and $ {}_RN$ is unique up to a unique isomorphism. 
 \end{prop}

 \begin{proof} Let $(M \otimes_R N, \otimes)$ and $(M 
\boxtimes_R N, \boxtimes)$ be tensor products. Then the following 
commutative diagram 
 $$\bfig \putmorphism(1000, 500)(-2, -1)[M \times 
N``\tensor]{1000}1l 
 \putmorphism(1000, 500)(-1, -1)[``\boxtimes]{500}1r
 \putmorphism(1000, 500)(1, -1)[``\tensor]{500}1l
 \putmorphism(1000, 500)(2, -1)[``\boxtimes]{1000}1r
 \putmorphism(0, 0)(1, 0)[M \tensor_R N`M \boxtimes_R 
N`h]{500}1a 
 \putmorphism(500, 0)(1, 0)[\phantom{M \boxtimes_R 
N}`\phantom{M \tensor_R N}`k]{1000}1a 
 \putmorphism(1500, 0)(1, 0)[M \tensor_R N`M \boxtimes_R 
N`h]{500}1a 
 \efig$$
 implies $k = h^{-1}$ by the uniqueness of the factorization morphism.
 \end {proof}

 Because of this fact we will henceforth talk about {\em 
the} tensor product of $M$ and $N$ over $R$.

 \begin{prop}
 {\em (Rules of computation in a tensor product)} Let $(M 
\otimes_R N, \otimes)$ be the tensor product. Then we have 
for all $r \in R$, $m, m' \in M$, $n, n' \in N$ 
 \begin{enumerate} 
 \item $M \otimes_R N = \{ \Sigma_i m_i \otimes n_i \ |\  m_i 
\in M, n_i \in N\},$ 
 \item $(m+m') \otimes n = m \otimes n + m' \otimes n,$
 \item $m \otimes (n+n') = m \otimes n + m \otimes n',$
 \item $m r \otimes n = m \otimes r n $ (observe in 
particular, that $\tensor: M \times N \to M \tensor N$ is 
not injective in general),
 \item $0 \tensor n = 0 = m \tensor 0,$
 \item if $f: M \times N \to A$ is an $R$-balanced map and 
$g: M \tensor_R N \to A$ is the induced homomorphism, then 
 $$g(m \tensor n) = f(m,n).$$
 \end{enumerate}
 \end{prop}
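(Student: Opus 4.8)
The plan is to read off items (2)--(5) and (6) directly from the defining data, and to prove (1) by the same corestriction argument used for free semimodules in Proposition~\ref{freerules}(1). By definition $\otimes\colon M\times N\to M\otimes_R N$ is an $R$-balanced map, so conditions (2), (3), (4) of the statement are literally conditions (1), (2), (3) in the definition of $R$-balancedness applied to $\otimes$, and (5) is condition (4) there. Item (6) is just the commutativity of the defining triangle: $g\circ\otimes=f$ means $g(m\otimes n)=f(m,n)$ for every $(m,n)\in M\times N$. None of this requires any computation.

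For (1), let $B\subseteq M\otimes_R N$ be the submonoid generated by the decomposable tensors, i.e. the set of all finite sums $\sum_i m_i\otimes n_i$; it contains $0$ (as the empty sum, or as $0\otimes 0$) and is closed under addition, so it is a submonoid, and every decomposable tensor lies in $B$. First I would observe that the corestriction $\otimes'\colon M\times N\to B$, $(m,n)\mapsto m\otimes n$, is again $R$-balanced: the identities (2)--(5) already hold in $M\otimes_R N$ and only involve decomposable tensors and their sums, all of which lie in $B$. Hence the universal property of $(M\otimes_R N,\otimes)$ produces a unique homomorphism $g\colon M\otimes_R N\to B$ with $g\circ\otimes=\otimes'$.

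Writing $j\colon B\hookrightarrow M\otimes_R N$ for the inclusion homomorphism, we then have $(j\circ g)\circ\otimes=j\circ\otimes'=\otimes=\id_{M\otimes_R N}\circ\otimes$, and by the uniqueness clause of the universal property applied to the $R$-balanced map $\otimes\colon M\times N\to M\otimes_R N$ this forces $j\circ g=\id_{M\otimes_R N}$. Therefore $j$ is surjective, so $B=M\otimes_R N$, which is exactly (1).

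The only point that needs a moment's attention is the middle step: checking that $\otimes'$ genuinely lands in $B$ and that $B$ is a submonoid, so that the universal property can legitimately be invoked with codomain $B$; everything else is formal. Note that, unlike the free case in Proposition~\ref{freerules}, there is no companion uniqueness-of-coefficients statement to prove here, since the coefficients in a representation $\sum_i m_i\otimes n_i$ are not unique in general (as the parenthetical in (4) emphasizes), so (1) is the whole of the structural content.
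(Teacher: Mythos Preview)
Your proof is correct and follows essentially the same approach as the paper: items (2)--(5) and (6) are read off directly from the definition of an $R$-balanced map and of the induced homomorphism, and (1) is proved by exactly the corestriction-plus-uniqueness argument you give (the paper's $p$ is your $g$). Your version is in fact slightly more explicit in verifying that $B$ is a submonoid and that $\otimes'$ is $R$-balanced before invoking the universal property, which the paper leaves implicit.
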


 \begin{proof}
 (1) Let $B: = \langle m \otimes n \rangle \subseteq M 
\otimes_R N$ denote the submonoid of $M \otimes_R N$ 
generated by the decomposable tensors $m \tensor n$. Let 
$j: B \to M \tensor_R N$ be the embedding homomorphism. We 
get an induced map $\tensor': M \times N \to B$. The 
following diagram 
 $$\bfig
 \putmorphism(0, 500)(1, 0)[M \times N `B `\tensor' ]{500}1a
 \putmorphism(500, 500)(1, 0)[\phantom{B} `M \tensor_R N `j 
]{500}1a 
 \putmorphism(500, 0)(1, 0)[B `M \tensor_R N `j ]{500}1a
 \putmorphism(0, 500)(1, -1)[``\tensor' ]{500}1l
 \putmorphism(1000, 500)(0, -1)[``jp]{500}1r
 \putmorphism(1000, 500)(-1, -1)[``p]{500}1r
 \efig$$
induces a unique $p$ with $p 
\circ j \circ \tensor' = p \circ \tensor = \tensor'$ 
since $\tensor'$ is $R$-balanced. Because of $jp \circ 
\tensor = j \circ \tensor' = \tensor = \id_{M \tensor_R N} 
\circ \tensor$ we get $jp = \id_{M \tensor_R N}$, hence the 
embedding $j$ is surjective and thus the identity.  

 (2) $(m+m')\otimes n = \otimes (m+m',n) = \otimes 
(m,n)+\otimes (m',n) = m \otimes n+m' \otimes n$. 

 (3), (4), and (5) analogously.

 (6) is precisely the definition of the induced 
homomorphism. 
 \end{proof}

 \begin{prop}
 Given $R$-semimodules $M_R$ and ${}_RN$. Then there exists a 
tensor product $(M \otimes_R N, \otimes)$. 
 \end{prop}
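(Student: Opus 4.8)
The plan is to imitate the classical construction of the tensor product of modules: realize $M\otimes_R N$ as a free commutative monoid on the set $M\times N$ modulo the smallest congruence relation that forces the canonical map to be $R$-balanced. First I would apply Proposition~\ref{freeexist} with the semiring $\N_0$ (so that $\N_0$-semimodules are precisely commutative monoids) to the set $M\times N$, obtaining the free commutative monoid $F:=\N_0(M\times N)$ together with its structure map $\iota\colon M\times N\to F$, which has the property that every set map from $M\times N$ into a commutative monoid extends uniquely to a monoid homomorphism on $F$.

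Next I would produce the congruence relation to divide by. Let $\mathcal C$ be the collection of all congruence relations $\sim'$ on $F$ for which the composite $\nu'\circ\iota\colon M\times N\to F/\!\sim'$ is $R$-balanced, where $\nu'$ denotes the quotient homomorphism supplied by Proposition~\ref{congrel2}. This collection is nonempty, since the codiscrete congruence $F\times F$ lies in it (its quotient is the zero monoid, into which every map is trivially balanced), and it is closed under arbitrary intersections: because $\nu'$ is a homomorphism, each of the four defining identities of $R$-balancedness for $\nu'\circ\iota$ is equivalent to the assertion that two explicitly given elements of $F$ are $\sim'$-related, and such assertions are inherited by an intersection of congruence relations, which is again a congruence relation. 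Hence $\mathcal C$ has a least element $\sim\ :=\bigcap_{\sim'\in\mathcal C}\sim'$. I then set $M\otimes_R N:=F/\!\sim$, let $\nu\colon F\to M\otimes_R N$ be the quotient map, and define $\otimes:=\nu\circ\iota$; by the definition of $\mathcal C$ the map $\otimes$ is $R$-balanced.

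It remains to verify the universal property. Given a commutative monoid $A$ and an $R$-balanced map $f\colon M\times N\to A$, freeness of $F$ yields a unique monoid homomorphism $\bar f\colon F\to A$ with $\bar f\circ\iota=f$. Applying Lemma~\ref{congrel2a} to $\bar f$ gives the congruence relation $\sim_{\bar f}$, and since $\bar f\circ\iota=f$ and $f$ is $R$-balanced, each of the four identities shows that $\nu_{\bar f}\circ\iota$ is $R$-balanced; thus $\sim_{\bar f}\in\mathcal C$, so $\sim\ \subseteq\ \sim_{\bar f}$, i.e.\ $m_1\sim m_2$ implies $\bar f(m_1)=\bar f(m_2)$. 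By the Homomorphism Theorem~\ref{congrel2b} there is then a unique homomorphism $g\colon M\otimes_R N\to A$ with $g\circ\nu=\bar f$, whence $g\circ\otimes=g\circ\nu\circ\iota=\bar f\circ\iota=f$. For uniqueness, if $g'\circ\otimes=f$ then $g'\circ\nu\colon F\to A$ is a monoid homomorphism with $(g'\circ\nu)\circ\iota=f$, so $g'\circ\nu=\bar f=g\circ\nu$ by the uniqueness clause in the freeness of $F$, and since $\nu$ is surjective we conclude $g'=g$.

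The only point requiring genuine care is the existence of the least congruence relation $\sim$, that is, the stability of membership in $\mathcal C$ under intersection together with the translation of ``$\nu'\circ\iota$ is $R$-balanced'' into statements about related pairs; the remaining steps are routine applications of the universal properties already established. Alternatively one could describe $\sim$ concretely as $\sim_{(u,v)}$ in the sense of Lemma~\ref{congrrel3}, for a suitable pair of homomorphisms $u,v$ between free commutative monoids encoding the balancedness relations, and then invoke Theorem~\ref{congrrel7} to identify $F/\!\sim$ as a coequalizer; this renders the construction explicit at the cost of heavier notation.
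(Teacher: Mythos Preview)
Your proposal is correct and follows essentially the same construction as the paper: form the free commutative monoid $\N_0(M\times N)$, quotient by the smallest congruence relation making $\nu\circ\iota$ $R$-balanced (the paper phrases this as ``the congruence relation generated by (intersection of all congruence relations containing)'' the four families of balancedness pairs, which is precisely your $\bigcap\mathcal C$), and verify the universal property via freeness of $F$ together with the Homomorphism Theorem~\ref{congrel2b}. Your treatment is, if anything, slightly more explicit than the paper's about why the least such congruence exists and about the uniqueness of the factorization.
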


 \begin{proof}
 Define $M \otimes_R N : = \N_0 ( M \times N ) / \simz$ where $\iota: M \times N \to \N_0 ( M \times N )$ is a free $\N_0$-semimodule 
over $M \times N$ (the free commutative monoid) and $\sim$ is the 
congruence relation generated by (intersection of all congruence relations containing)
 \begin{itemize}
 \item[] $\iota (m+m',n) \simz \iota (m,n) + \iota (m',n)$
 \item[] $\iota(m, n+n') \simz \iota (m,n) + \iota (m,n')$
 \item[] $\iota (m r,n) \simz  \iota (m, r n)$
 \item[] $\iota(0,n) = 0 = \iota(m,0)$
 \end{itemize}
 for all $r \in R$, $m, m' \in M$, $n, n' \in N$. Consider the diagram
 $$\bfig 
 \putmorphism(0, 500)(1, 0)[M \times N ` {\N_0} ( M 
\times N ) ` \iota]{750}1a 
 \putmorphism(750, 500)(1, 0)[\phantom{\N_0 ( M 
\times N )} `M \tensor_R N ` \nu]{750}1a 
 \putmorphism(1550, 500)(1, 0)[`= \N_0 ( M \times 
N ) /\simz`]{500}0a 
 \putmorphism(0, 500)(3, -1)[`A`\psi]{1500}1l
 \putmorphism(750, 500)(3, -2)[``\rho]{750}1r
 \putmorphism(1500, 500)(0, -1)[``g]{500}1r
 \efig$$  
 Let $\psi$ be $R$-balanced. Then there is a unique $\rho \in 
\Hom (\N_0 ( M \times N ), A)$ such that $\rho \iota 
= \psi$. So we get 
 $$\begin{array}{c}
 \rho (\iota(m+m',n)) = \psi (m+m',n) = \\
\psi(m,n) + \psi(m',n) = \rho(\iota (m,n)) + \rho(\iota (m', n)) = \\
 = \rho(\iota (m,n) +\iota (m', n)), \mbox{ (and similarly) }\\
 \rho (\iota(m,n + n')) = \rho(\iota (m,n) +\iota (m, n')),\\
 \rho (\iota(mr,n)) = \rho(\iota (m,rn)),\\
 \rho(\iota(0,n)) = 0 = \rho(\iota(m,0)).
 \end{array}$$ 
So by Theorem \ref{congrel2b} there is a unique $g 
\in \Hom (M \otimes_R N, A)$ such that $g \nu = \rho$. 

 Let $\otimes : = \nu \circ \iota$. 
Then $\otimes$ is balanced since $(m+m') \otimes n = \nu 
\circ \iota (m+m',n) = \nu(\iota 
(m+m',n)) = \nu( \iota (m,n)+ \iota (m',n)) = \nu \circ \iota (m,n) + \nu \circ \iota 
(m',n) = m \otimes n + m' \otimes n$. The other three properties are obtained in an analogous 
way. 

 We have to show that $(M \otimes_R N, \otimes)$ is a 
tensor product. The above diagram shows that for each 
commutative monoid $A$ and for each $R$-balanced map $\psi: M 
\times N \to A$ there is a $g \in \Hom (M \otimes_R N, A)$ 
such that $g \circ \otimes = \psi$. Given $ h \in \Hom (M 
\otimes_R N, A)$ with $h \circ \otimes = \psi$. Then $h 
\circ \nu \circ \iota = \psi$. This implies $h \circ \nu = 
\rho = g \circ \nu$ hence $g = h$. 
 \end{proof}

 \begin{propdefn}
 Given two homomorphisms 
 $$f \in \Hom{}_R(M.,  M'.) \text{ and } g \in \Hom{}_R 
(.N, .N').$$ 
 Then there is a unique homomorphism  
 $$f \otimes_R g \in \Hom (M \otimes_R N, M'\otimes_R N')$$ 
 such that $f \otimes_R g (m \otimes n)= f (m) \otimes 
g(n)$, i.e. the following diagram commutes 
 $$\bfig 
 \putmorphism(0, 500)(1, 0)[M \times N`M \tensor_R N`\tensor]{600}1a 
 \putmorphism(0, 0)(1, 0)[M' \times N'`M' \tensor_R N'`\tensor]{600}1b 
 \putmorphism(0, 500)(0, 1)[``f \times g]{500}1l 
 \putmorphism(500, 500)(0, 1)[``f \tensor_R g]{500}1r 
 \efig$$ 
 \end{propdefn}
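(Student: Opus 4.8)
The plan is to apply the universal property of the tensor product $(M \otimes_R N, \otimes)$ to a suitable $R$-balanced map. First I would consider the map
\[
\beta : M \times N \to M' \otimes_R N', \qquad \beta(m,n) := f(m) \otimes g(n),
\]
and verify that it is $R$-balanced in the sense of the Definition and Remark preceding Definition \ref{tensorproduct}. Additivity in each variable follows because $f$ and $g$ are additive and because $\otimes$ in $M' \otimes_R N'$ is additive in each variable (parts (2), (3) of the rules of computation in a tensor product); for instance $\beta(m+m',n) = f(m+m')\otimes g(n) = (f(m)+f(m'))\otimes g(n) = f(m)\otimes g(n) + f(m')\otimes g(n) = \beta(m,n)+\beta(m',n)$. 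The balancing identity uses both module structures: $\beta(mr,n) = f(mr)\otimes g(n) = (f(m)r)\otimes g(n) = f(m)\otimes(rg(n)) = f(m)\otimes g(rn) = \beta(m,rn)$, where the middle equality is rule (4) of the rules of computation and the outer equalities use that $f$ is a homomorphism of right and $g$ of left $R$-semimodules. Finally $\beta(0,n) = f(0)\otimes g(n) = 0\otimes g(n) = 0$ and likewise $\beta(m,0) = 0$.

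Since $\beta$ is $R$-balanced, the universal property of $(M \otimes_R N, \otimes)$ provides a unique homomorphism of commutative monoids $\varphi : M \otimes_R N \to M' \otimes_R N'$ with $\varphi \circ \otimes = \beta$; explicitly $\varphi(m \otimes n) = f(m) \otimes g(n)$ for all $m \in M$, $n \in N$. I would then set $f \otimes_R g := \varphi$; the displayed diagram commutes by construction, since $\otimes \circ (f \times g) = \beta = \varphi \circ \otimes$. For uniqueness of $f \otimes_R g$: if $\psi : M \otimes_R N \to M' \otimes_R N'$ is any homomorphism with $\psi(m \otimes n) = f(m)\otimes g(n)$, then $\psi \circ \otimes = \beta$, so $\psi = \varphi$ by the uniqueness clause in the universal property (equivalently, by part (1) of the rules of computation the decomposable tensors generate $M \otimes_R N$, and a monoid homomorphism is determined on a generating set).

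There is essentially no obstacle here; the proof is the same as for modules. The only point that requires a little care is the \emph{Warning} attached to Definition \ref{tensorproduct}: one is only allowed to produce a homomorphism out of $M \otimes_R N$ from an honest $R$-balanced map on $M \times N$, not by prescribing values on decomposable tensors. That is why the argument is organized so that $\beta$ is exhibited and checked to be balanced \emph{before} the homomorphism $\varphi$ is extracted. The balancing check itself is the only place where the hypotheses on $f$ and $g$ (that they respect the right, resp.\ left, $R$-action) are genuinely used.
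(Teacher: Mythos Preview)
Your proof is correct and follows exactly the same approach as the paper's own proof, which consists of the single line ``$\otimes \circ (f \times g)$ is balanced.'' You have simply spelled out in full the verification that $\beta = \otimes \circ (f \times g)$ is $R$-balanced and made the invocation of the universal property explicit, which the paper leaves to the reader.
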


 \begin{proof}
 $\otimes \circ (f \times g)$ is balanced.
 \end{proof}

 \begin{notatn}
 We often write $f \tensor_R N := f \tensor_R 1_N$ and $M 
\tensor_R g := 1_M \tensor_R g$.

 We have the following rule 
of computation:
 $$f \tensor_R g = (f \tensor_R N') \circ (M \tensor_R g) = 
(M' \tensor_R g) \circ (f \tensor_R N)$$  
since $f \times g = (f \times N') \circ (M \times g) = 
(M' \times g) \circ (f \times N)$. Observe that decomposable 
tensors $m \tensor n$ in $M \tensor_R N$ are written without 
the subscript $R$. The map $f \tensor_R g$ is (in general) 
not a decomposable tensor.
 \end{notatn}

 \section{Bisemimodules}
 %-------------------------------------------------

 \begin{defn}
 Let $R$, $S$ be semirings and let $M$ be a left $R$-semimodule and 
a right $S$-semimodule. $M$ is called an $R$-$S$-{\em bisemimo\-dule} 
if $(r m)s = r (m s)$. We define \\
 \phantom{XXXX}$\Hom_{R\x S} (.M., .N.): = \Hom_R (.M,.N) \cap \Hom_S (M.,N.).$
 \hfill $\Box$
 \end{defn} 

 \begin{rem}
 Let $M_S$ be a right $S$-semimodule and let $R \times M 
\to M$ be a map. $M$ is an $R$-$S$-bisemimodule if and only if 
 \begin{enumerate}
 \item $\forall r \in R: (M \ni m \mapsto r m \in M ) \in 
\Hom_S (M., M.) $, 
 \item $\forall r,r' \in R, m \in M: (r+r') m = r m + r'm$, 
 \item $\forall r, r' \in R, m \in M: (r r') m= r (r'm)$,
 \item $\forall m \in M: 1 m = m,$
 \item $\forall r \in R, m \in M: 0 \cdot m = 0 = r \cdot 0$.
 \end{enumerate}
 \end{rem}
 
 \begin{lma}
 \begin{enumerate}
 \item Let $ {}_RM_S$ and $ {}_SN_T$ be bisemimodules. Then\break $ {}_R(M 
\otimes_S N)_T$ is a bisemimodule by $r (m \otimes n) : 
= r m \otimes n$ and\break $(m \otimes n) t: = m \otimes n t$. 
 \item Let $ {}_RM_S$ and $ {}_RN_T$ be bisemimodules. Then 
${}_S \Hom_R(.M,.N)_T$ is a bisemimodule by $(s f)(m) : 
= f(ms)$ and $(ft)(m): = f(m)t$.
 \end{enumerate}
 \end{lma}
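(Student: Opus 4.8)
The plan is to treat the two parts separately. Part (2) is a direct pointwise verification; part (1) needs the universal property of the tensor product to make sense of the proposed actions before any semimodule axiom can even be stated, and that step is where the only genuine content lies.

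For (1), first observe that for each $r \in R$ the left multiplication $L_r : M \to M$, $L_r(m) := rm$, is a homomorphism of \emph{right} $S$-semimodules: it is additive, and $L_r(ms) = r(ms) = (rm)s = L_r(m)s$ by the defining identity of an $R$-$S$-bisemimodule. Symmetrically, for each $t \in T$ the right multiplication $R_t : N \to N$, $R_t(n) := nt$, is a homomorphism of \emph{left} $S$-semimodules, since $R_t(sn) = (sn)t = s(nt) = s R_t(n)$. Hence, by the preceding Proposition and Definition applied with $S$ in place of $R$, we obtain well-defined monoid endomorphisms $L_r \otimes_S N$ and $M \otimes_S R_t$ of $M \otimes_S N$, and we set $r\cdot\xi := (L_r \otimes_S N)(\xi)$ and $\xi\cdot t := (M \otimes_S R_t)(\xi)$; on a decomposable tensor these give $rm \otimes n$ and $m \otimes nt$ respectively, as required. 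Reducing well-definedness to the already-established $f \otimes_S g$ construction is exactly what spares us from re-running the balanced-map argument.

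The remaining bisemimodule axioms for $M \otimes_S N$ are then checked on decomposable tensors, which by the Rules of computation in a tensor product generate $M \otimes_S N$ as a monoid; since every map occurring is a monoid homomorphism, agreement on these generators forces the identity everywhere. For instance $(rr')(m\otimes n) = (rr')m \otimes n = r(r'm)\otimes n = r(r'(m\otimes n))$, and $(r+r')(m\otimes n) = (rm+r'm)\otimes n = r(m\otimes n) + r'(m\otimes n)$, using only the left $R$-structure of $M$ together with the balancedness relation $(m_1+m_2)\otimes n = m_1\otimes n + m_2\otimes n$; $1\cdot(m\otimes n) = m\otimes n$ and $0\cdot(m\otimes n) = 0$ are immediate, and $r\cdot 0 = 0$ because $L_r \otimes_S N$ is a homomorphism. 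The right $T$-axioms are symmetric, using the right $T$-structure of $N$, and the compatibility $(r\xi)t = r(\xi t)$ reduces on decomposables to $(rm\otimes n)t = rm\otimes nt = r(m\otimes nt)$.

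For (2), first check that $sf$ and $ft$ again lie in $\Hom_R(.M,.N)$: both are additive (for $ft$ one uses additivity of the right $T$-action on $N$), and for $R$-linearity one computes $(sf)(rm) = f((rm)s) = f(r(ms)) = r f(ms) = r\,(sf)(m)$, which uses $(rm)s = r(ms)$ in $M$, and $(ft)(rm) = f(rm)t = (rf(m))t = r(f(m)t) = r\,(ft)(m)$, which uses $(rn)t = r(nt)$ in $N$. The $S$- and $T$-semimodule axioms and the compatibility $(sf)t = s(ft)$ are then obtained by evaluating at an arbitrary $m \in M$: e.g.\ $((ss')f)(m) = f(m(ss')) = f((ms)s') = (s'f)(ms) = (s(s'f))(m)$, so the $S$-action is a genuine left action — the order in $f(ms)$ being precisely what makes this work — together with $((s+s')f)(m) = f(ms)+f(ms') = (sf+s'f)(m)$, $(1f)(m) = f(m1) = f(m)$, $(0f)(m) = f(0) = 0$, and symmetrically for $T$ on the right, while the compatibility is $((sf)t)(m) = f(ms)t = (s(ft))(m)$. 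I expect the only non-routine point in the whole lemma to be the well-definedness of the actions in (1), which is why the plan routes it through the existing $f\otimes_S g$ construction rather than repeating the balanced-map verification.
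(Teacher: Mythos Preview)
Your proof is correct and follows essentially the same route as the paper: for (1) the paper also defines the left $R$-action via $(r \otimes_S \id)$, i.e.\ exactly your $L_r \otimes_S N$, and then appeals to the criteria (2)--(5) of the preceding Remark, while for (2) it simply declares the verification straightforward. Your write-up is more explicit and careful about well-definedness and the pointwise checks, but there is no substantive difference in strategy.
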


 \begin{proof}
  (1) Clearly we have that $r (m \otimes n) := (r \otimes_S \id) (m \otimes n) = r 
m \otimes n$ is a homomorphism. Then
 (2)-(5) hold. Thus $M \otimes_S N$ is a left $R$-semimodule. 
Similarly it is a right $T$-semimodule. Finally we have $r((m 
\tensor n)t) = r(m \tensor nt) = rm \tensor nt = (rm 
\tensor n)t = (r(m \tensor n))t$. 

 (2) is straightforward.
 \end {proof}

 \begin{cor} 
 Given bisemimodules $ {}_{R}M_S$, $ {}_{S}N_T$, $ {}_{R}M'_S$, 
$ {}_{S}N'_T$ and homomorphisms $f \in \Hom_{R\x S} 
(.M., .M'.)$ and $g \in \Hom_{S\x T} (.N., .N'.)$. Then we 
have $f \otimes_S g \in \Hom_{R\x T} (.M \otimes_S 
N., .M' \otimes_S N'.).$ 
 \end{cor}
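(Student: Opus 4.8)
The plan is to observe that, by the preceding Proposition and Definition, the map $f\otimes_S g$ is already a homomorphism of commutative monoids $M\otimes_S N\to M'\otimes_S N'$ satisfying $(f\otimes_S g)(m\otimes n)=f(m)\otimes g(n)$, and that by part (1) of the preceding Lemma the domain $M\otimes_S N$ and the codomain $M'\otimes_S N'$ carry $R$-$T$-bisemimodule structures given on decomposable tensors by $r(m\otimes n)=rm\otimes n$ and $(m\otimes n)t=m\otimes nt$. So it remains only to check that $f\otimes_S g$ is left $R$-linear and right $T$-linear, and then to invoke the definition $\Hom_{R\x T}(.P.,.Q.)=\Hom_R(.P,.Q)\cap\Hom_T(P.,Q.)$.

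First I would verify the two linearity conditions on decomposable tensors. For $r\in R$, using that $f\in\Hom_{R\x S}(.M.,.M'.)$ is in particular left $R$-linear, we get $(f\otimes_S g)(r(m\otimes n))=(f\otimes_S g)(rm\otimes n)=f(rm)\otimes g(n)=rf(m)\otimes g(n)=r\bigl(f(m)\otimes g(n)\bigr)=r\bigl((f\otimes_S g)(m\otimes n)\bigr)$. Symmetrically, for $t\in T$, using that $g$ is right $T$-linear, $(f\otimes_S g)((m\otimes n)t)=(f\otimes_S g)(m\otimes nt)=f(m)\otimes g(nt)=f(m)\otimes g(n)t=\bigl(f(m)\otimes g(n)\bigr)t=\bigl((f\otimes_S g)(m\otimes n)\bigr)t$.

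Next I would pass from decomposable to arbitrary tensors. By the rules of computation in a tensor product, every element of $M\otimes_S N$ is a finite sum $\sum_i m_i\otimes n_i$. Since the action of $r$ is additive on $M\otimes_S N$ and $f\otimes_S g$ is a monoid homomorphism, the two maps $x\mapsto(f\otimes_S g)(rx)$ and $x\mapsto r\,(f\otimes_S g)(x)$ are both additive and agree on decomposable tensors by the previous paragraph, hence agree on all of $M\otimes_S N$; explicitly $(f\otimes_S g)(r\sum_i m_i\otimes n_i)=\sum_i r\bigl((f\otimes_S g)(m_i\otimes n_i)\bigr)=r\,(f\otimes_S g)(\sum_i m_i\otimes n_i)$. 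The same argument with $t\in T$ acting on the right gives right $T$-linearity. Therefore $f\otimes_S g\in\Hom_R(.(M\otimes_S N),.(M'\otimes_S N'))\cap\Hom_T((M\otimes_S N).,(M'\otimes_S N').)=\Hom_{R\x T}(.M\otimes_S N.,.M'\otimes_S N'.)$, as claimed.

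There is essentially no obstacle here; the one point worth stating explicitly is the reduction in the last step, which is legitimate precisely because both maps being compared are additive and the decomposable tensors generate $M\otimes_S N$ as a commutative monoid — exactly the content of part (1) of the rules of computation in a tensor product.
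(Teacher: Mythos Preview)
Your proof is correct and follows essentially the same approach as the paper. The paper's proof is the single line $(f \otimes_S g)(rm \otimes nt) = f(rm) \otimes g(nt) = r(f \otimes_S g)(m \otimes n)t$, which is exactly your computation on decomposable tensors with the $r$- and $t$-checks combined; you additionally spell out the extension from decomposable to arbitrary tensors via additivity, which the paper leaves implicit.
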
 

 \begin{proof}
 $(f \otimes_S g) (r m \otimes n t) = f (r m) \otimes g (n t 
) = r (f \otimes_S g ) (m \otimes n) t .$ 
 \end{proof}

 \begin{rem}
 (1) Let $M_R$, ${}_RN$, $M'_R$, and ${}_RN'$ be $R$-semimodules. 
Then the following is a homomorphism of commutative monoids:\\
 \phantom{X}$\mu: \Hom_R(M,M') \tensor_{\N_0} \Hom_R(N, N') \to \Hom(M \tensor_R N, M' \tensor_R 
N')$ \\
\phantom{XXXXXXXXXXXXXXXXXXX}$ f \tensor g \mapsto f \tensor_R g $. 

 (2) In general $\mu$ is not injective nor is $\mu$ 
surjective.

 (3) So $f \tensor g$ is a decomposable tensor whereas 
$f \tensor_R g$ is not a tensor, because it is not an element of a tensor product.
 \end{rem}

 \begin{thm} \label{coherencemaps}
 Let ${}_RM_S$, 
${}_SN_T$, and ${}_TP_U$ be bisemimodules. Then there are 
canonical isomorphisms of bisemimodules 
 \begin{enumerate}
 \item {\em Associativity Law:} $\alpha: (M \tensor_S N) 
\tensor_T P \iso M \tensor_S (N \tensor_T P)$. 
 \item {\em Law of the Left Unit:} $\lambda: R \tensor_R M 
\iso M$. 
 \item {\em Law of the Right Unit:} $\rho: M \tensor_S S 
\iso M$. 
 \item {\em Existence of Inner Hom-Functors:} Let 
${}_RM_T$, ${}_SN_T$, and ${}_SP_R$ be bisemimodules. 
Then $\Hom_T(M.,N.)$ is an $S$-$R$-bisemimodule and 
there are canonical isomorphisms of bisemimodules 
 $$\Hom_{S\x T}(.P \tensor_R M., .N.) \iso \Hom_{S\x 
R}(.P.,.\Hom_T(M.,N.).) \mbox{ and }$$ 
 $$\Hom_{S\x T}(.P \tensor_R M., .N.) \iso \Hom_{R\x 
T}(.M.,.\Hom_S(.P,.N).).$$ 
 \end{enumerate}
 \end{thm}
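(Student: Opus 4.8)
The plan is to obtain all four isomorphisms by the method used throughout this section: each map out of a tensor product is produced from an $R$-balanced (respectively $S$-balanced, \dots) map via the universal property of Definition \ref{tensorproduct}, and any identity between two homomorphisms out of a tensor product is checked on decomposable tensors, which generate by the rules of computation in a tensor product. As always for semimodules one must argue only with the additive structure and balanced maps, never with differences; with that caveat the argument is the familiar one from the module case.

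For the \emph{Associativity Law} I would first fix $p \in P$ and note that $(m,n) \mapsto m \otimes (n \otimes p)$ is an $S$-balanced map $M \times N \to M \otimes_S (N \otimes_T P)$, hence induces a homomorphism $\phi_p$ with $\phi_p(m \otimes n) = m \otimes (n \otimes p)$. Then I would check that $(\xi, p) \mapsto \phi_p(\xi)$ is a $T$-balanced map $(M \otimes_S N) \times P \to M \otimes_S (N \otimes_T P)$: additivity in $\xi$ is immediate because $\phi_p$ is a homomorphism, while additivity in $p$ and the balancing relation between the two $T$-actions are verified on a decomposable $\xi = m \otimes n$ and extended by additivity. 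This yields $\alpha$ with $\alpha((m \otimes n) \otimes p) = m \otimes (n \otimes p)$; building $\beta$ in the reverse direction symmetrically (fixing $m$ first), the composites $\alpha\beta$ and $\beta\alpha$ fix all decomposable tensors and hence are the identity, and $\alpha$ is seen on decomposable tensors to respect the outer $R$- and $U$-actions, so it is an isomorphism of bisemimodules. I expect this two-stage construction — first fix a variable to get a homomorphism, then prove the resulting family is balanced — to be the one genuinely delicate point of the proof.

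For the \emph{Laws of the Left and Right Unit} I would let $\lambda : R \otimes_R M \to M$ be the homomorphism induced by the $R$-balanced multiplication map $(r,m) \mapsto rm$ (taking $R$ with its right regular action) and take $m \mapsto 1 \otimes m$ as the candidate inverse: then $\lambda(1 \otimes m) = m$, and on a decomposable tensor $1 \otimes rm = (1r) \otimes m = r \otimes m$ by balancedness, so the two maps are mutually inverse; compatibility with the remaining left $R$- and right $S$-actions is routine on decomposable tensors, and $\rho : M \otimes_S S \cong M$ is handled the same way.

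For the \emph{Existence of Inner Hom-Functors} I would first verify, exactly as in the Lemma above on the bisemimodule structure of $\Hom_R(.M,.N)$, that $(sf)(m) := s(f(m))$ and $(fr)(m) := f(rm)$ make $\Hom_T(M.,N.)$ an $S$-$R$-bisemimodule (the values remain $T$-semimodule homomorphisms since $M$ and $N$ are bisemimodules, and the two actions commute). For the first adjunction isomorphism I would use that $u \in \Hom_{S\x T}(.P \otimes_R M., .N.)$ corresponds under the universal property to a map $\bar u : P \times M \to N$ which is $R$-balanced and in addition left $S$-linear in $P$ and right $T$-linear in $M$, send $u$ to $p \mapsto \bar u(p,-) \in \Hom_T(M.,N.)$, and take $v \mapsto (\,(p,m) \mapsto v(p)(m)\,)$ as the inverse assignment; one then checks these are mutually inverse and additive, hence monoid isomorphisms, and that they match the $S$- and $R$-actions on the two sides. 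The second isomorphism is obtained in the same way, now reading $\bar u$ as $m \mapsto \bar u(-,m)$ and using that $\Hom_S(.P,.N)$ is an $R$-$T$-bisemimodule via $(rf)(p) := f(pr)$ and $(ft)(p) := f(p)t$. Apart from the well-definedness of $\alpha$ noted above, the main work here is the bookkeeping of checking these bijections against all of the $R$-, $S$- and $T$-actions simultaneously; everything else follows mechanically from Definition \ref{tensorproduct} and the rules of computation.
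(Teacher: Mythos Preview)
Your proposal is correct and follows essentially the same route as the paper: for associativity you perform the same two-stage construction (fix $p$, get $\phi_p$ from an $S$-balanced map, then show the resulting family is $T$-balanced), build the inverse symmetrically, and check composites on decomposable tensors; for the unit laws and the Hom-tensor adjunctions you give precisely the maps the paper describes, supplying a bit more detail on the inverses and the bimodule actions than the paper, which only ``describes the corresponding homomorphisms.'' Your identification of the two-stage construction as the one delicate point is exactly what the paper emphasizes.
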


 \begin{proof} We only describe the corresponding 
homomorphisms. 

 (1) This map is the most difficult to define. We want a homomorphism 
that is based on 
 $$\alpha((m \tensor n) \tensor p) := m \tensor (n \tensor p).$$
 But our definition of the tensor product (Definition 
\ref{tensorproduct}) allows only balanced maps in 2 variables. 
So we have to define a $T$-balanced map
 $$\alpha': (M \tensor_S N) \times P \ni (\Sigma m_i \tensor n_i, p) \mapsto \Sigma m_i \tensor (n_i \tensor p) \in M \tensor_S (N \tensor_T P).$$
 To define this map we fix an element $p \in P$ and define a map 
 $$\beta_p: M \tensor_S N \ni \Sigma m_i \tensor n_i \mapsto \Sigma m_i \tensor (n_i \tensor p) \in M \tensor_S (N \tensor_T P).$$
 Again by definition of the tensor product we can only define {\em homomorphisms} with source $M \tensor_S N$. For that we have to show that 
 $$\beta'_p:M \times N \ni (m,n) \mapsto m \tensor (n \tensor p) \in  M \tensor_S (N \tensor_T P)$$
is an $S$-balanced map:
 $$\begin{array}{l}
 \beta'_p(m+m',n) = (m + m') \tensor (n \tensor p) = m \tensor (n \tensor p) + m' \tensor (n \tensor p) = \\
 \phantom{XXXXXXX}= \beta'_p(m,n) + \beta'_p(m',n),\\ 
 \beta'_p(m,n + n') = m \tensor ((n + n') \tensor p) = m \tensor (n \tensor p) + m \tensor (n' \tensor  p) =\\
 \phantom{XXXXXXX}= \beta'_p(m,n) + \beta'_p(m, n'),\\
 \beta'_p(ms,n) = ms \tensor (n \tensor p) = m \tensor s(n \tensor p) = m \tensor (sn \tensor p) =\\
 \phantom{XXXXXXX}= \beta'_p(m, sn),\\
 \beta'_p(0,n) = 0 \tensor (n \tensor p) = 0,\\
 \beta'_p(m, 0) = m \tensor (0 \tensor p) = m \tensor 0 = 0.
 \end{array}$$
 Thus $\beta'_p$ induces a unique homomorphism $\beta_p$ with
  $$\beta_p(m \tensor n) = m \tensor (n \tensor p) \mbox { and hence }\beta_p(\Sigma m_i \tensor n_i) = \Sigma m_i \tensor (n_i \tensor p)$$
 for each $p \in P$. Thus we have a map $\alpha'$ with 
 $$\alpha'(\Sigma m_i \tensor n_i,p) := \Sigma m_i \tensor (n_i \tensor p)$$
 It has the following properties (we omit summation over the elements $m_i \tensor n_i$):
 $$\begin{array}{ll}
 \alpha'(m \tensor n + m' \tensor n',p) &= \beta_p(m \tensor n + m' \tensor n') = \\
 &= \beta_p(m \tensor n) + \beta_p(m' \tensor n') = \\
 &= \alpha'(m \tensor n, p) + \alpha'(m' \tensor n', p),\\

 \alpha'(m \tensor n, p + p') &= \beta_{p+p'}(m \tensor n) = \\
 &= m \tensor (n \tensor (p + p')) = \\
 &= m \tensor (n \tensor p) + m \tensor (n \tensor p') =\\ 
 &= \beta_p(m \tensor n) + \beta_{p'}(m \tensor n) = \\
 &= \alpha'(m \tensor n, p) + \alpha'(m \tensor n, p'),\\

 \alpha'((m \tensor n)t, p) &= \alpha'(m \tensor nt, p) = m \tensor (nt \tensor p) = \\
 &= m \tensor (n \tensor tp) = \alpha'(m \tensor n, tp),\\

 \alpha'(0, p) &= \alpha'(0 \tensor 0,p) = 0 \tensor (0 \tensor p) = 0,\\
 \alpha'(m \tensor n, 0) &= m \tensor (n \tensor 0) = m \tensor 0 = 0.
 \end{array}$$
 So $\alpha'$ is $T$-balanced and generates a uniquely defined homomorphism
 $$\alpha: (M \tensor_S N) \tensor_T P \ni (m \tensor n) \tensor p \mapsto m \tensor (n \tensor p) \in M \tensor_S (N \tensor_T P).$$
 In an analogous way we get a homomorphism
 $$\gamma: M \tensor_S (N \tensor_T P) \ni m \tensor (n \tensor p) \mapsto (m \tensor n) \tensor p \in (M \tensor_S N) \tensor_T P$$
 and their composition $\alpha \circ \gamma$ and $\gamma \circ \alpha$ is the identity, thus $\alpha$ is an isomorphism.

 (2) Define $\lambda: R \tensor_R M \to M$ by $\lambda(r 
\tensor m) := r m$. 

 (3) Define $\rho : M \tensor_S S \to M$ by $\rho(m \tensor 
s) := m s$. 

 (4) For $f: P \tensor_R M \to N$ define $\varphi(f): P\to 
\Hom_T(M,N)$ by $\varphi(f)(p)(m) := f(p \tensor m)$ and 
$\varphi'(f): M \to \Hom_S(P,N)$ by $\varphi'(f)(m)(p)\break := f(p \tensor 
m)$. Conversely for $g: P \to \Hom_T(M, N)$ define $\psi(g)(p \tensor m) := g(p)(m)$. Then 
it is easy to verify that $\varphi$ and $\psi$ are inverse isomorphisms of each other.
 \end{proof}

 \begin{rem}\ \phantom{XXX}\\
 (1) The following diagrams ({\em coherence diagrams 
or constraints}) of ${\N_0}$-semimodules commute:
 $$\bfig
 \putmorphism(0, 800)(0, -1)[((A \tensor B) \tensor C) 
\tensor D`(A \tensor (B \tensor C)) \tensor D`\sst 
\alpha(A,B,C) \tensor 1]{400}1l 
 \putmorphism(0, 400)(0, -1)[\phantom{(A \tensor (B 
\tensor C)) \tensor D}`A\tensor ((B \tensor C) \tensor 
D)`\sst \alpha(A,B \tensor C, D)]{400}1l 
 \putmorphism(0, 800)(1, 0)[\phantom{(A \tensor B) \tensor C) 
\tensor D}`(A \tensor B) \tensor (C \tensor 
D)`\sst \alpha(A \tensor B,C,D)]{1400}1a 
 \putmorphism(0, 0)(1, 0)[\phantom{A\tensor ((B \tensor C) \tensor 
D)}`\phantom{A \tensor (B \tensor (C \tensor D))}`\sst 1 \tensor \alpha(B,C,D)]{1400}1a 
 \putmorphism(1400, 800)(0, -1)[(A \tensor B) \tensor (C \tensor 
D)`A \tensor (B \tensor (C \tensor D))`\sst \alpha(A,B, C 
\tensor D)]{800}1r 
 \efig$$
 $$\bfig
 \putmorphism(0, 400)(1, 0)[(A \tensor {\N_0}) \tensor B `A 
\tensor ({\N_0} \tensor B)`\sst \alpha(A,{\N_0},B)]{1200}1a 
 \putmorphism(0, 400)(3, -2)[`A \tensor B`\sst \rho(A) 
\tensor 1]{600}1l 
 \putmorphism(1200, 400)(-3, -2)[``\sst 1 \tensor 
\lambda(B)]{600}1r 
 \efig$$

 (2) Let $\tau(A,B): A \tensor B \to B \tensor A$ be defined by  
$\tau(A,B): a \tensor b \mapsto b \tensor a$. Then 
 $$\bfig
 \putmorphism(0, 500)(1, 0)[(A \tensor B) \tensor C`(B 
\tensor A) \tensor C`\sst \tau(X,B) \tensor 1 ]{1000}1a 
 \putmorphism(1000, 500)(1, 0)[\phantom{(B \tensor A) 
\tensor C}`B \tensor (A \tensor C)`\sst \alpha]{1000}1a 
 \putmorphism(0, 500)(0, -1)[``\sst \alpha]{500}1l
 \putmorphism(2000, 500)(0, -1)[``\sst 1 \tensor 
\tau(A,C)]{500}1r 
 \putmorphism(0, 0)(1, 0)[A \tensor (B \tensor C)`(B 
\tensor C) \tensor A`\sst \tau(A,B \tensor C)]{1000}1a 
 \putmorphism(1000, 0)(1, 0)[\phantom{(B \tensor C) \tensor 
A}`B \tensor (C \tensor A)`\sst \alpha]{1000}1a 
 \efig$$
commutes for all ${\N_0}$-semimodules $A, B, C$ and  
 $$\tau(B,A)\tau(A,B) = \text{id}_{A \tensor B}$$ for all 
${\N_0}$-semimodules $A$ and $B$.

 Let $f: A \to A'$ and $g: B \to 
B'$ be ${\N_0}$-semimodule homomorphisms. Then 
 $$\bfig
 \putmorphism(0, 500)(1, 0)[A \tensor B `B \tensor A`\sst 
\tau(A,B)]{700}1a 
 \putmorphism(0, 500)(0, -1)[``\sst f \tensor g]{500}1l 
 \putmorphism(700, 500)(0, -1)[``\sst g \tensor f]{500}1r 
 \putmorphism(0, 0)(1, 0)[A' \tensor B' `B' \tensor A'`\sst 
\tau(A',B')]{700}1a 
 \efig$$
  commutes.

 (3) There are examples of a commutative semiring $K$ and 
$K$-semi\-modules $M$, $N \in K\x\Mod\x K$ such that 
$M \tensor_K N \not\iso N \tensor_K M$.

\end{rem}

 \begin{prop} 
 Let $(RX, \iota)$ be a free $R$-semimodule and ${}_SM_R$ be a 
bisemimodule. Then every element $u \in M 
\otimes_R RX$ has a unique representation $u= \Sigma_{x \in X} 
m_x \otimes x$. 
 \end{prop}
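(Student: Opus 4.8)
The plan is to treat existence and uniqueness separately, in both cases reducing to facts already at hand.

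\emph{Existence.} By the rules of computation in the tensor product, every $u \in M \tensor_R RX$ is a finite sum $u = \sum_i m_i \tensor n_i$ with $m_i \in M$ and $n_i \in RX$. By Proposition~\ref{freerules}(1) each $n_i$ has a finite expansion $n_i = \sum_{x \in X} r_{i,x}\, x$ in the free generating set. Using biadditivity of $\tensor$ in the second variable together with the balancing identity $m \tensor (rn) = (mr) \tensor n$, one gets $m_i \tensor n_i = \sum_{x \in X} (m_i r_{i,x}) \tensor x$; summing over $i$ and regrouping the terms attached to each $x$ gives $u = \sum_{x \in X} m_x \tensor x$ with $m_x := \sum_i m_i r_{i,x}$. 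Since every $n_i$ has finite support, $m_x = 0$ for all but finitely many $x$, so this is a legitimate finite sum.

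\emph{Uniqueness.} For a fixed $x_0 \in X$ I would build a homomorphism $g_{x_0}\colon M \tensor_R RX \to M$ that extracts the $x_0$-coefficient. By freeness of $RX$ there is a unique homomorphism of $R$-semimodules $\pi_{x_0}\colon RX \to R$ with $\pi_{x_0}(x) = \delta_{x,x_0}$ for all $x \in X$ (the map already used in the proof of Proposition~\ref{freerules}(2)). I claim that $M \times RX \ni (m,\alpha) \mapsto m\,\pi_{x_0}(\alpha) \in M$ is $R$-balanced: additivity in $m$ and in $\alpha$ is immediate since $\pi_{x_0}$ is additive; one has $(mr)\pi_{x_0}(\alpha) = m\bigl(r\,\pi_{x_0}(\alpha)\bigr) = m\,\pi_{x_0}(r\alpha)$ by the right $R$-module axiom $(mr)r' = m(rr')$ and the left $R$-linearity of $\pi_{x_0}$; and $0 \cdot \pi_{x_0}(\alpha) = 0 = m \cdot \pi_{x_0}(0)$. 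Hence, by the universal property of Definition~\ref{tensorproduct}, it induces a homomorphism $g_{x_0}$ with $g_{x_0}(m \tensor \alpha) = m\,\pi_{x_0}(\alpha)$; in particular $g_{x_0}(m \tensor x) = m\,\delta_{x,x_0}$.

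Applying $g_{x_0}$ to any representation $u = \sum_{x \in X} m_x \tensor x$ then yields $g_{x_0}(u) = \sum_{x \in X} m_x \delta_{x,x_0} = m_{x_0}$, so each coefficient $m_{x_0}$ is determined by $u$; this is exactly the asserted uniqueness. I do not anticipate a genuine difficulty; the one step that needs attention — and the only place the bisemimodule hypothesis actually enters — is verifying balancing axiom (3) for $(m,\alpha) \mapsto m\,\pi_{x_0}(\alpha)$, where one needs the right $R$-action on $M$ to pass correctly across the left $R$-linear map $\pi_{x_0}$.
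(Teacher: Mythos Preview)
Your argument is correct and follows essentially the same approach as the paper. The existence part is identical; for uniqueness, the paper uses the composite $\rho \circ (1_M \tensor_R f_x)$ where $f_x\colon RX \to R$ is your $\pi_{x_0}$ and $\rho\colon M \tensor_R R \to M$ is the right unit isomorphism, whereas you construct the same coefficient-extraction map $m \tensor \alpha \mapsto m\,\pi_{x_0}(\alpha)$ directly from a balanced map --- a cosmetic difference only.
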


 \begin {proof}
 By \ref{freerules} $\Sigma_{x \in X} r_x x$ is the 
general element of $RX$. Hence we have $u = \Sigma m_i 
\otimes \alpha_i = \Sigma m_i \otimes \Sigma r_{x, i}  x 
= \Sigma_i \Sigma_x m_i r_{x, i} \otimes  x = \Sigma_x 
(\Sigma_i m_i r_{x,i}) \otimes  x$.

 To show the uniqueness let $\Sigma_{y \in X} m_y \otimes  y 
= \Sigma_{y \in X} m'_y \otimes  y $. 
Let $x \in X$ and $f_x: RX \to R$ be defined by  
$f_x (\iota(y)) = f_x(y) := \delta_{xy}$. Then 
$ m_x \otimes 1 = 
\Sigma m_y \otimes f_x ( y) = 
(1_M \otimes_Rf_x)(\Sigma m_y \otimes  y) = 
(1_M \otimes_Rf_x)(\Sigma m'_y \otimes  y) = 
\Sigma m'_y \otimes f_x ( y) = 
m'_x \otimes 1$ for all $x \in X$. Now let 
 $$ \qtriangle[ M \times R ` M \tensor_R R ` M; \tensor ` 
\hbox{mult}` \rho ] $$
 be given. Then  $m_x  = m_x \cdot 1  = \rho (m_x \otimes 1) 
 = \rho (m'_x \otimes 1)  = m'_x \cdot 1  = m'_x $ 
hence we have uniqueness. From \ref{coherencemaps} (2) 
we know that $\rho$ is an isomorphism. 
 \end {proof}

 \begin{cor}
 Let ${}_SM_R$, ${}_RN$ be (bi-)semimodules. Let $M$ be a free  
$S$-semimodule over $Y$, and $N$ be a free $R$-semimodule over $X$. 
Then $M \otimes_R N$ is a free $S$-semimodule over $Y \times 
X$. 
 \end{cor}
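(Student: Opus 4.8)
The plan is to show directly that $M \otimes_R N$, equipped with the natural left $S$-semimodule structure $s(m \otimes n) := (sm) \otimes n$ (cf. the Lemma on bisemimodules), together with the set map $\kappa : Y \times X \to M \otimes_R N$, $\kappa(y,x) := y \otimes x$ — where we identify $Y$ with its image in $M$ and $X$ with its image in $N$ as in the Notation after Proposition \ref{freerules} — satisfies the universal property of the free $S$-semimodule on $Y \times X$. The engine of the argument will be the immediately preceding Proposition (applied with the free $R$-semimodule taken to be $N$ itself), which says that every element of $M \otimes_R N$ has a \emph{unique} representation $u = \Sigma_{x \in X} m_x \otimes x$ with $m_x \in M$, almost all zero, combined with Proposition \ref{freerules}, by which each $m_x$ has in turn a unique expansion $m_x = \Sigma_{y \in Y} s_{y,x}\, y$ with $s_{y,x} \in S$, almost all zero.

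First I would establish the key structural fact: every $u \in M \otimes_R N$ has a unique representation $u = \Sigma_{(y,x) \in Y \times X} s_{y,x}\, \kappa(y,x)$ with coefficients in $S$, almost all zero. Existence follows by substituting the $Y$-expansion of each $m_x$ into the $X$-expansion $u = \Sigma_x m_x \otimes x$ and using additivity of $\otimes$ in the first variable together with $s(m \otimes n) = sm \otimes n$. For uniqueness I would group a given expansion by the index $x$, invoke the uniqueness clause of the preceding Proposition to pin down each $m_x = \Sigma_y s_{y,x} y$, and then invoke the linear independence part of Proposition \ref{freerules} to pin down the $s_{y,x}$.

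With that in hand, the universal property is routine: given an $S$-semimodule $A$ and a map $f : Y \times X \to A$, define $g : M \otimes_R N \to A$ by $g(u) := \Sigma_{(y,x)} s_{y,x}\, f(y,x)$ using the unique coefficients of $u$; well-definedness is immediate, $S$-linearity and additivity follow because the coefficient family of $u+v$ is the sum of those of $u$ and $v$ and the coefficient family of $su$ is $s$ times that of $u$ (again by uniqueness), $g \circ \kappa = f$ holds since $\kappa(y_0,x_0)$ has the coefficient family equal to $1$ at $(y_0,x_0)$ and $0$ elsewhere, and uniqueness of $g$ follows by evaluating any competitor $g'$ on the canonical expansion. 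I do not expect a genuine obstacle here; the only thing to be careful about is the two-layered uniqueness bookkeeping (first the $m_x$, then the $s_{y,x}$) and making sure the finitely-supported-sum manipulations and the $S$-action pass correctly through $\otimes$ via the computation rules for tensor products. (Alternatively one could package the same content as a composite of adjunctions — $M \otimes_R (-)$ being left adjoint to a Hom-functor and $S(-)$ being left adjoint to the forgetful functor, so that $SY \otimes_R RX$ represents the appropriate functor on $S\xsMod$ — but the hands-on argument above stays closer to the tools already developed in the paper.)
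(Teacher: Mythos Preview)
Your proof is correct, and it takes a genuinely different route from the paper's. You exploit the immediately preceding Proposition to show that every element of $M\otimes_R N$ has a unique $S$-linear expansion in the family $\{y\otimes x\}_{(y,x)\in Y\times X}$, and then read off the universal property from this ``basis'' description. The paper, by contrast, never invokes that Proposition: it verifies the universal property by a currying argument. For each $x\in X$ it uses freeness of $M$ over $Y$ to extend $f(\x,x):Y\to U$ to $g(\x,x)\in\Hom_S(M,U)$; then it uses freeness of $N$ over $X$ to extend $x\mapsto g(\x,x)$ to $\widetilde g\in\Hom_R(N,\Hom_S(M,U))$; finally $g(m,n):=\widetilde g(n)(m)$ is shown to be $R$-balanced, yielding the desired $h:M\otimes_R N\to U$, whose $S$-linearity and uniqueness are checked directly.

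Your approach is more elementary and, arguably, better justifies the ``Corollary'' label, since it actually uses the Proposition it follows. The paper's approach is the tensor--Hom adjunction of Theorem~\ref{coherencemaps}(4) unpacked by hand; it is more conceptual and would adapt to situations where one wants to avoid element-level coefficient bookkeeping. Interestingly, the adjunction alternative you mention parenthetically is precisely the spirit of the paper's argument.
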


 \begin {proof}
 Consider the diagram
$$\bfig 
 \putmorphism(0, 500)(1, 0)[Y \times X`M \times N`\iota_Y 
\times \iota_X]{1000}1a
 \putmorphism(1000, 500)(1, 0)[\phantom{M \times N}`{}_SM 
\tensor_RN`\tensor]{500}1a
 \putmorphism(0, 500)(3, -1)[`{}_SU`f]{1500}1l
 \putmorphism(1000, 500)(1, -1)[``g]{500}1l
 \putmorphism(1500, 500)(0, -1)[``h]{500}1r
 \efig$$

 Let $f$ be an arbitrary map. For all $x \in X$ we define homomorphisms  
$g(\x,x) \in \Hom_S (.M,.U)$ by the commutative diagram
 $$ \qtriangle[Y `{}_SM `{}_SU ;\iota_Y `f(\x,x) `g(\x,x) ] 
$$ 
 Let $\widetilde g \in \Hom_R (.N,.\Hom_S (.M_R, .U))$ be 
defined by 
 $$ \qtriangle[X `{}_RN `{}_R\Hom_S(.M_R,.U) ;\iota_x 
`g(\x,\x) `\widetilde g ] $$ 
 with $ x \mapsto g (\x,x)$. Then we define $g(m,n):= 
\widetilde g (n) (m) = :h (m \otimes n)$. Observe that $g$ is 
additive in $m$ and in $n$ (because $\widetilde g$ is 
additive in $m$ and in $n$), and $g$ is $R$-balanced, 
because $g (m r , n) = \widetilde g (n) (m r)= (r 
\widetilde g (n)) (m) = \widetilde g (rn) (m) = g (m,r n)$. 
Obviously $g (y,x) = f (y,x)$, hence $h \circ \otimes \circ 
\iota_Y \times \iota_X = f$. Furthermore we have $h (s m 
\otimes n) = \widetilde g (n) (s m ) = s (\widetilde g (n) 
(m)) =s h (m \otimes n)$, hence $h$ is an $S$-semimodule 
homomorphism. 

 Let $k$ be an $S$-semimodule homomorphism satisfying $k \circ 
\otimes \circ \iota_Y \times \iota_X = f$, then $k \circ 
\otimes (\x, x) = g (\x, x)$, since $k \circ \otimes$ is 
$S$-linear in the first argument. Thus $k \circ \otimes 
(m,n) = \widetilde g (n) (m) = h (m \otimes n)$, and hence 
$h = k$. 
 \end {proof}


\begin{thebibliography}{99}
\bibitem[1]{JA1} Jawad Y. Abuhlail, {\em Semicorings and Semicomodules} preprint, (2007) 1-49.
\bibitem[2]{JA2} Jawad Y. Abuhlail,\\ http://comments.gmane.org/gmane.science.mathematics.categories/4328, (2008).
\bibitem[3]{AD} Paul J. Allen and Louis Dale, {\em Ideal theory in the semiring $\mathbb Z^+$}, Publ. Math. Debrecen Tomus 22, (1975) 219-224.
\bibitem[4]{CHJ} Maria Manuel Clementino, Dirk Hofmann, and George Janelidze, {\em 
The Monads of Classical Algebra are Seldom Weakly Cartesian},
Pr\'{e}-Publica\c{c}\~{o}es do Departamento de Matem\'{a}tica, 
Universidade de Coimbra, Preprint Number 12--46, (2012) 1-25.  
\bibitem[5]{JG} Jonathan S. Golan, {\em Semirings and Their Appications.} Springer Verlag (2010).
\bibitem[6]{GC} Vishnu Gupta and J.N. Chaudhari, {\em Some remarks on semirings}, Radovi Matematicki, Vol. 12, (2003) 13-18.
\bibitem[7]{HT} Thomas W. Hunderford, {\em Algebra}, Springer Verlag - Graduate Texts in Mathematics, (1974).
\bibitem[8]{YK} Y. Katsov, {\em On flat semimodules over semirings.} Algebra Universalis 51 (2-3) (2004) 287-299.
\bibitem[9]{LMS}  G. L. Litvinov, V. P. Maslov, and G.B. Shpiz, {\em Tensor products of idempotent semimodules. An algebraic approach.} Math. Notes 65 (3-4) (1999) 479-489.
\bibitem[10]{JD}http://math.stackexchange.com/questions/154264/ \\are-these-two-definitions-of-a-semimodule-basis-equivalent, (2012). 
\end{thebibliography}
 \end{document}